\def\sp{\mathrm{sp}}
\def\unr{\mathrm{unr}}
\def\ilim{\varprojlim}
\def\SL{\mathrm{SL}}
\def\JJ{\mathscr{J}}
\def\JJg{\JJ^{\mathrm{glob}}}
\def\rank{\mathrm{rank}}
\def\Sym{\mathrm{Sym}}
\def\co{C}
\def\be{B}
\def\mmd{\mathrm{mod}}
\def\RR{\widetilde{R}}
\def\Rboxone{R^{1,\square}}
\def\Rn{R^{\natural}}
\def\Rbox{R^{\square}}
\def\Rboxn{R^{\square,\natural}}
\def\Rmod{R^{\mathrm{mod}}}
\def\Rlocmod{R^{\mathrm{mod}}}
\def\Rloc{R^{\mathrm{loc}}}
\def\expp{\kern+0.1em{p}}
\def\red{\mathrm{red}}
\def\NQ{N\kern-0.05em{Q}}
\def\T{\mathbf{T}}
\def\Tan{\T^{\mathrm{an}}}
\def\TI{\widetilde{I}}
\def\Tw{\widetilde{\T}}
\def\Tone{\T^1}
\def\Tp{\T}
\def\Done{D^1}
\def\univ{\mathrm{univ}}
\def\Runiv{R^{\univ}}
\def\Rtwuniv{\widetilde{R}^{\univ}}
\def\Rtw{R^{\dagger}_{\ell}}
\def\wm{\widetilde{\m}}
\def\MaxSpec{\mathrm{MaxSpec}}
\def\length{\mathrm{length}}
\def\univ{\mathrm{univ}}
\def\CC{\mathcal{C}}
\def\BB{\mathcal{B}}
\def\AA{\mathcal{A}}
\def\Tor{\mathrm{Tor}}
\def\m{\mathfrak{m}}
\def\T{\mathbf{T}}
\def\Z{\mathbf{Z}}
\def\OL{\mathcal{O}}
\def\Q{\mathbf{Q}}
\def\QQ{T}
\def\F{\mathbf{F}}
\def\End{\mathrm{End}}
\def\Ann{\mathrm{Ann}}
\def\GL{\mathrm{GL}}
\def\rhobar{\overline{\rho}}
\def\ad{\mathrm{ad}}
\def\aa{\mathfrak{a}}
\def\eps{\epsilon}
\def\Frob{\mathrm{Frob}}
\def\Spec{\mathrm{Spec}}
\DeclareMathOperator\Tr{tr}
\def\tr{\mathrm{Tr}}
\newtheorem{theorem}{Theorem}[section]
\newtheorem{df}[theorem]{Definition}
\newtheorem{lemma}[theorem]{Lemma}
\newtheorem{prop}[theorem]{Proposition}
\newtheorem{corr}[theorem]{Corollary}
\newtheorem{remark}[theorem]{Remark}
\def\Rtd{\widetilde{R}^{\dagger}}
\begin{document}

\title{Non-minimal modularity lifting in weight one}
\author{Frank Calegari}
\thanks{The author was supported in part by NSF  Grant
  DMS-1404620.}
  \subjclass[2010]{11F33, 11F80.}

\begin{abstract} We prove an integral~$R =\T$ theorem for odd  two
dimensional~$p$-adic representations of~$G_{\Q}$ which are unramified at~$p$,
 extending results of~\cite{CG}
to the non-minimal case. We prove, for any~$p$, the existence 
of Katz modular forms modulo~$p$ of weight one  which do not lift to
characteristic zero.
\end{abstract}

\maketitle

\section{Introduction}

The main innovation of~\cite{CG} was to develop a framework for modularity
lifting theorems in contexts in which the Taylor--Wiles method did not apply. One
of the main examples in~\cite{CG} (Theorem~1.4) was a
 minimal modularity lifting theorem for odd two-dimensional Galois representations
which are unramified at $p$. This result was simultaneously a generalization  and a
specialization of the main theorem of Buzzard--Taylor~\cite{BuzzT};  generalized because it 
related  Galois representations  modulo~$\varpi^n$ to Katz modular forms  of weight one modulo~$\varpi^n$
neither of which  need lift to characteristic zero,
and specialized because it required a minimality hypothesis at primes away
from $p$. 
One of the goals of the present paper is to provide a theorem which is a new proof of many cases 
of~\cite{BuzzT}  in the spirit of~\cite{CG}. Our methods could be viewed as hybrid of both~\cite{CG}
and~\cite{BuzzT} in the following sense: as in~\cite{CG}, we prove an integral~$R=\T$ theorem for torsion
representations by working directly  in weight one, however, as a crucial input, we use
ordinary modularity lifting theorems in higher weight (as in~\cite{BuzzT}, although we only need to work
in weight~$p$) in order to show that the patched Hecke modules see every component of the generic
fibre of the global deformation ring.
In order to simplify some of our arguments, we do not
strive for maximal generality.  The assumption that the representations are unramified at~$p$, however, seems essential for the method (if one does not use base change), in contrast to~\cite{wild}.
Let us fix a prime $p > 2$ and a local field $[E:\Q_p] < \infty$ with ring of integers $\OL$ and
residue field~$k = \OL/\varpi$.

\begin{theorem}  \label{theorem:example}
Let $p > 2$, and let
$\rho: G_{\Q} \rightarrow \GL_2(\OL)$
be a continuous odd Galois representation ramified at finitely many primes and unramified at $p$.
Suppose that $\rhobar$ is absolutely irreducible. If $\rho$ is ramified at a prime $\ell$,
assume that 
$\rho|D_{\ell}$ is reducible.
Then $\rho$ is modular of weight one.
\end{theorem}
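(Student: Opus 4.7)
The plan is to prove the theorem by a Taylor--Wiles--Kisin style patching argument in the spirit of~\cite{CG}, with weight-$p$ ordinary modularity lifting providing the new input that controls the non-minimal case. Let $S$ be a finite set of primes containing~$p$, the archimedean place, and every prime at which $\rho$ ramifies. At each $\ell \in S\setminus\{p,\infty\}$, the hypothesis that $\rho|D_{\ell}$ is reducible selects a tractable local deformation problem whose framed deformation ring is a complete intersection of the expected dimension, though now with several irreducible components (indexed roughly by which character of the reducible representation one takes as the quotient); at $p$, the local ring parametrising unramified lifts is formally smooth. One then forms the global framed deformation ring $\Rbox$ over the completed tensor product $\Rloc$ of these local framed rings, together with a Hecke algebra $\T$ acting on a suitable space of Katz weight-one modular forms at the appropriate tame level, localised at the maximal ideal cut out by~$\rhobar$.

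Next, following~\cite{CG}, I would choose an allowable sequence of Taylor--Wiles primes $Q_n$ and patch to obtain a power-series ring $S_\infty = \OL[[x_1,\ldots,x_g]]$, a patched deformation ring $R_\infty$ admitting compatible maps from $S_\infty$ and from $\Rloc$, and a patched module $M_\infty$ assembled from spaces of weight-one forms of augmented level~$Q_n$. As in~\cite{CG}, the module $M_\infty$ is not maximal Cohen--Macaulay over $R_\infty$; a depth count forces it to have codimension one, matching the fact that weight-one systems of Hecke eigenvalues arise via congruences between characteristic zero forms rather than via a top coherent cohomology group. The commutative-algebra framework of~\cite{CG}, applied to the triple $(S_\infty, R_\infty, M_\infty)$, will nevertheless yield the desired isomorphism $R = \T$, and hence the theorem, provided one establishes that $M_\infty[1/p]$ meets every irreducible component of $\Spec R_\infty[1/p]$.

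Establishing this full-support claim is the main obstacle and the essential novelty compared to the minimal setting of~\cite{CG}, where irreducibility of the generic fibre made it automatic. The plan is to attack it using weight-$p$ ordinary modularity lifting, in the style of~\cite{BuzzT}. Given a characteristic zero point $x$ of $R_\infty[1/p]$ lying on a chosen component, one deforms the associated $p$-adic Galois representation to an ordinary crystalline representation of weight~$p$ (exploiting that the residual representation is unramified at~$p$, so admits an ordinary twist of weight~$p$ parametrised by a one-dimensional family), applies a standard ordinary Taylor--Wiles--Kisin modularity lifting theorem in weight~$p$ to conclude the twisted lift is modular, and then uses a doubling / companion-form argument to descend from weight~$p$ to a congruent weight-one Katz form. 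Such a form contributes to $M_\infty$ at the component of $x$, furnishing the required full support. The resulting equality $R = \T$, evaluated at the point of $R$ cut out by $\rho$, gives the modularity of $\rho$ as a weight-one form, completing the proof.
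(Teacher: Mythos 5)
Your outline reproduces the architecture announced in the paper's introduction, but the step that actually carries the non-minimal case --- showing that the patched weight-one module is supported on every component of the patched deformation ring --- is argued by a mechanism that cannot work. You propose to take a characteristic-zero point $x$ on a given component of $\Rboxone_{\infty}[1/p]$, lift the associated Galois representation to an ordinary weight-$p$ modular form, and then ``descend'' to a weight-one form contributing to $M_{\infty}$ on the component of $x$. On the Steinberg components --- which are exactly the new components appearing at non-minimal level --- there are \emph{no} characteristic-zero weight-one modular points at all: a weight-one form Steinberg at $\ell$ would have $U_{\ell}$-eigenvalue equal to a root of unity times $\ell^{-1/2}$, which is not an algebraic integer (see the proof of Proposition~\ref{prop:katzfail}). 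So the passage from weight $p$ back to weight one is intrinsically a mod-$\varpi$ (torsion) phenomenon, and a mod-$\varpi$ Katz form cannot certify support of $M_{\infty}[1/p]$ on a component of the generic fibre. This is precisely the obstruction that forces the paper to abandon the ``produce modular lifts in characteristic zero'' strategy in favour of controlling the special fibre.

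What is actually needed, and what the paper does, is: (i) an \emph{integral} isomorphism $R_Q\simeq \Tw_{Q,\wm}$ in weight $p$ at every auxiliary level $Q$ (Theorem~\ref{theorem:minimal}) --- strictly stronger than the ``standard'' ordinary lifting theorem you invoke, and proved by patching Betti and coherent cohomology simultaneously, using $q$-expansion multiplicity one to make the coherent modules free over the full Hecke algebra; (ii) reduction mod $\varpi$ and the doubling/$\theta$-operator argument to transfer this to $R^1_Q/\varpi\simeq\Tone_{Q,\m}/\varpi$ at each finite level (Corollary~\ref{corr:nail}); (iii) patching in weight one so that $M_{\infty}/\varpi$ is free of rank one over $\Rboxone_{\infty}/\varpi$, which combined with the codimension-one presentation over $S^{\square}_{\infty}$ and a $\Tor$ argument yields that $M_{\infty}$ is free over $\Rboxone_{\infty}$. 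Note also that even granting full support of $M_{\infty}[1/p]$, the patching framework would only give near-faithfulness and hence a reduced $R=\T$ after Taylor's lemma; that happens to suffice for the characteristic-zero modularity asserted in Theorem~\ref{theorem:example} (whose proof in the paper is in fact a two-line deduction from Theorem~\ref{theorem:main}: twist $\rho$ so that it defines a point of $D^1(\OL)$ and apply $R^1\simeq\T_{\m}$), but the mechanism you propose for establishing that support does not close.
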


This result will be deduced from our main result, which is an integral
$R = \T$ theorem  which we now describe.
Let 
$$\rhobar: G_{\Q} \rightarrow \GL_2(k)$$ be a continuous absolutely irreducible
odd representation unramified at~$p$.
For each $\ell$, let $\eps$ denote the cyclotomic character. Let $\psi$ denote the Teichmuller lift
of $\det(\rhobar)$.
Let $N = S \cup P$ be a set of primes not containing~$p$ such
that  $\psi$ is ramified exactly at the primes contained in $P$ and unramified at primes contained $S$.
By abuse of notation, we also let~$N$ denote the product of the conductor of~$\psi$ with the primes in~$S$.
We consider the  functor~$D^1$ from complete local Noetherian
 $\OL$-algebras $(A,\m)$ with residue field $k$ defined (informally) as follows.
 Fix a collection of elements~$a_{\ell} \in k$ for~$\ell$ dividing~$N$. Let
 $D^1(A)$ consist of  deformations $\rho$ to $A$ together with a collection of elements $\alpha_{\ell} \in A$
 for $\ell \in N$ such that: 
\begin{enumerate}
\item $\det(\rho) = \psi$.
\item $\rho$ is unramified outside $N = S \cup P$.
\item If $\ell \in P$, then
$\rho |D_{\ell} \simeq 
 \chi^{-1}  \psi |_{D_{\ell}} \oplus \chi$ for some
 unramified character $\chi$ with $\chi(\Frob_{\ell}) = \alpha_{\ell}  \equiv  a_{\ell} \mod \m$.
\item If $\ell \in S$, then
  $\rho | D_{\ell} \simeq  \displaystyle{ \left( \begin{matrix} \chi^{-1} \psi|_{D_{\ell}} & * \\
0 & \chi \end{matrix} \right)}$
 for some
 unramified character $\chi$ with $\chi(\Frob_{\ell})  = \alpha_{\ell}
\equiv  a_{\ell} \mod \m$. 
\end{enumerate}

In fact, the actual definition of~$D^1(A)$ needs to be somewhat modified 
(see~\S\ref{section:definitions} for precise definitions),
but this description will be valid for rings of integers such as~$\OL$.
Naturally enough, we also assume  that $D^1(k)$ is non-empty, and
that $k$ is also
 large enough to contain
the eigenvalues of every element in the image of $\rhobar$.
The elements $a_{\ell} \in k$ are determined by $\rhobar$ for primes
in $P$, but not necessarily for primes in~$S$, because when~$\rhobar$
is unramified at~$\ell$, there is a choice of eigenvalue for the unramified line.
Hence $D^1$ will not strictly be a Galois deformation ring;
we refer to such rings (and we have several in this paper) as modified
deformation rings because they depend not only on~$\rhobar$ but also
on some auxiliary data.
The functor $D^1$ is representable by a complete local $\OL$-algebra $R^1$.
The ring $R^1$ comes with elements $a_{\ell} \in k$  and~$\alpha_{\ell} \in R^1$ for $\ell$ dividing~$N$. For all other
primes $\ell$, define $a_{\ell} \in k$ to be $\Tr(\rhobar(\Frob_{\ell}))$, including when
$\ell = p$.

We are now ready to state out main theorem. By abuse of notation, let $N$ denote the conductor of $\psi$ times
the primes in $S$ --- it is divisible exactly by the primes in $N = S \cup P$.
Let $X_H(N)$ denote the quotient of $X_1(N)$ by the Sylow $p$-subgroup of $(\Z/N\Z)^{\times}$. 
After enlarging~$S$ if necessary, we may assume that the curve~$X_H(N)$ is a fine moduli space.
($X_H(N)$ will be automatically a fine moduli space if~$p > 3$, see~\S\ref{section:stacks}.)

\begin{theorem} \label{theorem:main} Let $p > 2$.
Let $\T \subset \End_{\OL} H^0(X_H(N),\omega_{E/\OL})$ be the $\OL$-algebra
generated by Hecke endomorphisms.
 Let $\m$ be the maximal ideal of $\T$ generated by the elements
$\langle \ell \rangle - \psi(\ell)$ for $\ell \nmid N$ and 
$T_{\ell} - a_{\ell}$ for all $\ell$. Then there is an isomorphism
$R^1 \simeq \T_{\m}$.
\end{theorem}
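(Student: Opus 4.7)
The plan is to prove Theorem~\ref{theorem:main} by extending the Calegari--Geraghty patching framework of~\cite{CG} to the non-minimal setting, applied to coherent cohomology on the modular curve in weight one, using an ordinary modularity lifting theorem in weight~$p$ as the essential external input. The first step is the Taylor--Wiles setup: for each $n \geq 1$, I would choose a set $Q_n$ of auxiliary primes $q \nmid Np$ with $q \equiv 1 \pmod{p^n}$ at which $\rhobar(\Frob_q)$ has distinct eigenvalues and which kill the appropriate dual Selmer group. Enlarging the level to $NQ_n$, imposing the usual Hecke eigenvalue condition at each $q \in Q_n$, and localising the weight-one coherent cohomology $H^0(X_H(NQ_n),\omega_{E/\OL})$ at the corresponding maximal ideal $\m_{Q_n}$ produces a module $M_n$ carrying an action of the $p$-Sylow subgroup $\Delta_n$ of $\prod_{q \in Q_n}(\Z/q\Z)^\times$ together with a compatible action of a modified deformation ring $R^1_{Q_n}$. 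Patching these data as $n \to \infty$ in the now-standard way yields a module $M_\infty$ over a power-series ring $S_\infty = \OL[[\Delta_\infty, x_1,\ldots,x_r]]$, a patched deformation ring $R_\infty$, and a surjection $R_\infty \twoheadrightarrow R^1$ recovering the original setup modulo the augmentation ideal.

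A tangent-space computation yields $\dim R_\infty = \dim S_\infty - 1$; this defect of~$1$ is intrinsic to weight one and is precisely what prevents a direct application of the classical Taylor--Wiles method. As in~\cite{CG}, the Cohen--Macaulayness of $\omega$ on the smooth curve $X_H(NQ_n)$, combined with careful choices of Iwahori level at the Taylor--Wiles primes, forces $M_\infty$ to have depth at least $\dim R_\infty$ over $S_\infty$. The crucial new step, forced by the non-minimality, is to show that the support of $M_\infty$ meets every irreducible component of $\Spec R_\infty$; in the minimal case of~\cite{CG} this is essentially automatic because $R^1$ is (generically) irreducible, but once ramification at primes in $S$ and $P$ is allowed $R_\infty$ may have several components corresponding to the various local choices, and the patched module could a priori see only some of them.

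To hit every component~$C$ of $\Spec R_\infty[1/p]$, I would produce an algebraic point of~$C$ corresponding to a lift $\rho \colon G_\Q \to \GL_2(\OL')$ of $\rhobar$ that is ordinary crystalline of weight~$p$ at~$p$ (rather than unramified at $p$, as in weight one) and that satisfies the prescribed local conditions at all primes in $S \cup P$. Such a lift exists for purely local reasons, since the ordinary crystalline weight-$p$ local deformation ring at~$p$ is formally smooth over $\OL$ and the local deformation problems at primes away from~$p$ are independent of weight. An ordinary weight-$p$ modularity lifting theorem in the spirit of~\cite{BuzzT} then shows $\rho$ is modular of weight $p$; here the standard Taylor--Wiles machinery applies directly, as the weight-one pathology is not present in weight $p$. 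Reducing $\rho$ modulo $\varpi$ and applying the $\theta$-operator or the ordinary projection identifies its mod-$p$ system of Hecke eigenvalues with that of a Katz weight-one form visible in $M_\infty$, placing $C$ in the support of $M_\infty$ as required.

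Once $\mathrm{Supp}_{R_\infty} M_\infty = \Spec R_\infty$ is established, the depth bound combined with the commutative-algebra lemma of~\cite{CG} implies that $R_\infty$ acts faithfully on $M_\infty$ and that the action factors through a patched Hecke algebra $\T_\infty$ with $R_\infty \cong \T_\infty$. Quotienting by the augmentation ideal of $\Delta_\infty$ and the auxiliary variables $x_i$ then recovers the desired $R^1 \simeq \T_\m$. The principal obstacle throughout is the weight-$p$ step: one must carefully match local deformation conditions at~$p$ across the change of weight, and verify that the weight-$p$ ordinary modular form produced genuinely carries a mod-$p$ system of Hecke eigenvalues detected by the weight-one patched module $M_\infty$.
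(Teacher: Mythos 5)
You have correctly isolated the central difficulty --- that in the non-minimal case the patched module must be shown to see every component of the patched deformation ring, and that weight~$p$ must somehow be used --- but the mechanism you propose for this step does not work. You suggest hitting each component $C$ of $\Spec R_\infty[1/p]$ with a characteristic-zero lift that is \emph{ordinary crystalline of weight~$p$} at~$p$. Such a lift is not a point of the weight-one deformation ring at all: the points of the weight-one patched ring are by definition unramified at~$p$, whereas an ordinary weight-$p$ lift is ramified at~$p$. Worse, the components you actually need to reach are those which are Steinberg at some $\ell\in S$ while unramified at~$p$, and these carry \emph{no} classical points whatsoever: as noted in the proof of Proposition~\ref{prop:katzfail}, no weight one form in characteristic zero can be Steinberg at a finite place, since its $U_\ell$-eigenvalue would be a root of unity times $\ell^{-1/2}$, which is not an algebraic integer. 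Your fallback --- ``reducing $\rho$ modulo $\varpi$ and applying the $\theta$-operator'' --- is exactly where the real work lies and is not an argument as stated: exhibiting a single mod-$p$ eigensystem in the support does not place a characteristic-zero component $C$ in the support, nor does it supply what the patching argument actually needs.

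What the paper does instead is entirely residual. It first proves an \emph{integral} isomorphism $R_Q \simeq \Tw_{Q,\wm}$ in weight~$p$ (Theorem~\ref{theorem:minimal}); note that the ``standard Taylor--Wiles machinery'' you invoke only yields the reduced generic-fibre statement, and the integral upgrade requires patching coherent and Betti cohomology simultaneously, with multiplicity one for $q$-expansions providing freeness of the coherent module over the modified Hecke algebra. The ``doubling'' method --- Katz's identification of $\ker(\theta)$ in weight~$p$ with the image of weight one under $V$, combined with the structure of the $p$-oldforms --- then converts this into an isomorphism $R^1_Q/\varpi \simeq \Tone_{Q,\m}/\varpi$ at \emph{every} finite level $Q$ (Corollary~\ref{corr:nail}). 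Patching these mod-$\varpi$ isomorphisms shows that $M_\infty/\varpi$ is free of rank one over $\Rboxone_{\infty}/\varpi$, which replaces the component-hitting argument entirely; a Tor computation against the balanced $S^{\square}_{\infty}$-presentation of $M_\infty$ then upgrades this to freeness of $M_\infty$ over $\Rboxone_{\infty}$ integrally. Without the doubling step, or some substitute for it, your outline has no bridge from weight~$p$ back to weight one, and the theorem does not follow.
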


\subsection{Theorem~\ref{theorem:main} implies Theorem~\ref{theorem:example}}

Suppose that $\rho: G_{\Q} \rightarrow \GL_2(\OL)$ is a continuous
 Galois representation ramified at $\ell \ne p$ satisfying the conditions of
 Theorem~\ref{theorem:example}. Then after a global twist and enlarging $\OL$ if necessary
 to contain a choice of elements $a_{\ell}$ for $\ell$ dividing $N$, $\rho$ gives rise to an
 element of $D^1(\OL)$.
 The modularity of $\rho$ then follows.
 
\medskip

As an application of Theorem~\ref{theorem:main}, we prove the following:

\begin{theorem} \label{theorem:blog} Let $p$ be any prime. There exists 
a Katz modular form $f \in H^0(X(\Gamma)_{\F_p},\omega)$ for some level $\Gamma$ prime to $p$ which
does not lift to characteristic zero. 
\end{theorem}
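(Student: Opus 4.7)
The plan is to use Theorem~\ref{theorem:main} to reduce the problem to producing, for each prime $p$, a residual datum $(\rhobar, S, P, \{a_\ell\})$ for which the universal deformation ring $R^1$ has nontrivial $\varpi$-torsion. Under the identification $R^1 \simeq \T_\m$ and the (patched) Gorenstein-type properties of the Hecke module developed in~\cite{CG}, $\varpi$-torsion in $R^1$ translates into an element of $H^0(X_H(N)_{\F_p},\omega)_\m$ not in the image of reduction mod~$\varpi$ from $H^0(X_H(N)_\OL,\omega)_\m$, which is exactly a Katz weight-one eigenform that does not lift to characteristic zero.

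\textbf{Construction.} For each prime $p > 2$, begin with a classical characteristic-zero weight-one eigenform~$f_0$ of some minimal level~$N_0$ coprime to~$p$ whose residual representation~$\rhobar$ is absolutely irreducible, odd, unramified at~$p$, and satisfies the image hypotheses of Theorem~\ref{theorem:main}; dihedral forms attached to characters of suitable imaginary quadratic fields supply such~$f_0$ for every $p > 2$. Now introduce an Ihara-style auxiliary prime~$\ell \nmid N_0 p$ at which~$\rhobar$ is unramified and the Hecke polynomial of~$f_0$ at~$\ell$ has discriminant vanishing modulo~$\varpi$, i.e., $a_\ell(f_0)^2 \equiv 4\psi(\ell) \pmod \varpi$; such~$\ell$ exist by Chebotarev applied to the splitting field of~$\rhobar$ together with the congruence condition. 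Add~$\ell$ to~$S$ with the corresponding Steinberg-type local condition, and apply Theorem~\ref{theorem:main} at level $N = N_0\ell$.

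\textbf{Producing the torsion.} At level~$N$, no characteristic-zero weight-one newforms at~$\ell$ lift~$\rhobar$ (since~$\rhobar$ already arises at level~$N_0$), so the classical points of $\Spec R^1$ are the two $\ell$-oldforms derived from~$f_0$. The discriminant condition arranges that these two eigensystems coincide modulo~$\varpi$, while the non-minimal local deformation problem at~$\ell$---where $\rhobar|_{D_\ell}$ has coincident Frobenius eigenvalues mod~$\varpi$---carries a characteristic-$p$ component not visible in the generic fiber. This characteristic-$p$ contribution provides the desired $\varpi$-torsion in~$R^1$ and hence in~$\T_\m$, yielding the non-liftable Katz form via the translation above.

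\textbf{Main obstacle.} The chief technical difficulty is showing that the Ihara-style mod-$p$ coincidence genuinely creates $\varpi$-torsion in~$R^1$, as opposed to merely a congruence between otherwise $\OL$-flat eigensystems (for instance, $\OL[T]/(T(T-\varpi))$ has congruent spectra mod~$\varpi$ yet is $\OL$-flat). This requires a careful local-global comparison: one must exhibit an actual characteristic-$p$-only component of $\Spec R^1$, arising from the interaction of the new non-minimal local condition at~$\ell$ with the global Galois deformation problem, presumably by comparing $R^1$ at level $N_0\ell$ with the minimal deformation ring at level~$N_0$. Secondary obstacles include handling the case $p = 2$, which lies outside the direct scope of Theorem~\ref{theorem:main} and likely needs a separate base-change argument, and $p = 3$, where $X_H(N)$ only becomes a fine moduli space after further enlargement of~$S$ as discussed earlier in the paper.
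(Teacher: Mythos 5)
Your overall architecture matches the paper's: start from a classical dihedral weight-one eigenform, add one auxiliary prime $\ell$, invoke Theorem~\ref{theorem:main} at the augmented level, and use the fact that the mod-$p$ fibre of the deformation ring sees something that characteristic zero cannot. But the central step --- actually exhibiting the extra mod-$p$ object --- is missing, and the mechanism you propose for it does not work. You have correctly identified this as the ``main obstacle,'' but it is not a technical loose end; it is the entire content of the paper's Proposition~\ref{prop:katzfail}.

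Concretely, two things go wrong with your construction. First, the local rings $\Rmod_{\ell}$ are $\OL$-flat and reduced (Lemma~\ref{lemma:gee}), so there is no ``characteristic-$p$ component not visible in the generic fiber'' at the local level; any torsion in $R^1$ must be a global phenomenon, and a congruence $a_\ell(f_0)^2 \equiv 4\psi(\ell)$ between the two $\ell$-stabilizations cannot by itself force it (your own counterexample $\OL[T]/(T(T-\varpi))$ is exactly the generic situation). Worse, if $\ell \not\equiv 1 \bmod p$, Lemma~\ref{lemma:gee} shows that coincident residual eigenvalues force $\Rmod_\ell$ to be just the double cover of the unramified locus, so adding $\ell$ to $S$ changes the global deformation problem not at all. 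Second, your claim that there are no weight-one newforms at level $N_0\ell$ ``since $\rhobar$ already arises at level $N_0$'' is not a valid argument --- in weight $\ge 2$ level-raising produces exactly such newforms; the correct reason, used in the paper, is that a weight-one newform of level $\Gamma_0(\ell)$ at $\ell$ would be Steinberg there, forcing $U_\ell$-eigenvalue $\zeta\ell^{-1/2}$, which is not an algebraic integer.

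The paper fills the gap by a Galois-cohomological argument rather than an Ihara-style congruence: choose $\ell$ splitting completely in $F(\zeta_p)$, where $F$ contains the splitting field of $\rho$ and trivializes the dual Selmer group $H^1_{\emptyset^*}(\Q,\ad^0(\rhobar)(1))$. The Greenberg--Wiles formula then gives
$$ \frac{|H^1_{\Sigma}(\Q,\ad^0(\rhobar))|}{|H^1_{\emptyset}(\Q,\ad^0(\rhobar))|} = |\ad^0(\rhobar)(1)| > 1,$$
so there is a deformation to $k[\eps]/\eps^2$ genuinely ramified at $\ell$, which Lemma~\ref{lemma:special} shows is allowed by $\Rmod_\ell$ (here $\ell\equiv 1 \bmod p$ and $\rhobar|D_\ell$ trivial are forced by the splitting condition, not assumed as a congruence on $a_\ell$). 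This tangent vector of $R^1/\varpi \simeq \T_{\m}/\varpi$ is ramified at $\ell$, hence cannot come from any characteristic-zero weight-one form, which gives the non-surjectivity of reduction. Finally, note your construction is silent on $p=2$ beyond flagging it; the paper handles $p=2$ by citing Mestre's example, and covers all odd $p$ by running the argument for two different dihedral forms (levels $23$ and $47$).
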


The original argument of Wiles~\cite{W,TW}  for modularity theorems at non-minimal level was to use an induction
argument  and a certain numerical criterion involving complete intersections which were
finite over $\OL$. This does not seem
to be obviously generalizable to weight one --- although one still has access to forms of Ihara's Lemma,
the Hecke rings $\Tone$ are no longer complete intersections in general, and are certainly
not flat over $\OL$. It remains open as to whether one can proceed using such an argument. 
Instead, we use modularity theorems in weight~$p$ in order to show $R^1_{Q}/\varpi \simeq
\Tone_{Q,\m}/\varpi$ for various sets of auxiliary primes~$Q$, and we then use this information to show
that the patched Hecke modules in weight one are ``big enough.'' To pass between weight one and
weight~$p$ we \emph{crucially} rely on $q$-expansions.
For this reason, the methods of this paper will probably not be generalizable beyond $\GL(2)$
(although they may have implications for Hilbert modular forms of partial weight one).
Note that, in writing the paper~\cite{CG}, we tried to avoid the use of $q$-expansions as much as possible, whereas
the philosophy of this paper is quite the opposite.

\begin{remark} \emph{The methods of our paper may well be able to handle
more precise local deformation conditions than those considered above. 
However,
these assumptions considerably simplify some aspects of the arguments. We particularly shun
Diamond's vexing primes, which did indeed cause considerable vexation in~\cite{CG}.  In fact,
we try so hard to avoid them that we assume that $\rhobar |D_{\ell}$ is reducible, when certainly
some such representations --- for example those with $\rhobar | I_{\ell}$ irreducible --- may well
be amenable to our methods.}
\end{remark}

\subsection{Acknowledgements}
The debt this paper owes to~\cite{CG} is clear, and the author thanks David Geraghty for many conversations. 
We thank
Mark Kisin for the explaining a proof of
Lemma~\ref{lemma:kisin}, and we also thank Brian Conrad for a related proof of
the same result
in the context of rigid analytic geometry.
We thank Toby Gee and Patrick Allen for several useful comments. 
We also thank Gabor Wiese for the original idea of proving modularity theorems in weight~one by
working in weight~$p$.

\section{Preliminaries}
\label{section:prelim}

\subsection{Local modified deformation rings}
Let~$\rhobar: G_{\Q_{\ell}} \rightarrow \GL_2(k)$ be a representation, and let~$\Runiv_{\ell}$
denote the universal framed local deformation ring, and~$\rho^{\univ}$ the universal local deformation.
We assume in this entire section that~$p > 2$. The calculations
in this section will mostly be concerned with the case that~$\ell \ne p$.
Fix a lift of Frobenius~$\phi \in G_{\Q_{\ell}}$, and choose an eigenvalue~$a_{\ell}$
of~$\rhobar(\phi)$, which, after enlarging~$k$ if necessary, we may assume to lie in~$k$.
We define the universal modified framed local deformation ring 
$\Rtwuniv_{\ell}$ to be the localization
of the ring
$$\Runiv_{\ell}[\alpha_{\ell}]/(\alpha^2_{\ell} -  \alpha_{\ell} \mathrm{Tr}(\rho^{\univ}(\phi))
+ \det(\rho^{\univ}(\phi)))$$
at~$(\alpha_{\ell} - a_{\ell})$. The quadratic polynomial satisfied by~$\alpha_{\ell}$
is the characteristic polynomial of Frobenius.
\begin{lemma} \label{lemma:hensel} If~$\rhobar(\phi)$ has distinct eigenvalues, then $\Rtwuniv_{\ell} \simeq \Runiv_{\ell}$.
If~$\rhobar(\phi)$ does not have distinct eigenvalues, then $\Rtwuniv_{\ell}$ is a finite flat
extension of $ \Runiv_{\ell}$ of degree two.
\end{lemma}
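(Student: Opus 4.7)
The plan is to treat the ring $\widetilde{R} := \Runiv_{\ell}[\alpha_{\ell}]/(\alpha_{\ell}^2 - \alpha_{\ell}\Tr(\rho^{\univ}(\phi)) + \det(\rho^{\univ}(\phi)))$ as obtained from $\Runiv_{\ell}$ by adjoining a root of a \emph{monic} quadratic polynomial $P(X) \in \Runiv_{\ell}[X]$, so that $\widetilde{R}$ is \textit{a priori} a free $\Runiv_{\ell}$-module of rank two. The entire lemma then reduces to understanding how the localization at $(\alpha_{\ell} - a_{\ell})$ interacts with the factorization behaviour of $\overline{P}(X) \in k[X]$, and this is controlled by Hensel's lemma because $\Runiv_{\ell}$ is complete local Noetherian.

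In the distinct eigenvalues case, $\overline{P}(X) = (X - a_{\ell})(X - b_{\ell})$ with $a_{\ell} \ne b_{\ell}$ in $k$, so the two linear factors are coprime. Hensel's lemma lifts this factorization to $P(X) = (X - \widetilde{a})(X - \widetilde{b})$ over $\Runiv_{\ell}$, and the Chinese remainder theorem then gives a decomposition
\[
\widetilde{R} \;\cong\; \Runiv_{\ell}[X]/(X - \widetilde{a}) \;\times\; \Runiv_{\ell}[X]/(X - \widetilde{b}) \;\cong\; \Runiv_{\ell} \times \Runiv_{\ell}.
\]
The maximal ideal $(\alpha_{\ell} - a_{\ell})$ lies only in the first factor (since $\widetilde{b} \not\equiv a_{\ell}$ modulo the maximal ideal), so localizing at it kills the second factor and returns $\Runiv_{\ell}$.

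In the repeated eigenvalue case, $\overline{P}(X) = (X - a_{\ell})^2$, and the reduction $\widetilde{R}/\mathfrak{m}_{\Runiv_{\ell}} \widetilde{R} \cong k[X]/(X - a_{\ell})^2$ is a local Artinian ring. Since $\widetilde{R}$ is module-finite over the complete local Noetherian ring $\Runiv_{\ell}$, idempotents lift from the residue algebra and hence $\widetilde{R}$ is already local: its unique maximal ideal is the preimage of $(X - a_{\ell})$, i.e. precisely $(\alpha_{\ell} - a_{\ell}, \mathfrak{m}_{\Runiv_{\ell}})$. Localizing at this ideal does nothing, so $\Rtwuniv_{\ell} = \widetilde{R}$ is free of rank two over $\Runiv_{\ell}$, proving finite flatness of degree two.

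There is no real obstacle here — the argument is essentially a two-line application of Hensel's lemma plus the fact that idempotents lift uniquely in finite algebras over a complete local ring. The only point requiring a moment's care is verifying, in the repeated eigenvalue case, that $\widetilde{R}$ is genuinely local (rather than having several maximal ideals among which we are merely picking one), but this is immediate from locality of its reduction modulo $\mathfrak{m}_{\Runiv_{\ell}}$.
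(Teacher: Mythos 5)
Your argument is correct and is essentially the paper's own proof: in the distinct-eigenvalues case the paper likewise applies Hensel's lemma to split the characteristic polynomial over the complete local ring $\Runiv_{\ell}$, obtains $\Runiv_{\ell}\oplus\Runiv_{\ell}$, and localizes to pick out the factor with $\alpha_{\ell}\equiv a_{\ell}\bmod\m$; in the repeated-eigenvalue case it simply observes that the rank-two extension is already local. Your extra remark justifying locality via the reduction modulo $\m_{\Runiv_{\ell}}$ being Artinian local is a correct elaboration of what the paper leaves implicit.
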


\begin{proof} If~$\rhobar(\phi)$ has distinct eigenvalues, then the characteristic polynomial
of Frobenius is separable over~$k$. Since~$\Runiv_{\ell}$ is complete, the
polynomial also splits over~$\Runiv_{\ell}$ by Hensel's Lemma, and the quadratic extension above is,
(before localization), isomorphic to $\Runiv_{\ell} \oplus \Runiv_{\ell}$. Localizing
at~$(\alpha_{\ell} - a_{\ell})$ picks out the factor on which we have the 
congruence~$\alpha_{\ell} \equiv a_{\ell} \mod \m$.
If the eigenvalues of~$\rhobar(\phi)$ are both~$a_{\ell}$, then the quadratic extension is already local.
\end{proof}

A modified local deformation ring will simply be a quotient of~$\Rtwuniv_{\ell}$.
Proposition~3.1.2 of~\cite{gee} proves the
existence of  quotients~$R^{\univ,\psi,\tau}_{\ell}$
of~$\Runiv_{\ell}$ 
which are reduced, $\OL$-flat, equidimension of dimension~$4$, and such that,
for any finite extension~$F/E$, a map
$$x: R^{\univ} \rightarrow F$$
factors through~$R^{\univ,\psi,\tau}$ if and only if the 
corresponding~$F$ representation~$V_x$
has determinant~$\psi$ and is of type~$\tau$. (For this section~$\psi$
may be any unramfied character.)
For our purposes, it will suffice to
consider the trivial type~$\tau$, which corresponds to representations on which
$$\rho_x: G_{\Q_{\ell}} \rightarrow \GL(V_x) = \GL_2(F)$$
restricted to the inertial subgroup~$I_{\ell} \subset G_{\Q_{\ell}}$ has unipotent (and so
possibly trivial) image.

\begin{lemma} \label{lemma:gee}
Suppose that~$p > 2$ and~$\ell \ne p$.
 Let~$\tau$ denote the trivial type. There exists a quotient
$R^{\mmd,\psi}_{\ell} := \RR^{\univ,\psi,\tau}_{\ell}$ of~$\Rtwuniv_{\ell}$
which is reduced, $\OL$-flat, equidimensional of dimension~$4$,
and such that, for any finite extension~$F/E$, a map
$$x: \Rtwuniv_{\ell} \rightarrow F$$
factors through~$R^{\mmd,\psi}_{\ell}$ if and only if the 
corresponding~$F$ representation~$V_x$ has determinant~$\psi$,
is ordinary, and has an unramified quotient on which the action of~$\Frob_{\ell}$
is by the image of~$\alpha_{\ell}$.
\end{lemma}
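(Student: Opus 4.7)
The plan is to construct $R^{\mmd,\psi}_{\ell}$ as the base change of the quotient $R^{\univ,\psi,\tau}_{\ell}$ supplied by Proposition~3.1.2 of~\cite{gee} along $\Runiv_{\ell} \to \Rtwuniv_{\ell}$, i.e.\ to set
$$R^{\mmd,\psi}_{\ell} := R^{\univ,\psi,\tau}_{\ell}\otimes_{\Runiv_{\ell}} \Rtwuniv_{\ell}.$$
Unwinding the definition of $\Rtwuniv_{\ell}$, this is the localization at $(\alpha_{\ell}-a_{\ell})$ of the finite degree-two extension
$$R^{\univ,\psi,\tau}_{\ell}[\alpha_{\ell}]/(\alpha_{\ell}^2-\alpha_{\ell}\Tr(\rho^{\univ}(\phi))+\det(\rho^{\univ}(\phi))),$$
and by construction is tautologically a quotient of $\Rtwuniv_{\ell}$.

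The three ring-theoretic conditions would then be checked by the case division of Lemma~\ref{lemma:hensel}. If $\rhobar(\phi)$ has distinct eigenvalues, Hensel's lemma splits the quadratic relation \'etale-ly and one obtains $R^{\mmd,\psi}_{\ell} \cong R^{\univ,\psi,\tau}_{\ell}$, so reducedness, $\OL$-flatness and equidimensionality of dimension $4$ are inherited directly from~\cite{gee}. If $\rhobar(\phi)$ has a repeated eigenvalue $a_{\ell}$, then $R^{\mmd,\psi}_{\ell}$ is a genuine finite flat extension of $R^{\univ,\psi,\tau}_{\ell}$ of degree two; finite flatness formally preserves $\OL$-flatness and equidimensionality of dimension~$4$, so the only real content is reducedness. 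This reduces to showing that the discriminant $\Tr(\rho^{\univ}(\phi))^2 - 4\det(\rho^{\univ}(\phi))$ is a non-zero-divisor in the reduced ring $R^{\univ,\psi,\tau}_{\ell}$, i.e.\ does not vanish identically on any irreducible component. I would read this off the explicit description of those components in~\cite{gee}: on the unramified/principal-series component a generic Frobenius has two distinct eigenvalues (coincidence is a proper closed locus), while on each unipotently-ramified Steinberg-twist component the two Frobenius eigenvalues differ by $\eps(\phi)=\ell\ne 1$, so the discriminant is nonzero at every generic point.

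The characterization of $F$-points is then essentially formal: a map $x:\Rtwuniv_{\ell}\to F$ factors through $R^{\mmd,\psi}_{\ell}$ iff it factors through $R^{\univ,\psi,\tau}_{\ell}$, so that $V_x$ has determinant $\psi$ and unipotent inertia, and $x(\alpha_{\ell})$ is a root of the Frobenius characteristic polynomial reducing to $a_{\ell}$. For a representation with unipotent inertia at $\ell\ne p$, the data of a distinguished Frobenius root is equivalent to the data of an unramified quotient on which $\Frob_{\ell}$ acts by that root: in the genuinely ramified Steinberg case the unramified quotient is canonical and its Frobenius eigenvalue must be one of the two roots, while in the unramified case $\alpha_{\ell}$ simply selects one of the two $\Frob_{\ell}$-eigenspaces. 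The main obstacle is thus the discriminant/reducedness check in the repeated-eigenvalue case; it is however a finite verification against the explicit list of components already present in~\cite{gee} rather than a substantive new deformation-theoretic input.
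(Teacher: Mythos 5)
Your construction has a genuine gap on the Steinberg (special) component. The localized base change $R^{\univ,\psi,\tau}_{\ell}\otimes_{\Runiv_{\ell}}\Rtwuniv_{\ell}$ adjoins to each component of $R^{\univ,\psi,\tau}_{\ell}$ \emph{both} roots of the Frobenius characteristic polynomial that reduce to $a_{\ell}$. In the relevant case where $\rhobar(\phi)$ has a repeated eigenvalue and $\ell\equiv 1\bmod p$ (the only case in which a ramified component survives), both roots of the quadratic are congruent to $a_{\ell}$ modulo $\m$, so the localization at $(\alpha_{\ell}-a_{\ell})$ discards nothing: over the special component your ring is a genuine degree-two extension. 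But for a genuinely ramified type-$\tau$ representation $V_x$ the unramified quotient is canonical and its Frobenius eigenvalue is \emph{one specific} root, namely $\psi^{1/2}(\ell)\ell^{-1/2}$ (the other root, $\psi^{1/2}(\ell)\ell^{1/2}$, is the eigenvalue on the ramified submodule, and these are distinct in characteristic zero since $\ell\ne 1$ in $F$). Hence your ring has $F$-points $(V_x,\alpha_{\ell})$ with $\alpha_{\ell}$ the eigenvalue on the sub rather than the quotient, and these do not satisfy the stated condition; the ``only if'' direction of the $F$-point characterization fails. Your closing sentence that the Steinberg eigenvalue ``must be one of the two roots'' is true but is precisely where the equivalence you need breaks down: a choice of root is not equivalent to a choice of unramified quotient.

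The paper's proof avoids this by not taking the naive base change. It first splits off the easy cases (distinct residual eigenvalues, where $\Rtwuniv_{\ell}\simeq\Runiv_{\ell}$; and $\ell\not\equiv 1\bmod p$ with repeated eigenvalues, where only the unramified locus survives), and in the remaining case $\ell\equiv 1\bmod p$ it decomposes $R^{\univ,\psi,\tau}_{\ell}$ into an unramified quotient $R^{\unr,\psi}_{\ell}$ and a special quotient $R^{\sp,\psi}_{\ell}$ cut out by $\Tr(\rho(\phi))=\ell^{-1/2}(1+\ell)\psi^{1/2}(\ell)$. Over the unramified part it does take the full double cover (there both roots are legitimate, as you say), but over the special part the quadratic factors as $(\alpha_{\ell}-\psi^{1/2}(\ell)\ell^{1/2})(\alpha_{\ell}-\psi^{1/2}(\ell)\ell^{-1/2})$ and the paper passes to the quotient on which $\alpha_{\ell}=\psi^{1/2}(\ell)\ell^{-1/2}$, i.e.\ it selects the unit-root/unramified-quotient branch, obtaining $\RR^{\sp,\psi}_{\ell}\simeq R^{\sp,\psi}_{\ell}$ rather than a degree-two cover. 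The modified ring is then the image of $\Rtwuniv_{\ell}$ in $\RR^{\unr,\psi}_{\ell}\oplus\RR^{\sp,\psi}_{\ell}$. (This step uses $\ell\not\equiv -1\bmod p$ to make the relevant square roots well defined.) Your discriminant/reducedness discussion is essentially sound but is not the main issue; the missing idea is the further quotient selecting the correct root on the special component.
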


The arguments are very similar to those already in the literature, but for want
of a reference which covers this case exactly, we give the details.

\begin{proof} Suppose that the eigenvalues of~$\rhobar(\phi)$ are distinct.
Then, under the isomorphism~$\Runiv_{\ell} \simeq \Rtwuniv_{\ell}$,
 we may take~$ \RR^{\univ,\psi,\tau}_{\ell} = R^{\univ,\psi,\tau}_{\ell}$.
 Hence we may assume that the eigenvalues are the same.
   Any representation
 $$\rho_x: G_{\Q_{\ell}} \rightarrow \GL_2(F)$$
for which the image of inertia has non-trivial unipotent image is, up to twist, and
enlarging~$F$ if necessary, 
an extension of~$F$ by~$F(1)$. In particular, the ratios of the eigenvalues of~$\rho_x(\phi)$
must be equal to~$\ell$. Since we are assuming
the eigenvalues of~$\rhobar(\phi)$ coincide,
then, if~$\ell \not \equiv 1 \mod p$,~$ R^{\univ,\psi,\tau}_{\ell}$ has no such quotients, and 
will consist precisely the unramified locus. In this case,
 we may take $\RR^{\univ,\psi,\tau}_{\ell}$ to be the double cover corresponding
to the unramified locus with a choice of Frobenius eigenvalue. 
Hence we may assume that~$\ell \equiv 1 \mod p$, and in 
particular~$\ell \not\equiv -1 \mod p$.

Assume that~$\rhobar$ is unramified.
The ring~$R^{\univ,\psi,\tau}_{\ell}$
admits  two natural quotients; a quotient~$R^{\unr,\psi}_{\ell}$ corresponding to
representations which are  unramified, and a quotient
corresponding to representations for which the ratios of the eigenvalues of Frobenius are equal to~$\ell$.
Because the determinant is fixed, this latter quotient is given by imposing the the equation
$$\Tr(\rho(\phi))^2 =  \ell^{-1} (1 + \ell)^2 \psi(\ell).$$
Since~$\ell \not \equiv -1 \mod p$, the right hand side is a unit, and hence there is exactly
one square root of this equation which is compatible with the choice of~$a_{\ell}$, and so this is equivalent
to the equation
$$\Tr(\rho(\phi)) =  \ell^{-1/2} (1 + \ell) \psi^{1/2}(\ell)$$
for the appropriate choice of square root.
The  ring obtained by imposing this relation on~$R^{\univ,\psi,\tau}_{\ell}$ may
or may not be either~$\OL$-flat or reduced, but let~$R^{\sp,\psi}_{\ell}$ denote
the largest quotient with this property ($\sp$ is for special). Its~$F$-points will still  include all ramified
representations of type~$\tau$.
 The pre-image of the corresponding affine scheme under
the projection $\Spec(\Rtwuniv_{\ell}) \rightarrow \Spec(\Runiv_{\ell})$ is equal to
$$R^{\sp,\psi}_{\ell}[\alpha_{\ell}]/(\alpha^2_{\ell} - \ell^{-1/2} (1 + \ell) \psi^{1/2}(\ell) \cdot \alpha_{\ell} + \psi(\ell)).$$
The quadratic relation factors as
$$(\alpha_{\ell} - \psi^{1/2}(\ell) \cdot \ell^{1/2})(\alpha_{\ell} - \psi^{1/2}(\ell) \cdot \ell^{-1/2}).$$
Define~$\RR^{\sp,\psi}_{\ell}$ to be the quotient on which~$\alpha_{\ell} = \psi^{1/2}(\ell) \cdot \ell^{-1/2}$.
There is a corresponding isomorphism
$$R^{\sp,\psi}_{\ell} \rightarrow \RR^{\sp,\psi}_{\ell}.$$
On the other hand, the quotient $R^{\unr,\psi}_{\ell}$ is a formally smooth. In this case, we
let~$\RR^{\unr,\psi}_{\ell}$ be the finite flat degree two extension given by adjoining
an eigenvalue~$\alpha_{\ell}$ of the characteristic polynomial of Frobenius.
We now let~$R^{\mmd,\psi}_{\ell} = \RR^{\univ,\psi,\tau}$ be the image of~$\Rtwuniv_{\ell}$
under the map
$$\Rtwuniv_{\ell} \rightarrow \RR^{\unr,\psi}_{\ell} \oplus \RR^{\sp,\psi}_{\ell}.$$
It is~$\OL$-flat and reduced because both~$ \RR^{\unr,\psi}_{\ell}$ and~$ \RR^{\sp,\psi}_{\ell}$
have this property. Moreover, the~$F$-points for finite extensions~$F/E$
 correspond exactly
to either an unramified representation together with a choice of Frobenius, or a ramified ordinary
representation together with~$\alpha_{\ell}$ being sent to the action of Frobenius on the unramified
quotient. Geometrically, $\Rmod_{\ell}$ consists of the union of two components, one the special
component of~$R^{\univ,\psi,\tau}_{\ell}$, and the other a double cover of the unramified
component of~$R^{\univ,\psi,\tau}_{\ell}$.
We also remark that, by construction, the image
of the universal deformation ring~$R^{\univ,\psi}$ in~$\Rmod_{\ell}$ will
be precisely~$R^{\univ,\psi,\tau}$.
\end{proof}

We also note the following:

\begin{corr} \label{corr:non} 
Suppose that~$p > 2$ and~$\ell \ne p$.
Let $x: \Rtwuniv_{\ell} \rightarrow E[\eps]/\eps^2$ be a surjective map so that the image
of~$\Runiv$ is~$E$, and the corresponding Galois representation~$\rho_x: G_{\Q_{\ell}}
\rightarrow \GL_2(E)$ is unramified. Then~$x$ factors through~$R^{\mmd,\psi}_{\ell}$.
\end{corr}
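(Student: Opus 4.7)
The plan is to exploit the strong constraints the hypotheses place on $x$ and then to factor $x$ explicitly through the unramified double cover appearing in the construction of $R^{\mmd,\psi}_\ell$. First I would observe that $\rhobar(\phi)$ must have a repeated eigenvalue: if the eigenvalues were distinct, Lemma~\ref{lemma:hensel} would give $\Rtwuniv_\ell \simeq \Runiv_\ell$, so the image of $x$ would coincide with that of $\Runiv$, namely $E$, contradicting surjectivity onto $E[\eps]/\eps^2$. Writing $x(\alpha_\ell) = \alpha + \eps \beta$ and letting $T,D \in E$ denote the images of $\Tr(\rho^{\univ}(\phi))$ and $\det(\rho^{\univ}(\phi))$, the defining quadratic relation for $\alpha_\ell$ splits as
$$\alpha^2 - T\alpha + D = 0, \qquad \beta(2\alpha - T) = 0,$$
and surjectivity of $x$ forces $\beta \ne 0$, so $2\alpha = T$; in particular $\alpha$ is a repeated root of the characteristic polynomial of Frobenius for $\rho_x$.

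Next, since $\rho_x$ is unramified, $x|_{\Runiv_\ell}$ factors through $R^{\unr,\psi}_\ell$, and I would then extend this to a map $\RR^{\unr,\psi}_\ell \to E[\eps]/\eps^2$ by sending the universal Frobenius eigenvalue to $\alpha + \eps\beta$. This is well-defined precisely because $(\alpha+\eps\beta)^2 - T(\alpha+\eps\beta) + D = \eps \beta(2\alpha-T) = 0$ in $E[\eps]/\eps^2$ by the previous step, so $\alpha+\eps\beta$ is a root of the characteristic polynomial. The composition $\Rtwuniv_\ell \to \RR^{\unr,\psi}_\ell \to E[\eps]/\eps^2$ then agrees with $x$ both on $\Runiv_\ell$ and on $\alpha_\ell$, hence equals $x$.

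When $\ell \not\equiv 1 \pmod p$ we have $R^{\mmd,\psi}_\ell = \RR^{\unr,\psi}_\ell$ and are finished immediately; otherwise $R^{\mmd,\psi}_\ell$ is the image of $\Rtwuniv_\ell$ inside $\RR^{\unr,\psi}_\ell \oplus \RR^{\sp,\psi}_\ell$, and projection onto the first factor gives a ring map $R^{\mmd,\psi}_\ell \to \RR^{\unr,\psi}_\ell$ through which $x$ factors. The main obstacle is really just the double-root computation $2\alpha = T$, which supplies the compatibility needed to extend to $\RR^{\unr,\psi}_\ell$; once this is in place, everything else follows directly from the construction of $R^{\mmd,\psi}_\ell$ in Lemma~\ref{lemma:gee}.
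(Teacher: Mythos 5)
Your argument is correct and is essentially an expanded version of the paper's own (very brief) proof: the paper likewise observes that the hypotheses force the eigenvalues of $\rho_x(\phi)$ to coincide and that $x$ then factors through $\RR^{\unr,\psi}_{\ell}$, which is a quotient of $R^{\mmd,\psi}_{\ell}$. Your computation $\beta(2\alpha - T)=0$ and the explicit extension to $\RR^{\unr,\psi}_{\ell}$ just make explicit the steps the paper leaves implicit.
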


\begin{proof} Such a representation exists exactly when the eigenvalues of $\rho_x(\phi)$ are equal.
We see that~$x$ certainly factors through~$\RR^{\unr,\psi}$, which is a quotient 
of~$R^{\mmd,\psi}_{\ell}$.
\end{proof}

If the determinant is explicit from the context, we write~$\Rmod_{\ell}$ rather than~$R^{\mmd,\psi}_{\ell}$.
We give a precise description of the special fibre of~$\Rmod_{\ell}$ when~$\ell \equiv 1 \mod p$,
~$\psi = 1$, 
and~$\rhobar$ is trivial (this result will only be used for the proof
of Theorem~\ref{theorem:blog} in section~\S\ref{section:blog}.) Note that,
since~$\rhobar$ by assumption is tamely ramified at~$\ell$,
the image
of any deformation also factors through tame inertia, hence through the 
group $\langle  \tau,\phi \rangle$ with $\phi \tau \phi^{-1} = \tau^{\ell}$.

\begin{lemma} \label{lemma:special} Suppose that~$p > 2$, that~$\ell \equiv 1 \mod p$, that~$\psi | G_{\Q_{\ell}} \rightarrow
\GL_2(k)$ is trivial. Then~$\Rmod_{\ell}$ represents the functor of deformations of~$\rhobar$
to~$A$ together with an~$\alpha_{\ell} \in A$ satisfying the following conditions:
 \begin{itemize}
\item $\tr(\rho(\tau)) = 2$,
\item $(\rho(\tau) - 1)^2 = 0$,
\item $(\rho(\tau) - 1)(\rho(\phi) - \alpha_{\ell}) = 0$,
\item $(\rho(\phi) -   
\alpha^{-1}_{\ell})(\rho(\tau) - 1) = 0$,
\item $(\rho(\phi) -  \alpha_{\ell})(\rho(\phi) - \alpha^{-1}_{\ell}
) = 0$.
\end{itemize}
\end{lemma}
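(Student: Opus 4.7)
The plan is to verify that the universal pair on $\Rmod_{\ell}$ satisfies conditions (i)--(v) --- giving a surjection $R' \twoheadrightarrow \Rmod_{\ell}$ from the universal ring $R'$ representing the functor in the lemma --- and then to show this surjection is an isomorphism. For the forward direction, I would use the $\OL$-flatness and reducedness of $\Rmod_{\ell}$ from Lemma~\ref{lemma:gee} to reduce the verification to $F$-valued points for finite extensions $F/E$. By Lemma~\ref{lemma:gee} such points split into two types: unramified with $\alpha_\ell$ a Frobenius eigenvalue (where $\rho(\tau) = 1$ makes conditions (i)--(iv) trivial, while (v) is Cayley--Hamilton); and ramified ordinary, where in a suitable basis $\rho(\tau) = \bigl(\begin{smallmatrix} 1 & c \\ 0 & 1 \end{smallmatrix}\bigr)$ and $\rho(\phi) = \bigl(\begin{smallmatrix} \alpha_\ell^{-1} & b \\ 0 & \alpha_\ell \end{smallmatrix}\bigr)$, and all five conditions follow by direct $2\times 2$ matrix computation.

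For the reverse direction, I take a pair $(\rho, \alpha_\ell)$ over $A$ satisfying (i)--(v) and set $N = \rho(\tau) - 1$. Conditions (i)--(ii) give $N^2 = 0$ and $\tr(N) = 0$, so $(1+N)^\ell = 1 + \ell N$. The tame group relation $\rho(\phi)\rho(\tau)\rho(\phi)^{-1} = \rho(\tau)^\ell$ then yields $\rho(\phi) N \rho(\phi)^{-1} = \ell N$, while conditions (iii)--(iv) give $N\rho(\phi) = \alpha_\ell N$ and $\rho(\phi) N = \alpha_\ell^{-1} N$, and hence $\rho(\phi) N \rho(\phi)^{-1} = \alpha_\ell^{-2} N$. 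Combining, $(\alpha_\ell^{-2} - \ell) N = 0$. Together with the Cayley--Hamilton relation (v), these identify the locus cut out by the conditions in $\Rtwuniv_{\ell}$ as the union of two pieces matching those in the proof of Lemma~\ref{lemma:gee}: an unramified component where $N = 0$ and $\alpha_\ell$ is a Frobenius eigenvalue of $\rho(\phi)$ (corresponding to $\RR^{\unr,\psi}$), and a special component where $\alpha_\ell^2 = \ell^{-1}$ and $\alpha_\ell$ is the unramified-quotient Frobenius eigenvalue (corresponding to $\RR^{\sp,\psi}$).

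The main obstacle is upgrading this component-wise agreement from $F$-points to an actual isomorphism of local $\OL$-algebras. Since $\Rmod_{\ell}$ is reduced and $\OL$-flat by Lemma~\ref{lemma:gee}, and since the calculation above shows that the five conditions explicitly cut out $\RR^{\unr,\psi}\cup\RR^{\sp,\psi}$ in $\Rtwuniv_{\ell}$ as described in the proof of Lemma~\ref{lemma:gee}, the surjection $R' \twoheadrightarrow \Rmod_{\ell}$ is a bijection on $F$-points and an equality of defining ideals; flatness and reducedness of $\Rmod_{\ell}$ then promote this to an isomorphism of rings.
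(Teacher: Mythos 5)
Your forward direction and your computation of $F$-points are fine and match the first half of the paper's argument: the paper likewise observes that the five relations cut out a quotient $\Rtw$ of $\Rtwuniv_{\ell}$ whose $F$-points (for finite $F/E$) are exactly the unramified points with $\alpha_{\ell}$ a Frobenius eigenvalue together with the ramified points with $\alpha_{\ell}^{-2}=\ell$, i.e.\ exactly the $F$-points of $\Rmod_{\ell}$.

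The gap is in your final step. The criterion you are implicitly invoking (Corollary~2.3 of~\cite{KhareW2}, quoted before Lemma~\ref{lemma:special}) says that \emph{$\OL$-flat reduced} quotients of $\Runiv_{\ell}$ are determined by their $F$-points. To conclude that $R'\twoheadrightarrow\Rmod_{\ell}$ is an isomorphism you therefore need $R'$ itself --- the ring actually cut out by the five equations --- to be $\OL$-flat and reduced; flatness and reducedness of the \emph{target} $\Rmod_{\ell}$ buy you nothing here, since a priori $R'$ could have nilpotents or $\varpi$-torsion invisible on $F$-points, in which case the kernel of the surjection would be a nonzero ideal of such elements and the two rings would represent different functors on Artinian $A$. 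Your phrase ``an equality of defining ideals'' is precisely the assertion to be proved, and it does not follow from agreement on $F$-points. This is where the paper does real work: it identifies the special fibre $\Rtw/\varpi$ with the completion of Snowden's ring $\CC_0$ at the point $(1;1;0)$ (as in Lemma~4.7.4 of~\cite{Snowden}), checks that this special fibre has exactly two minimal primes, matches them against the two minimal primes of $\Rtw[1/\varpi]$ (the unramified and special components), and then applies Propositions~2.2.1 and~2.3.1 of~\cite{Snowden} to deduce that $\Rtw$ is $\OL$-flat and reduced. Only then does the $F$-point comparison yield the isomorphism. Your proof is missing this entire ingredient.
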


The argument is similar (but easier) to the corresponding
arguments of Snowden (\cite{Snowden}~\S4.5). In fact, our argument amounts
to the case~$d = 0$ of a theorem proved by Snowden for all integers~$d > 0$.
The only reason that Snowden does not consider this case is that, in his context, 
$d$ is the degree of a finite extension of~$\Q_p$.

\begin{proof} 
The  last equation says that~$\alpha_{\ell}$
satisfies the characteristic polynomial of Frobenius. Hence the functor is certainly represented
by a quotient~$\Rtw$ of the universal such ring~$\Rtwuniv_{\ell}$. Let us show that~$\MaxSpec(\Rtw) = \MaxSpec(\Rmod_{\ell})$
(inside~$\MaxSpec(\Rtwuniv_{\ell})$).
Let~$x: \Rtw \rightarrow F$ be a point of~$\MaxSpec(\Rtw)$. 
If~$\rho(\tau)$ is trivial, then the equations reduce to the statement that~$\alpha_{\ell}$
is an eigenvalue of Frobenius, and these correspond exactly to the unramified points of~$\Rmod_{\ell}$.
If~$\rho(\tau)$ is non-trivial, then, from the first equation, its minimal polynomial will be~$(X-1)^2$, and so, after conjugation,
has the shape
$$\rho(\tau) = \left( \begin{matrix} 1 & 1 \\ 0 & 1 \end{matrix} \right).$$
The other equations then imply that
$$\rho(\phi) = \left( \begin{matrix} \alpha^{-1}_{\ell} & * \\ 0 & \alpha_{\ell} \end{matrix} \right).$$
Finally, from the equation $\phi \tau \phi^{-1}  = \tau^{\ell}$, we deduce
that~$\alpha^{-2}_{\ell} = \ell$. In particular, the representation is, up to twist,
an extension of~$F$ by~$F(1)$, which corresponds exactly to points on the
special component of~$\Rmod_{\ell}$. It follows from
Corollary~2.3 of~\cite{KhareW2} (see also
Lemma~\ref{lemma:kisin} below) that~$\OL$-flat
reduced quotients of~$\Runiv_{\ell}$ are characterized by their~$F$ points
for finite extensions~$F/E$. Since we have shown that~$\Rtw$ and~$\Rmod_{\ell}$ have
the same such quotients, and since~$\Rmod_{\ell}$
is~$\OL$-flat and reduced, it suffices to show that~$\Rtw$ is~$\OL$-flat and reduced.
The special fibre~$\Rtw/\varpi$ 
 is exactly the completion
of $\CC_0$ at $c = (1;1;0)$  in the notation of~\cite{Snowden} (\S3.5).
The proof of this  is identical to the proof of Lemma~4.7.4 of~\cite{Snowden}.
On the special fibre, the equation~$(\rho(\tau) - 1)^2 = 0$ implies that~$(\rho(\tau) - 1)^p = 0$
and so~$\rho(\tau^p)$ is trivial, and~$\rho(\tau^{\ell}) = \rho(\tau)$. Hence
the action of conjugation by~$\rho(\phi)$ on~$\rho(\tau)$ is trivial.
In~\cite{Snowden}, the image of inertia factors through an exponent~$p$
commutative group which,
as a module for~$\F_p \llbracket T \rrbracket$ where~$1+T$ acts as conjugation by~$\sigma$,
is isomorphic to~$U = \F_p \oplus \F_p\llbracket T \rrbracket^{\oplus d}$. In our context, the action of inertia
commutes with~$\sigma$ and
factors through a group~$U = \F_p$.
In particular, letting~$m = \rho(\tau) - 1$ and~$\varphi$ be the image of~$\rho(\phi)$,
the tuple~$(\varphi,\alpha,m)$ is the corresponding point on~$\CC_0$.  The rest
of the argument follows the proof of Theorem~4.7.1 of~\cite{Snowden}.
The ring~$\CC_{0}$ has two
minimal primes (corresponding to $(\alpha - 1)$ and~$m$, which
come from the components~$\AA_2$ and~$\BB_0$ respectively, in the
notation of~\cite{Snowden}). On the other hand, as we have shown,
$\Rtw[1/\varpi]_{\red} = \Rmod[1/\varpi]$ has two minimal primes
corresponding to the unramified and ordinary locus, so~$\Rtw[1/\varpi]$
has two minimal primes, and so, by Propositions~2.2.1 and~2.3.1 of~\cite{Snowden},
it follows that~$\Rtw$ is~$\OL$-flat and reduced, and we are done.
\end{proof}

\subsection{The functors~$D_Q$ and~$D^1_Q$}
\label{section:definitions}

In this section,~$\rhobar$ will be a global Galois representation unramified at~$p$ with the primes~$N = S \cup P$
as in the introduction.
We now define modified deformation rings $R^1_{Q}$ and $R_{Q}$
for certain sets $Q$ of auxiliary primes distinct from $N$ and $p$.  
Let~$D_{\ell}$ denote  the decomposition group~$G_{\Q_{\ell}} \subset G_{\Q}$.
The superscript~$1$ refers to weight one,
and the lack of superscript will refer to weight~$p$.
Note that $R^1_{\emptyset} = R^1$.
Besides the representation~$\rhobar$, part of the data required
to define~$\Done_Q$ and~$D_{Q}$ consists of a
fixed choice of elements~$a_{\ell} \in k$ for~$\ell$ dividing~$N$ and~$Q$.
 Moreover,
for~$D_{Q}$, we also fix an~$a_p \in k$.
Let $\Done_Q(A)$ and $D_Q(A)$  consist of  deformations $\rho$ to $A$ and a collection of elements $\alpha_{\ell} \in A$
 for $\ell \in N$ (and $\alpha_p \in A$ for $D_Q(A)$) such that:
\begin{enumerate}
\item $\det(\rho) = \psi$, where $\psi$ is the Teichmuller
lift of~$\det(\rhobar)$ for $\rho \in \Done_{Q}(A)$,
and $\det(\rho) = \psi \eps^{p-1}$ for $\rho \in D_{Q}(A)$.
\item $\rho$ is unramified outside $N \cup Q = S \cup P \cup Q$ for $\rho \in \Done_{Q}(A)$, and unramified outside $N \cup S
\cup Q \cup \{p\}$ for $\rho \in D_{Q}(A)$.
\item \label{item:P} If $\ell \in P$, then
$\rho |D_{\ell} \simeq 
 \chi^{-1} \psi |_{D_{\ell}} \oplus \chi$ if $\rho \in \Done_{Q}(A)$ and $\rho |D_{\ell} \simeq 
 \chi^{-1} \psi \eps^{p-1} |_{D_{\ell}} \oplus \chi$  if $\rho \in D_Q(A)$, where
 $\chi$ is an  unramified character  and 
 $\chi(\Frob_{\ell}) = \alpha_{\ell}  \equiv  a_{\ell} \mod \m$.
\item \label{item:S} 
If $\ell \in S$, then
$\rho | D_{\ell}$ corresponds to an~$A$-valued quotient of~$R^{\mmd}_{\ell}$,
where we take the determinant to be~$\psi$ if~$\rho \in \Done_{Q}(A)$
and~$\psi \cdot \eps^{p-1}$   if~$\rho \in D_{Q}(A)$, and, in either case~$\alpha_{\ell}
 \in \Rmod_{\ell}$ is $a_{\ell} \mod \m$.
\item  \label{item:Q} If $\ell \in Q$, then  $\ell \equiv 1 \mod p$, and $\rhobar(\Frob_{\ell})$ has distinct eigenvalues. 
Then $\rho |D_{\ell} \simeq \phi^{-1} \psi |_{D_{\ell}}  \oplus \phi$, where $\phi$ is a character  of
$\Q^{\times}_{\ell} \subset G^{\mathrm{ab}}_{\Q_{\ell}}$ such that
$\phi(\ell) = \alpha_{\ell} \equiv a_{\ell} \mod \m$. 
\item If $\rho \in D_Q(A)$ and $\ell = p$, then $\rho|D_p$ is ordinary with eigenvalue $\alpha_p \equiv a_p \mod \m$.
\end{enumerate}

In order for these functors to be
non-zero,  the~$a_{\ell}$  for
$\ell \in N \cup Q$ must be chosen to be one of the eigenvalues of $\rhobar(\Frob_{\ell})$, and $a_p$ 
must be one of the eigenvalues
of $\rhobar(\Frob_p)$.  As always, we may extend scalars from $k$ to a field which contains
all necessary eigenvalues. 
For each $\ell \in N$, there exists a corresponding  universal framed local deformation
ring associated to our deformation problem. There is no subtlety in defining
these rings outside the case of primes in~$S$, and at the prime~$p$.
The first case was addressed in the previous section.
For~$\ell = p$,
we use the modified deformation rings as constructed by Snowden (\cite{Snowden}, see
in particular  \S4.6).
For each~$\ell$, we denote the corresponding modified local deformation ring (with
the appropriate determinant) by~$\Rmod_{\ell}$.

\begin{prop} \label{prop:CM} For all of the $\ell$ different from~$p$, the corresponding modified local deformation
ring  $\Rmod_{\ell}$ is an $\OL$-flat  reduced equidimensional  ring of relative dimension~$3$ over~$\OL$.  If $\ell = p$ and $D = D_{Q}$, then   $\Rmod_{\ell}$ is an $\OL$-flat  reduced
equidimensional ring of dimension~$4$ over~$\OL$. 
\end{prop}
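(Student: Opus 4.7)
The plan is to break into four cases by the type of prime: $\ell \in S$, $\ell \in P$, $\ell \in Q$, and $\ell = p$. In each case we will identify $\Rmod_{\ell}$ explicitly enough that the required properties (flatness, reducedness, equidimensionality, dimension) are either immediate or formal.

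The case $\ell \in S$ is already contained in Lemma~\ref{lemma:gee}: there $\Rmod_{\ell}$ was constructed precisely as $\RR^{\univ,\psi,\tau}_{\ell}$ and was shown to be $\OL$-flat, reduced, and equidimensional of total dimension $4$, which is relative dimension $3$ over $\OL$. So nothing further is needed there.

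For $\ell \in P$ we use that $\psi$ is ramified at $\ell$ and is the Teichm\"uller lift of $\det(\rhobar)$, so $\psi|_{I_{\ell}}$ has prime-to-$p$ order and in particular is nontrivial mod $\varpi$. The deformation condition forces $\rho|_{D_{\ell}}\simeq \chi^{-1}\psi\oplus\chi$ with $\chi$ unramified, and since the two summands have distinct restrictions to $I_{\ell}$ (one nontrivial, one trivial), this decomposition is canonical on any deformation. Thus the underlying Galois deformation functor is pro-represented by the universal unramified character $\chi$ (one parameter, namely $\alpha_{\ell}=\chi(\Frob_{\ell})$), and adding the framing (which contributes $\dim\GL_2 - \dim(\text{diagonal torus})=2$ parameters) produces a formally smooth ring of relative dimension $3$ over $\OL$. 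Flatness, reducedness, and equidimensionality are immediate.

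For $\ell \in Q$ the hypothesis is that $\rhobar(\Frob_{\ell})$ has distinct eigenvalues, so Lemma~\ref{lemma:hensel} gives $\Rtwuniv_{\ell}\simeq \Runiv_{\ell}$ and $\alpha_{\ell}$ is simply the distinguished Frobenius eigenvalue cut out by Hensel. The deformation condition (item~\ref{item:Q}) is the standard Taylor--Wiles principal series condition: $\rho|_{D_{\ell}}\simeq \phi^{-1}\psi\oplus\phi$ with $\phi$ factoring through $\Q_{\ell}^{\times}$. Because $\ell\equiv 1\bmod p$, $\phi$ is determined by its values on $\Frob_{\ell}$ and on a topological generator of the pro-$p$ quotient of $\Z_{\ell}^{\times}$; combined with the framing modulo the centralizing diagonal torus this gives a formally smooth ring of the required relative dimension $3$.

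Finally, for $\ell = p$ in the case $D=D_Q$, the ring $\Rmod_p$ is by definition the ordinary modified deformation ring with fixed determinant $\psi\cdot\eps^{p-1}$ and a choice of unramified quotient on which $\Frob_p$ acts by $\alpha_p$; the properties claimed (flat, reduced, equidimensional of dimension $4$ over $\OL$) are exactly what Snowden establishes in~\S4.6 of~\cite{Snowden}, and we quote the result. The main obstacle would have been the $\ell \in S$ case (where the ring is genuinely singular, being the union of a double cover of the unramified component and the special/Steinberg component), but that is precisely what Lemma~\ref{lemma:gee} was designed to dispatch; and the analogous hard work at $\ell=p$ is outsourced to~\cite{Snowden}. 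The remaining cases $P$ and $Q$ reduce to routine smoothness verifications.
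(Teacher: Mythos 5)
Your case division and your handling of the $\ell \in S$ and $\ell = p$ cases match the paper's proof, which likewise quotes Lemma~\ref{lemma:gee} and Snowden respectively; and your direct computation at $\ell \in P$ (canonical splitting because $\psi|_{I_{\ell}}$ has order prime to $p$, one parameter $\alpha_{\ell}$ plus two framing parameters) is a reasonable, more self-contained substitute for the paper's citation of Lemma~4.11 of~\cite{CG}.

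The case $\ell \in Q$, however, contains a genuine error: the ring there is \emph{not} formally smooth over~$\OL$. The character $\phi$ of condition~(\ref{item:Q}), restricted to inertia, factors through the $p$-Sylow subgroup of $(\Z/\ell\Z)^{\times}$, a \emph{finite} cyclic group of order $m = p^{v}$ with $p^{v} \,\|\, (\ell-1)$. Hence the value of $\phi$ on a generator of tame inertia is constrained to satisfy $x^{m}=1$ and is not a free parameter, contrary to what your sentence suggests. The resulting ring is (as the paper records, following Shotton) $\OL\llbracket X,Y,Z,P\rrbracket/((1+P)^{m}-1)$, whose special fibre is $k\llbracket X,Y,Z,P\rrbracket/(P^{m})$ --- visibly non-reduced, so the ring cannot be smooth. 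This is not a cosmetic point: the non-smoothness at Taylor--Wiles primes is precisely what makes the diamond-operator patching work, and reducedness is exactly the nontrivial assertion your argument was supposed to deliver. It does not follow from your (false) smoothness claim; it must instead be deduced from $\OL$-flatness together with reducedness of the generic fibre (where $(1+P)^{m}-1$ is separable), or read off from the explicit presentation as in Proposition~7 of~\cite{Shotton}. The conclusion of the proposition remains correct for $Q$-primes, but your justification for it fails.
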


\begin{proof} We consider each deformation ring in turn.
\begin{enumerate}
\item Suppose that $\ell \in P$. By assumption, $\psi$ is ramified at~$\ell$ and hence $\alpha_{\ell}$
is uniquely determined by $\rho |D_{\ell}$. Hence we recover the framed local deformation ring, and
the result follows from Lemma~4.11 of~\cite{CG}.
\item Suppose that $\ell \in S$. Then the result follows from Lemma~\ref{lemma:gee}.
\item Suppose that~$\ell \in Q$. The assumption that~$\ell \equiv 1 \mod p$ and 
that~$\rhobar(\Frob_{\ell})$ has no distinct eigenvalues implies that there is no
distinction between~$\Runiv_{\ell}$ and~$\Rtwuniv_{\ell}$. Moreover,
all deformations of~$\rhobar$ will be tamely ramified and split as a direct sum
of two characters, and so~$\Rmod_{\ell} = R^{\univ,\psi}_{\ell}$ in this case.
The ring~$R^{\univ,\psi}_{\ell}$ has the desired properties by a 
direct computation, see for example Proposition~7 of~\cite{Shotton}: it
may be identified with $\OL \llbracket X,Y,Z,P \rrbracket/((1+P)^{m} - 1)$, where~$m$ is
the largest power of~$\ell$ dividing~$p-1$.
\item If $\ell = p$, and $\rhobar(\Frob_p)$ has distinct eigenvalues, then the usual definition of 
an ordinary deformation ring $R_{p}$
requires a choice of eigenvalue of the unramified quotient, and hence $\Rmod_{p}$ is just the usual
Kisin ring $R_p$ in this case.
If $\rhobar(\Frob_p)$ has the same
eigenvalues, then the local modified deformation ring is exactly the completion
of $\mathcal{B}_1$ at $b = (1;1;0)$ considered in~\cite{Snowden}~\S3.4 and denoted by~$\Rtd$ 
in~\cite{CG}~\S3.7. The case when $\rhobar(\Frob_p)$
has the same eigenvalues but is non scalar corresponds to the localization of  $\mathcal{B}_1$
at $\left(\left( \begin{matrix}1 & 1 \\ 0 & 1 \end{matrix} \right);1;0\right)$. 
In either case, $\Rmod_{p}$ is~$\OL$-flat, reduced,  equidimensional of relative
dimension~$4$ (over~$\OL$), and Cohen--Macaulay. 
\end{enumerate}
\end{proof}

We also present here the following proposition which will be useful later.
 (cf. Lemma~3.4.12 of~\cite{kisin-moduli}.)

\begin{lemma} \label{lemma:kisin} Let~$A$ and~$B$ be complete local  Noetherian reduced~$\OL$-flat algebras with residue field~$k$.
Then~$A \widehat{\otimes}_{\OL} B$ is reduced and~$\OL$-flat.
\end{lemma}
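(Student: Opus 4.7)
I plan to write $A$ and $B$ as quotients of formal power series rings over $\OL$ and treat the two claims separately. Choose presentations $A \simeq \OL\llbracket x_1,\ldots,x_m\rrbracket/I$ and $B \simeq \OL\llbracket y_1,\ldots,y_n\rrbracket/J$, so that
\[
A \widehat{\otimes}_{\OL} B \;\simeq\; \OL\llbracket x_1,\ldots,x_m,y_1,\ldots,y_n\rrbracket/(I,J) \;\simeq\; A\llbracket y_1,\ldots,y_n\rrbracket \,/\, J\cdot A\llbracket y_1,\ldots,y_n\rrbracket.
\]

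The key intermediate step for $\OL$-flatness is that the map $\OL\llbracket y\rrbracket \to A\llbracket y\rrbracket$ is flat. Since $A$ is $\OL$-flat, $\varpi$ is a nonzerodivisor on $A\llbracket y\rrbracket$, and $A\llbracket y\rrbracket/\varpi \simeq (A/\varpi)\llbracket y\rrbracket$, on which $y_1,\ldots,y_n$ visibly form a regular sequence. Hence $\varpi,y_1,\ldots,y_n$ is a regular sequence on $A\llbracket y\rrbracket$; since $\OL\llbracket y\rrbracket$ is regular local with maximal ideal $(\varpi,y_1,\ldots,y_n)$, the corresponding Koszul complex is a free resolution of $k$, and its acyclicity against $A\llbracket y\rrbracket$ combined with the local criterion yields flatness of $\OL\llbracket y\rrbracket \to A\llbracket y\rrbracket$. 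Applying this flat functor to $0 \to J \to \OL\llbracket y\rrbracket \to B \to 0$ gives an exact sequence
\[
0 \to J\cdot A\llbracket y\rrbracket \to A\llbracket y\rrbracket \to A\widehat{\otimes}_{\OL}B \to 0,
\]
and it remains to check that $\varpi$ is a nonzerodivisor on the quotient. Via the long exact sequence of $\Tor^{\OL}_{\ast}(-,k)$ this reduces to the identity $J\cdot A\llbracket y\rrbracket \,\cap\, \varpi A\llbracket y\rrbracket \,=\, \varpi \cdot J\cdot A\llbracket y\rrbracket$; flat base change preserves intersections of ideals, so the left-hand side equals $(J \cap \varpi\OL\llbracket y\rrbracket)\cdot A\llbracket y\rrbracket$, while the $\OL$-flatness of $B$ forces $J \cap \varpi\OL\llbracket y\rrbracket = \varpi J$.

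Since $A\widehat{\otimes}_{\OL}B$ is now $\OL$-flat, it embeds into its generic fibre $(A\widehat{\otimes}_{\OL}B)[1/\varpi]$, and it suffices to show the latter is reduced. For this I plan to pass to Berthelot's rigid-analytic generic fibres: the rigid generic fibre of $\mathrm{Spf}\,A$ is reduced because $A$ is $\OL$-flat and reduced (and similarly for $B$), and the rigid generic fibre of $\mathrm{Spf}(A\widehat{\otimes}_{\OL}B)$ is the fibre product of these two reduced rigid spaces over the characteristic-zero (hence perfect) field $E$. Since tensor products of reduced affinoid algebras over a perfect field remain reduced, the fibre product is itself reduced, and this descends to reducedness of $(A\widehat{\otimes}_{\OL}B)[1/\varpi]$. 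The main obstacle is this final reducedness step, whose careful justification is precisely the content of Lemma~3.4.12 of~\cite{kisin-moduli} referenced in the statement and can be invoked directly; by contrast, the $\OL$-flatness portion is essentially formal once the Koszul-based flatness of $\OL\llbracket y\rrbracket \to A\llbracket y\rrbracket$ is in hand.
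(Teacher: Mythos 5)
Your argument is correct, but it is not the route the paper takes; in fact it is essentially the ``related proof in the context of rigid analytic geometry'' credited to Conrad in the acknowledgements, whereas the printed proof follows Kisin's more algebraic suggestion. For $\OL$-flatness the paper simply cites Lemma~19.7.1.2 of EGA~$\mathrm{IV}_1$ (section~0); your Koszul/Tor computation with the identity $J\cap\varpi\OL\llbracket y\rrbracket=\varpi J$ is a correct hands-on substitute. For reducedness the paper argues arithmetically: by Corollary~2.3 of Khare--Wintenberger, a reduced $\OL$-flat complete local Noetherian ring has a separating family of maps to rings of integers $\OL'$, so $B=\varprojlim B_i$ with each $B_i$ reduced and finite flat over $\OL$; this reduces everything to $C=A\otimes_{\OL}B$ with $B$ finite flat, where $C[1/\varpi]=A[1/\varpi]\otimes_E B[1/\varpi]$ is reduced by Bourbaki (Alg.~V~\S15.5, Th.~3(d)) since $B[1/\varpi]$ is a finite separable $E$-algebra. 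Your route instead invokes three nontrivial rigid-analytic inputs that you should cite explicitly if you write this up: that $\mathrm{Spf}(A)^{\mathrm{rig}}$ is reduced when $A$ is reduced, $\OL$-flat and (being complete local Noetherian) excellent; that Berthelot's functor commutes with fibre products; and that $(A\widehat{\otimes}_{\OL}B)[1/\varpi]$ injects into the global functions on the generic fibre (de Jong), plus Conrad's theorem that fibre products of reduced rigid spaces over a perfect field are reduced. What each approach buys: yours is more geometric and generalizes beyond the arithmetic setting, while the paper's stays entirely within commutative algebra at the cost of leaning on the Khare--Wintenberger density of $\OL'$-points.
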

 
\begin{proof}  The~$\OL$-flatness follows from Lemma~19.7.1.2
of section~$0$ of~\cite{EGAfourone}.
Because~$B$ is reduced, it follows from Corollary~2.3 of~\cite{KhareW2} that
the intersection of the kernels of all 
morphisms~$B \rightarrow \OL'$ for the ring
of integers finite extensions~$E'/E$ is trivial.
 Using this,  we may write $B$ as an inverse limit 
$B = \ilim B_i$, 
where each $B_i$ is reduced and finite flat over~$\OL$. Then~$C = \ilim A \otimes B_i$ (now we
can replace~$\widehat{\otimes}$ by~$\otimes$)  and it suffices to
prove the claim for the usual tensor product when~$B$ is finite flat over~$\OL$, which we now assume.
 Since~$C$ is~$\OL$-flat,
it suffices to show that~$C[1/\varpi] = A[1/\varpi] \otimes_E B[1/\varpi]$ is reduced. However,
this follows
from~\cite{bourbaki},  Chap.~V~\S15.5; Theorem~$3$(d)).
\end{proof}

\subsection{Modular Curves} \label{section:stacks}
Let  $N \ge 5$, and 
let $X_H(N) = X(\Gamma_H(N))$ denote the quotient of $X_1(N)$ by the Sylow $p$-subgroup of $(\Z/N\Z)^{\times}$
considered as a smooth
proper scheme over $\Spec(\OL)$~\cite{deligne-rapoport}. To be precise, the curve~$X_H(N)$
is a fine moduli space providing that either~$p \ge 5$ or~$p = 3$ and~$N$ divisible by
a prime~$q \ge 5$ such that $q \equiv -1 \mod 3$.
This follows  either from the computation of stabilizers at the CM points
(as in~\cite{MazurEisenstein},~\S2, p.64), or, in the second case, because~$X_H(N)$ is a cover of~$X_1(q)$.
If~$p = 3$ and~$X_H(N)$ is not a fine moduli space, 
 we simply add a prime~$q \ge 5$ and~$q \equiv -1 \mod 3$ to~$S$
such that~$\rhobar$ is unramified at~$q$.
If~$Q$ is a collection of auxiliary primes disjoint from~$N$, let~$X_H(\NQ)$ denote a quotient
of~$X_1(N)$ by the~$p$-Sylow subgroup of~$(\Z/N\Z)^{\times}$ 
and some subgroup of~$(\Z/Q \Z)^{\times}$.  (In practice, the cokernel of the corresponding
subgroup of~$(\Z/Q\Z)^{\times}$ will be a~$p$-group.)

\subsection{Hecke algebras}
Let $\omega$  be
 the usual pushforward $\pi_{*} \omega_{\mathcal{E}/X_H(N)}$ of the relative dualizing sheaf
along the universal generalized elliptic curve. If $A$ is an $\OL$-module, then 
let $\omega^n_A = \omega^n \otimes_{\OL} A$.
The (Katz) space of modular
forms of weight $k$ and level $N$ is defined to be 
$H^0(X_H(N),\omega^k_{A})$.

\medskip

We shall now consider a number of Hecke algebras, and discuss the relationship between them.
Our coefficient ring or module will either be  $A = \OL$, $A = E = \OL \otimes \Q$,  $A = \OL/\varpi = k$, $A = \OL/\varpi^n$,
or $A = E/\OL$ unless otherwise specified.

\begin{df} The  Hecke algebra $\T_A$ in weight~$k$ is the $A$ sub-algebra of 
$$\End_A(H^0(X_H(N),\omega^k_{A}))$$
generated by the operators $T_n$ for $n$ prime to $p$ and diamond operators $\langle d \rangle$
for $d$ prime to~$N$.
\end{df}

Note that this definition includes  the operators~$T_{\ell}$ for ${\ell}|N$. 
These operators can also be denoted by~$U_{\ell}$ (which is what we shall do below).
We now define a variant of these Hecke algebras where we include the Hecke operator at~$p$.
\begin{df} Let $\Tw_A$ denote the ring $\T_A$  together with the operator $T_p$.
\end{df}

Note that a maximal ideal $\m$ of $\T$ need no longer  a priori be maximal in $\Tw$.
That is,~$\Tw_{\m}$ will not always be a local ring. However, it will always be a semi-local ring,
that is, a direct sum of finitely many local rings.

\subsection{Hecke algebras at auxiliary level~$Q$}
Let $Q$ be a finite collection of primes congruent to $1 \mod p$ and distinct from $N$. 
Let $\T_{Q,A}$ denote the~$\OL$-algebra generated by Hecke operators away
from~$p$ 
acting
at level $X_H(\NQ)$ with coefficients in~$A$ together with diamond operators~$\langle d \rangle$
for $d$ prime to~$N$,
 and let~$\Tw_{Q,A}$ denote~$\T_{Q,A}$ together with the operator~$T_p$.

\medskip

Suppose that $\rhobar$ is a representation such  that the modified deformation ring $R$ is non-zero -- equivalently, that $D(k) \ne 0$.
Recall that~$N$ is equal to  the conductor of~$\psi$ times the primes in some auxiliary set~$S$ which includes (but may be
larger than)
the  set of primes~$\ell \nmid p$ where~$\rhobar | I_{\ell}$ is non-trivial and unipotent. In particular, if~$\ell$ divides~$S$,
then~$\ell$ divides~$N$ exactly once.
By Serre's Conjecture~\cite{KhareW}, any such $\rhobar$ is modular of level $N(\rhobar) | N$ and
weight~$p$, so we now specialize to the case of weight~$p$,   and let~$\T_{Q} = \T_{Q,\OL}$. 
Let $\wm$ be a maximal ideal of $\Tw_{Q}$  corresponding to~$\rhobar$ (and to a choice of~$a_{\ell}$
for all~$\ell$ dividing~$N$,~$Q$, and~$p$).
Let us also suppose that for every prime~$\ell$ dividing~$Q$, the matrix~$\rhobar(\Frob_{\ell})$
has distinct eigenvalues (since this is an assumption in part~\ref{item:Q} of the definition of~$D_Q$).

\begin{prop} \label{prop:action} 
There exists a deformation
$$\rho_Q: G_{\Q} \rightarrow \GL_2(\Tw_{Q,\wm})$$
of $\rhobar$ unramified outside $p\NQ$ such that~$\rho(\Frob_{\ell}) = T_{\ell}$ for~$\ell$ prime 
to~$p \NQ$. Let~$\rho'_Q  = \rho_{Q} \otimes \eta$,
where~$\eta^2 = \psi \cdot \eps^{p-1}  \cdot \det(\rho_Q)^{-1}$. Then~$\rho'_{Q}$ is a deformation
of~$\rhobar$ in~$D_Q(\Tw_{Q,\wm})$. In particular, there is a corresponding map
$$R_Q \rightarrow \Tw_{Q,\wm}$$
sending $\Tr(\rho^{\mathrm{univ}}(\Frob_{\ell})) \in R_Q$ to $\eta(\ell) \cdot T_{\ell}$
for $\ell$ not dividing $p\NQ$,  sending $\alpha_{\ell}$ to $\eta(\ell) \cdot U_{\ell}$ for $\ell$ dividing $NQ$, and sending 
$\alpha_p + p^{\expp-1} \psi(p) \alpha^{-1}_p$ to $\eta(p) \cdot T_p$, or equivalently, $\alpha_p$ to the unit root of
$$X^2 - \eta(p) \cdot  T_p X + \psi(p) p^{\expp-1} = 0,$$
which lies in $\Tw_{Q,\wm}$ by Hensel's Lemma.
\end{prop}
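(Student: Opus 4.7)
The plan is to assemble $\rho_Q$ from the Galois representations attached to classical weight $p$ eigenforms whose systems of Hecke eigenvalues appear in $\Tw_{Q,\wm}$, twist this representation to correct the determinant, and then verify each local deformation condition prime-by-prime before invoking the universal property of $R_Q$.

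\emph{Construction of $\rho_Q$ and the twist $\eta$.} The finite-dimensionality of $H^0(X_H(\NQ),\omega^p)$ over $\OL$ makes $\Tw_{Q,\wm}$ a complete local Noetherian $\OL$-algebra of finite type as an $\OL$-module. Inverting $\varpi$ decomposes it as a product of number fields, one per Galois orbit of weight $p$ eigenforms congruent to $\rhobar$ at $\wm$, and Deligne's construction gives on each factor a Galois representation unramified outside $p\NQ$ with Frobenius trace $T_\ell$. Absolute irreducibility of $\rhobar$ lets one glue these into $\rho_Q: G_\Q \to \GL_2(\Tw_{Q,\wm})$ via the pseudo-representation / Carayol--Nyssen formalism. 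The character $\psi \eps^{p-1} \det(\rho_Q)^{-1}$ of $G_\Q$ is trivial modulo $\wm$ (since the nebentypus of each relevant eigenform is congruent to $\overline{\psi}$ and the weight contributes $\eps^{p-1}$), so it takes values in the pro-$p$ group $1 + \wm$. Since $p > 2$, squaring is a bijection on $1 + \wm$, yielding a unique $\eta$ with $\eta^2 = \psi \eps^{p-1} \det(\rho_Q)^{-1}$ and $\eta \equiv 1 \bmod \wm$. Set $\rho'_Q = \rho_Q \otimes \eta$, so that $\det \rho'_Q = \psi \eps^{p-1}$ and $\rho'_Q$ is unramified outside $p\NQ$.

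\emph{Verifying local conditions.} At a prime $\ell \in P$, the ramification of $\psi|_{D_\ell}$ forces $\rho'_Q|_{D_\ell}$ to split as a sum of two characters with exactly one unramified, and matching its Frobenius eigenvalue with $\eta(\ell) U_\ell$ is the usual local-global compatibility at a prime of principal-series type. At $\ell \in S$, each classical eigenform appearing in $\Tw_{Q,\wm}$ gives an $E$-point of the modified deformation ring $\Rmod_\ell$ of Lemma~\ref{lemma:gee}; because $\Rmod_\ell$ is reduced and $\OL$-flat, these $E$-points of the various minimal primes assemble into an honest quotient map $\Rmod_\ell \to \Tw_{Q,\wm}$ pinning down $\alpha_\ell = \eta(\ell) U_\ell$. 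At $\ell \in Q$, the hypotheses $\ell \equiv 1 \bmod p$ and distinct eigenvalues of $\rhobar(\Frob_\ell)$ force tame ramification through a split direct sum of two characters, and Hensel's lemma isolates the one with Frobenius value $\equiv a_\ell$. Finally at $\ell = p$, the choice of $a_p$ recorded in $\wm$ picks out the ordinary locus, so Deligne's representation is ordinary with unramified quotient acted on by the unique unit root $\alpha_p$ of $X^2 - \eta(p) T_p X + \psi(p) p^{\expp-1}$, extracted from the simple root $a_p \ne 0$ of its reduction via Hensel's lemma.

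\emph{Producing the map and the main obstacle.} Once $\rho'_Q$ together with the collection $(\alpha_\ell)_{\ell \in N \cup Q}$ and $\alpha_p$ is shown to define a point of $D_Q(\Tw_{Q,\wm})$, the universal property of $R_Q$ produces the claimed ring homomorphism $R_Q \to \Tw_{Q,\wm}$, and the asserted images of generators on traces and on the $\alpha$'s read off directly from the construction. The principal obstacle is the verification at $\ell \in S$: individual classical eigenforms manifestly satisfy the Steinberg-or-unramified dichotomy, but one must transport this generic-fibre information into the integral Hecke algebra, which is exactly what the reduced $\OL$-flat structure of $\Rmod_\ell$ from Lemma~\ref{lemma:gee} makes possible. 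The other cases are routine by comparison: the $\ell \in P$ case is rigidified by the ramification of $\psi$, the $\ell \in Q$ case by the split-tame form of the local representation, and the $\ell = p$ case by the ordinariness encoded in $\wm$.
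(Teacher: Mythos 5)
Your construction of $\rho_Q$ and of the twist $\eta$, and your treatment of the primes in $P$, in $Q$, and at $p$, all match the paper's argument. But there is a genuine gap at exactly the place you flag as the principal obstacle, namely $\ell \in S$, and your proposed fix does not work. The difficulty is that $\Tw_{Q,\wm}[1/\varpi]$ is \emph{not} in general a product of fields: for a newform $f$ of level $\NQ/D$ with $\ell \mid D$ and $\rho_f(\Frob_\ell)$ having a repeated eigenvalue $b_\ell$, Atkin--Lehner--Li theory shows that $U_\ell$ acts on the two-dimensional space of associated oldforms by a non-diagonalizable matrix, so $(U_\ell - b_\ell)^2 = 0$ with $U_\ell \ne b_\ell$, and the Hecke algebra acquires nilpotents after inverting $\varpi$. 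Consequently, knowing that every \emph{field-valued} point of $\Tw_{Q,\wm}$ satisfies the local condition does not show that the map $\Rtwuniv_{\ell} \rightarrow \Tw_{Q,\wm}$ factors through $\Rmod_{\ell}$: reducedness and $\OL$-flatness of the \emph{source} $\Rmod_{\ell}$ is not what is needed here (that only says $\Rmod_{\ell}$ is cut out by its $F$-points); one would need reducedness of the \emph{target}, which fails. Your parenthetical claim that inverting $\varpi$ decomposes the Hecke algebra into number fields is precisely what breaks.

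The paper closes this gap with Corollary~\ref{corr:non}: a surjection $x: \Rtwuniv_{\ell} \rightarrow E[\eps]/\eps^2$ whose underlying Galois representation is unramified, with $\Runiv_{\ell}$ mapping onto $E$, still factors through $\Rmod_{\ell}$, because it factors through the unramified double cover $\RR^{\unr,\psi}_{\ell}$ built into the construction of Lemma~\ref{lemma:gee}. Applied to the Artinian local factors of $\Tw_{Q,\wm}[1/\varpi]$ coming from oldforms with repeated Frobenius eigenvalues, this is what licenses sending $\alpha_{\ell} \mapsto \eta(\ell) U_{\ell}$. A secondary omission in the same vein: you never verify that $\eta(\ell) U_{\ell}$ satisfies the characteristic polynomial of $\rho'_Q(\Frob_{\ell})$ inside $\Tw_{Q,\wm}$, which is needed even to produce a map out of $\Rtwuniv_{\ell}$; the paper reads this off from the explicit matrix $\left(\begin{smallmatrix} \Tr(\rho_f(\Frob_{\ell})) & \langle \ell \rangle \ell^{p-1} \\ -1 & 0 \end{smallmatrix}\right)$ giving the $U_{\ell}$-action on oldforms, whose characteristic polynomial coincides with that of $\rho_f(\Frob_{\ell})$.
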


 This proposition is (mostly) an exercise in Atkin--Lehner--Li theory.
Indeed, if one assumes that the action of~$U_{\ell}$ on forms of level~$\ell \| N$ is semi-simple
(which conjecturally is always the case), then the space of modular forms under consideration
will decompose into a direct sum of eigenforms for all the Hecke operators in~$\Tw_{Q,\m}$,
and then the claim follows immediately from known local-global compatibility
for classical modular forms. (The only local--global compatibility we require is given
by Theorem~3.1 of~\cite{Fermat}.)
In practice, we have to allow for the possibility that~$U_{\ell}$
may not act semi-simply, although this is not difficult.

\begin{proof}
The space of modular forms of weight~$p$ is torsion free, so the Hecke algebra
is determined by its action on
$$H^0(X_H(\NQ),\omega^p)_{\wm} \otimes E.$$
It suffices to prove the proposition after further decomposing this space into a  direct sum of~$\Tw_{Q,\wm}$-modules.
Enlarging~$E$ if necessary, we may assume that all the eigenvalues of
all Hecke operators at level dividing~$\NQ$ are defined over~$E$. 
Let~$\Tan_Q$ denote the anaemic Hecke algebra consisting of 
endomorphisms of~$H^0(X_H(\NQ),\omega^p)$ generated by Hecke operators~$T_n$ for~$n$ prime
 to~$p\NQ$
and diamond operators~$\langle d \rangle$ for~$d$ prime to~$\NQ$.  There is a map~$\Tan_Q \rightarrow \Tw_Q$; 
let~$\m \subset \Tan_Q$ denote the inverse of the maximal ideal~$\wm$ 
(which is determined by~$\rhobar$).
Note that~$\m$ may correspond to several~$\wm$ in~$\Tw_{Q}$; the possible~$\wm$ are indexed by the possible
choices of~$a_{\ell}$ for~$\ell$ dividing~$p\NQ$. In any event, there will always be an inclusion:
$$H^0(X_H(\NQ),\omega^p)_{\wm} \otimes E \subset 
H^0(X_H(\NQ),\omega^p)_{\m} \otimes E.$$
(This would be an equality if we replaced the left hand side with a direct sum over all~$\wm$
which pull back to~$\m$.)
The space~$H^0(X_H(\NQ),\omega^p) \otimes E$ decomposes under~$\Tan_{Q}$ into eigenspaces indexed by 
newforms~$f$ of level dividing~$\NQ$. 
Associated to a cuspidal newform~$f$ is a Galois representation~$\rho_f$.
In particular, combining all these Galois representations over~$f$ with~$\rhobar_f = \rhobar$,
 we obtain a Galois
representation
$$\rho: G_{\Q} \rightarrow \GL_2(\Tan_{Q,\m} \otimes E).$$
Because the traces of Frobenius elements lie in~$\Tan_{Q,\m}$, and
because~$\rhobar$ is absolutely irreducible, we may take the image of this
Galois representation to land in~$\GL_2(\Tan_{Q,\m})$ by~$(2.6)$ of~\cite{Lenstra}.
To this point, we have simply reconstructed the usual construction of the Galois
representation into the (anaemic) Hecke algebra.

\medskip

Let~$\rho_Q$ denote the Galois representation induced by composing this
with the image of~$\Tan_{Q,\m}$ in~$\Tw_{Q,\wm}$. This will be the~$\rho_{Q}$
of the proposition. The reason for the twist by~$\eta$
is to match the determinant with the required determinant for the functor~$D_Q$.
The main point of this proposition is to show that
that the extra old forms associated to~$f$ (with their concomitant actions of~$U_{\ell}$)
contain exactly the extra information needed to obtain a  modified deformation of~$\rhobar$
of type~$D_Q$.
The eigenspaces  corresponding to~$f$ will  contribute to the localization at~$\m$ if
and only if~$\rhobar_{f} = \rhobar$.
In particular, the level of~$f$ must be divisible by the Serre conductor of~$\rhobar$, and hence the level of~$f$ is
of the form~$\NQ/D$, where~$D$ is only divisible by primes dividing either~$S$ or~$Q$.
Hence  the integer~$D$ is square-free and prime to~$\NQ/D$.
Suppose that~$D$ has~$d$ prime divisors.
The form~$f$  generates a space of~$2^d$ oldforms of level~$\NQ$
consisting of~$f = f(q)$ together with the forms~$f(q^m)$ for~$m|D$.
By Atkin--Lehner-Li theory (Theorem~9.4 of~\cite{Stein})  this exhausts the entire space of
oldforms associated to~$f$
which appear in~$H^0(X_H(\NQ),\omega^p)_{\m} \otimes \Q$. 
Let us now describe the action of~$U_{\ell}$ on these spaces for~$\ell$ dividing~$D$.
 Again by Atkin--Lehner, this is given as the
tensor product over~$\ell | D$ of a two-dimensional space on which~$U_{\ell}$ acts by the matrix
$$\left( \begin{matrix} \Tr(\rho_f(\Frob_{\ell})) & \langle \ell \rangle \ell^{p-1} \\
-1 & 0 \end{matrix} \right).$$
Here~$\Tr(\rho_f(\Frob_{\ell}))$ may also be identified with the eigenvalue of~$f$ under the Hecke operator~$T_{\ell}$ acting at level~$\NQ/D$.
The element~$ \Tr(\rho_f(\Frob_{\ell}))$ will lie in the image of~$\Tan_{Q}$ by the Cebotarev density theorem.
Note that the eigenvalues of this matrix are precisely
the eigenvalues of~$\rho_f(\Frob_{\ell})$.
There are now two possibilities:
\begin{enumerate}
\item The eigenvalues of~$\rho_f(\Frob_{\ell})$ are distinct.
In this case, the space of oldforms over~$E$ decomposes further into eigenspaces under~$U_{\ell}$. The eigenvalues of~$U_{\ell}$ will correspond
precisely to Galois representations together with a choice of eigenvalue of~$\rho_f(\Frob_{\ell})$.
Each choice of eigenvalue will contribute to the localization at~$\wm$ if and only if the corresponding eigenvalue
 is~$a_{\ell} \mod \varpi$.
 After the global twist to match determinants,
 such representations will naturally be algebras over~$\Rmod_{\ell}$, where~$\alpha_{\ell}$ is sent 
 to~$\eta(\ell) \cdot U_{\ell}$ (this follows by the construction of the rings~$\Rmod_{\ell}$,
 in particular Lemma~\ref{lemma:gee} for unramified primes of type~$S$.
\item The eigenvalues of~$\rho_f(\Frob_{\ell})$ are equal.  Call the unique eigenvalue~$b_{\ell}$.
From the explicit matrix description of the action of~$U_{\ell}$
above, we see that~$U_{\ell}$ is not a multiple of the scalar matrix, and so
it is not diagonalizable. In particular, in the Hecke algebra, the operator~$U_{\ell}$ satisfies the relation~$(U_{\ell} - b_{\ell})^2 = 0$.
However, once again (after twisting), there will be
 a map from~$\Rmod_{\ell}$ sending~$\alpha_{\ell}$ to~$\eta(\ell) 
\cdot U_{\ell}$, by Corollary~\ref{corr:non} (the eigenvalues can
only be the same for primes of type~$S$).
\end{enumerate}

We remark that the second case above conjecturally never occurs in weight~$\ge 2$
(see~\cite{Coleman}).

Let us now consider the operators~$U_{\ell}$ for~$\ell$ not dividing~$D$. In this case,
the Galois representation~$\rho_f$ is ramified at~$\ell$, and local--global compatibility of Galois representations
implies that, after twisting by~$\eta$, the Galois representation has an unramified quotient on which~$\Frob_{\ell}$ acts via~$U_{\ell}$,
and hence we have a natural map from~$\Rmod_{\ell}$ to~$\Tw_{Q,\wm}$ sending 
(after twisting)~$\alpha_{\ell}$ to~$\eta(\ell) \cdot U_{\ell}$. 
Finally, since (by definition)~$a_p \in k$ is a unit (it is an eigenvalue of an invertible matrix), the representation~$\rho_f$
is ordinary at~$p$, and action of Frobenius on the Galois
representations associated to any form~$f$ will  admit an unramified at~$p$ quotient on which~$\Frob_p$
acts as~$U_p$. Hence there will be a natural map from~$\Rmod_p$ to~$\Tw_{Q,\wm}$ sending~$\alpha_p$ to~$\eta(p) \cdot U_p$,
which is related to $T_p$ via the equation $U^2_p - T_p U_p + 
\langle p \rangle p^{\expp-1} = 0$. 
\end{proof}

\begin{remark} \emph{The theorem above is true in any weight~$k \ge 2$, providing that one modifies the
definition of~$D_Q$ to take into account the weight, and one still works in the ordinary context (so~$a_p \in k^{\times}$).}
\end{remark}

\subsection{Modularity lifting theorems in weight~\texorpdfstring{$p$}{p}}
\label{section:higher}

The main goal of this section is to prove the following:

\begin{theorem} \label{theorem:minimal} There is an isomorphism
$R_Q \rightarrow \Tw_{Q,\wm}$.
\end{theorem}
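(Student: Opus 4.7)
The plan is to prove Theorem~\ref{theorem:minimal} by Taylor--Wiles--Kisin patching, applied to the ordinary deformation problem $D_Q$ in weight $p \geq 3$. Proposition~\ref{prop:action} already supplies a surjection $R_Q \twoheadrightarrow \Tw_{Q,\wm}$, so the work is to bound $R_Q$ from above. Since the local condition at $p$ is ordinary and $\rhobar$ is modular by Serre's conjecture~\cite{KhareW}, all the ingredients of a standard Kisin-style modularity lifting theorem are available; only the modified nature of the local deformation rings at primes of $S$ and $p$ requires care, and Proposition~\ref{prop:CM} together with Lemma~\ref{lemma:kisin} guarantee that $\Rloc := \widehat{\bigotimes}_{\ell \in N \cup \{p\}} \Rmod_\ell$ remains $\OL$-flat, reduced, and equidimensional of the expected dimension.

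First, for each $n \geq 1$, I would use Poitou--Tate duality and Chebotarev to choose a set $Q_n$ of Taylor--Wiles primes disjoint from $NQ \cup \{p\}$, of cardinality $r$ independent of $n$, with each $\ell \in Q_n$ satisfying $\ell \equiv 1 \pmod{p^n}$, with $\rhobar(\Frob_\ell)$ having distinct eigenvalues, and with the dual Selmer group attached to $D_{Q \cup Q_n}$ vanishing. The usual Galois-cohomological count then produces surjections $\Rloc \llbracket x_1, \dots, x_g \rrbracket \twoheadrightarrow R_{Q \cup Q_n}^{\square}$ with $g$ independent of $n$, where the framing is performed over primes in $N \cup \{p\}$. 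On the Hecke side, let $M_n$ be the framed analogue of the direct sum (over maximal ideals $\wm_n$ lifting $\wm$) of the localizations $H^0(X_H(NQQ_n), \omega^p)_{\wm_n}$, which by Proposition~\ref{prop:action} is naturally a module over $R_{Q \cup Q_n}^{\square}$ and which also carries a commuting action of $S_n = \OL\llbracket \Delta_n \rrbracket$ via diamond operators, where $\Delta_n$ is the $p$-part of $\prod_{\ell \in Q_n} (\Z/\ell\Z)^{\times}$.

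Standard ultrafilter patching then yields $R_\infty = \Rloc \llbracket x_1, \dots, x_g \rrbracket$, $S_\infty = \OL\llbracket y_1, \dots, y_r \rrbracket$, a structural map $S_\infty \to R_\infty$, and a patched module $M_\infty$, with $\dim R_\infty = \dim S_\infty$ by construction. The endgame is standard: if $M_\infty$ is finite free over $S_\infty$, then it is maximal Cohen--Macaulay over the reduced equidimensional ring $R_\infty$; if in addition $M_\infty$ is supported on every irreducible component of $\Spec R_\infty$, then $R_\infty$ acts faithfully and coincides with its image $\Tw_\infty$ in $\End_{S_\infty}(M_\infty)$. Quotienting by the augmentation ideal of $S_\infty$ then recovers $R_Q \cong \Tw_{Q,\wm}$.

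The hard part is verifying these two structural properties of $M_\infty$. Freeness over $S_\infty$ reduces to freeness of each $M_n$ over $\OL[\Delta_n]$: since $\rhobar$ is absolutely irreducible, one identifies $H^0(X_H(NQQ_n), \omega^p)_{\wm_n}$ with a direct summand of a suitable \'etale cohomology group on which the diamond operators at $Q_n$ act through the deck transformations of an \'etale $\Delta_n$-cover, so the usual TW argument for group cohomology applies. Full support is the more substantive input and is the reason one works in weight $p$ rather than weight one in this auxiliary step: it follows from Hida theory combined with the fact (Serre's conjecture plus classical ordinary modularity lifting) that every ordinary deformation of $\rhobar$ is modular, so $M_\infty$ meets every component of $\Spec R_\infty[1/p]$. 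The compatibility of the Hecke action with the $\Rmod_\ell$-structure at primes of $S$ has already been arranged by the Atkin--Lehner--Li analysis of Proposition~\ref{prop:action} together with Lemma~\ref{lemma:gee}.
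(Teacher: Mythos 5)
Your overall architecture (Taylor--Wiles primes, patching weight-$p$ coherent cohomology, full support from classical ordinary modularity lifting) matches the paper, but your endgame has a genuine gap, and it is precisely the gap this theorem exists to close. You argue: $M_\infty$ free over $S_\infty$, hence maximal Cohen--Macaulay over $R_\infty$; full support, hence $R_\infty$ acts faithfully and equals its image $\Tw_\infty$ in $\End_{S_\infty}(M_\infty)$; ``quotienting by the augmentation ideal of $S_\infty$ then recovers $R_Q\cong\Tw_{Q,\wm}$.'' That last step fails: faithfulness is not preserved under quotients. Faithfulness of $M_\infty$ over $R_\infty$ gives a surjection $R_\infty/\aa\twoheadrightarrow \Tw_{Q,\wm}$ (after removing frames, $R_Q\twoheadrightarrow\Tw_{Q,\wm}$) whose kernel annihilates $M_\infty/\aa$, but nothing forces that kernel to vanish. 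Since $R_\infty=\Rloc\llbracket x_1,\dots,x_g\rrbracket$ is not regular (the local rings at primes of $S$ and at $p$ are singular), maximal Cohen--Macaulay does not imply locally free, and without freeness the faithfulness does not descend. Your argument therefore only reproves the already-known statement $(R_Q[1/\varpi])^{\red}\simeq(\Tw_{Q,\wm}[1/\varpi])^{\red}$; the entire content of the theorem is the integral upgrade, as the paper's discussion preceding the proof emphasizes.

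The missing ingredient is multiplicity one in weight $p$ via $q$-expansions (Lemma~\ref{lemma:count}): $\dim H^0(X,\omega^{p}_k)[\wm]=1$ because the \emph{full} Hecke algebra $\Tw$ (including $U_\ell$ for $\ell\mid NQ$ and $T_p$) determines the $q$-expansion. Hence each finite-level module is free of rank one over $\Tw_{Q\cdot Q_n,\wm}$ --- over the Hecke algebra, not merely over the diamond operators $\OL[\Delta_n]$, which is all your \'etale-cover argument addresses. Patching rank-one freeness is tautological, so $M_\infty$ is free of rank one over the patched Hecke algebra $\Tw_\infty$, which is a quotient of $R_\infty$. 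Your full-support input (the paper instead patches Betti cohomology simultaneously and transfers Kisin's near-faithfulness to $\Rmod$ via the inclusion of components of generic fibres) then shows the kernel of $R_\infty\to\Tw_\infty$ is nilpotent, hence zero since $\Rloc$ is reduced (Proposition~\ref{prop:CM}, Lemma~\ref{lemma:kisin}). Thus $M_\infty$ is free of rank one over $R_\infty$ itself, and freeness, unlike faithfulness, descends through $\otimes_{S_\infty}S_\infty/\aa$, yielding the integral isomorphism. A secondary caveat: your integral identification of $H^0(X_H(NQQ_n),\omega^p)_{\wm}$ with a direct summand of \'etale cohomology is delicate (the Hodge filtration does not split integrally); the paper avoids this by patching the coherent and Betti modules separately and comparing them only through the Hecke algebras acting on each.
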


Before proving this theorem, we remark that the ``modularity'' theorem
one can deduce from this~$R = \T$ theorem is already well known. In particular,
one knows that~$(R_Q[1/\varpi])^{\mathrm{red}} =  (\Tw_{Q,\wm}[1/\varpi])^{\mathrm{red}}$
(our Hecke algebras will not be reduced if the action
of~$U_{\ell}$  is not semi-simple). Hence the content of this theorem
is to upgrade this known result to an integral statement. In order to see how one might do this,
note that the modifications of Taylor--Wiles due to Diamond, Kisin, and others  
(\cite{DiamondMult},\cite{kisin})
proceed
by constructing  a patched module~$M_{\infty}$ over a patched deformation ring~$R_{\infty}$
and a ring of auxiliary diamond operators~$S_{\infty}$.
By hook or by crook, one tries to prove that~$M_{\infty}$ is faithful (or nearly faithful) as 
an~$R_{\infty}$-module. To recover a classical statement, one takes the quotient
of~$R_{\infty}$ and~$M_{\infty}$ by
the augmentation ideal~$\aa$ of~$S_{\infty}$, and recovers the classical ring~$R$ and a module~$M$
of classical modular forms
on which~$R$ acts via the quotient~$\T$.
An essential difficulty, however, is that even if one knows that~$M_{\infty}$ is faithful as an~$R_{\infty}$-module, this does not imply that~$M_{\infty}/\aa$ is faithful as an~$R_{\infty}/\aa$-module; that is, faithfulness is 
not preserved under quotients. Hence these methods often only allow one to deduce
weaker statements concerning reduced quotients.
In Wiles' original arguments, however, the auxiliary modules~$M_D$ are free over the corresponding
Hecke algebras, and one ultimately deduces that the patched module~$M_{\infty}$  is also
free over~$R_{\infty}$,
from which one can certainly conclude that~$M_{\infty}/\aa$ is free over~$R_{\infty}/\aa$,
and hence that~$R = \T$. In our argument, we exploit the fact that,
by using all the Hecke operators, 
 the multiplicity one theorem for~$q$-expansions  allows us to also
 show that the auxiliary modules~$M_D$ are free 
over certain Hecke algebras,  and hence we are able to deduce (as in Wiles)
an integral~$R = \T$ theorem.

\begin{remark} \emph{{\bf An apology concerning notation:\rm} The notation~$Q$ that we have
used is meant to suggest a collection of Taylor--Wiles primes. Indeed, 
 the primes denoted by~$Q$ will play the role of Taylor--Wiles primes in the modularity
proof of~\S\ref{section:one}. However, in the proof
of Theorem~\ref{theorem:minimal} below, the set of primes~$Q$  will be fixed,
and there will be an auxiliary choice~$\QQ_D$ of Taylor--Wiles primes~$x \equiv 1 \mod p^D$. Explicitly,
we are proving an~$R = \T$ theorem at level~$p$ and a level which already
includes a fixed collection of Taylor--Wiles primes~$Q$. Hence we require a second auxiliary choice
of Taylor--Wiles primes for which we use the letter~$\QQ$ rather than~$Q$.}
\end{remark}

\begin{proof}
We first define a classical unmodified (``natural'') deformation ring $\Rn_{Q}$ which records deformations
which are of the same type as considered in $R_{Q}$, except  now the extra choice of
eigenvalues is omitted, 
as is the choice of eigenvalue at $\ell = p$. 
There is a natural isomorphism
$$R_{Q} \simeq \Rn_{Q} \otimes_{\Rloc} \Rlocmod,$$
where
$\displaystyle{\Rloc:=\widehat{\bigotimes}_{\ell | p \NQ} R_{\ell}}$
denotes the corresponding local deformation rings for $\Rloc$, and 
$$\Rlocmod:=\widehat{\bigotimes}_{\ell | p \NQ} \Rmod_{\ell}.$$
We remind the reader that one should think about the $\Rloc$-algebra~$\Rmod$ as follows: it 
is the algebra obtained by including the extra information over~$\Rloc$ coming from
a choice of Frobenius eigenvalue, and taking a localization of this ring corresponding
to fixing (residually) a choice of such an eigenvalue. In particular, the set of
components of the generic fibre
of~$\Rmod$ is a subset of the components of the generic fibre of~$\Rloc$.
The ring~$\Rmod$ is also reduced by Lemmas~\ref{prop:CM} and~\ref{lemma:kisin}.

We now patch together
coherent cohomology modules  and we also simultaneously
patch Betti cohomology.  
Namely,  we patch the pairs of modules 
$$M^{\co}_{D} = 
H^0(X_{H_D}(\NQ \cdot \QQ_D),\omega^{\otimes p}_{E/\OL})^{\vee}_{\wm},$$
$$M^{\be}_{D} = H^1(X_{H_D}(\NQ \cdot \QQ_D),\Sym^{p-2}((E/\OL)^2))^{\vee}_{\m}.$$
The notation $\co$ and $\be$ refers
 to  coherent and Betti cohomology, respectively.
Here~$\QQ_D$ is a collection of Taylor--Wiles primes~$x$ (distinct from primes
dividing~$\NQ$)  such that~$x \equiv 1 \mod p^D$,
and~$H_D$ is the subgroup of~$(\Z/\NQ \cdot \QQ_D \Z)^{\times}$ generated
by  the kernel of the
map~$(\Z/\QQ_D\Z)^{\times} \rightarrow (\Z/p^D \Z)^{\# \QQ_D }$ together with a fixed
subgroup of~$(\Z/\NQ \Z)^{\times}$.
The first module has a faithful action of~$\Tw_{Q \cdot \QQ_D,\wm}$, and the second
has a faithful action of~$\T_{Q \cdot \QQ_D,\m}$. Moreover, the tensor product
$M^{\be}_{D} \otimes_{\Rloc} \Rmod$ has a natural action of~$\Tw_{Q \cdot \QQ_D,\wm}$.
We patch together both of these modules for the following reason. The patched Betti cohomology
module is known to be nearly faithful over the patched framed natural Galois deformation
rings  $\Rloc\llbracket  x_1,\ldots,x_{r+d-1}\rrbracket$ by a theorem of Kisin~\cite{kisin}  --- this essentially
amounts to the fact that we already \emph{have} modularity lifting theorems in this context; the goal
is to upgrade these theorems to integral statements.
On the other hand, the coherent cohomology will be free over the corresponding modified
Hecke rings, which allows for an easier passage from patched objects back to finite level.

By Lemma~\ref{lemma:count}, the modules $M^{\co}_{D}$ are free of rank one over~$\Tw_{Q \cdot \QQ_D,\wm}$,
and so $M^{\co}_{D}/\varpi^D$ is free of rank one over~$\Tw_{Q \cdot \QQ_D,\wm}/\varpi^D$. On the other hand,
 $M^{\be}_{D}$ need not be free. However, the action of  the Hecke on 
 $M^{\be}_{D}/\varpi^D$ certainly factors through~$\T_{Q \cdot \QQ_D,\m}/\varpi^D$, and the action of
 the full Hecke algebra (with Hecke operators for primes dividing~$N$)  on $M^{\be}_{D}/\varpi^D \otimes_{\Rloc} \Rmod$ factors 
 through~$\Tw_{Q \cdot \QQ_D,\wm}/\varpi^D$.
 
 If we patch together all this data simultaneously (together with framings), we deduce (as in the
 pre-Diamond argument of Wiles) that the patched framed module $M^{\square,\co}_{\infty}$
is free of rank one over~$\Tw^{\square}_{\infty}$
(this is tautological, because each of the modules that is being patched will be free), 
and that the action
of the patched modified Galois deformation ring on $M^{\square,\be}_{\infty} \otimes_{\Rloc} \Rmod$ acts 
through~$\Tw^{\square}_{\infty}$.
If $\Rbox_{Q}$ denotes the framed version of $R_{Q}$ over $d$ primes dividing $p \NQ$, then the Taylor--Wiles  method as modified by Kisin
gives  presentations:
$$R_{Q}\llbracket  T_1,\ldots,T_{4d-1}\rrbracket \simeq
\Rbox_{Q} \simeq \Rmod\llbracket  x_1,\ldots,x_{r+d-1}\rrbracket/(f_1,\ldots,f_{r+1}),$$
$$\Rn_{Q}\llbracket   T_1,\ldots,T_{4d-1}\rrbracket \simeq
\Rboxn_{Q} \simeq \Rloc\llbracket  x_1,\ldots,x_{r+d-1}\rrbracket/(f_1,\ldots,f_{r+1}).$$
From~\cite{kisin} we know that~$M^{\square,\be}_{\infty}$ is a nearly faithful
$\Rloc\llbracket  x_1,\ldots,x_{r+d-1}\rrbracket$-module. From the freeness of~$M^{\co}$
over the Hecke algebra, it follows that~$\Tw^{\square}_{\infty}$ is a quotient
of the power series ring~$\Rmod \llbracket x_1,\ldots, x_{r+d-1} \rrbracket$.
 If~$\Tw^{\square}_{\infty}$
 is actually isomorphic to this ring,
then by taking the quotient by diamond operators, we arrive at the required
 isomorphism~$R_{Q} \simeq \Tw_{Q,\wm}$.
 On the other hand, the components of the generic fibre of $\Rmod \llbracket  x_1,\ldots,
 x_{r+d-1}\rrbracket$ are a subset of those for~$\Rloc$, so we deduce (cf. Lemma~2.3 of~\cite{Taylor}) that~$\Tw^{\square}_{\infty}$
 is a nearly faithful~$\Rmod \llbracket  x_1,\ldots,
 x_{r+d-1}\rrbracket$-module. Since~$\Rmod$ is reduced and Noetherian, the power series
 ring has no nilpotent elements, and hence being nearly faithful over this ring is equivalent
 to being faithful. Thus~$\Tw^{\square}_{\infty}$ is a faithful module, and
 hence isomorphic to~$\Rmod \llbracket x_1,\ldots, x_{r+d-1} \rrbracket$, and the
 proof follows.
\end{proof}

\begin{corr} \label{corr:metro} There is an isomorphism $R_Q/\varpi \simeq \Tw_{Q,\wm}/\varpi \simeq  \Tw_{Q,k,\wm}$.
\end{corr}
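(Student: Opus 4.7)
The plan is essentially a one-line deduction from Theorem~\ref{theorem:minimal}, supplemented by a standard flatness argument to identify the integral and mod-$\varpi$ Hecke algebras. For the first isomorphism $R_Q/\varpi \simeq \Tw_{Q,\wm}/\varpi$ I simply reduce the integral $R = \T$ isomorphism of Theorem~\ref{theorem:minimal} modulo $\varpi$, which is the unique content used here.

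For the second isomorphism $\Tw_{Q,\wm}/\varpi \simeq \Tw_{Q,k,\wm}$, the plan is to verify first that the natural map $\Tw_{Q,\OL}/\varpi \to \Tw_{Q,k}$ is an isomorphism, and then observe that localization at $\wm$ commutes with this reduction. Because $X_H(\NQ)$ is smooth proper over $\OL$ and the weight $p \ge 2$ is cohomologically well-behaved, I would argue that the finitely generated $\OL$-module $H^0(X_H(\NQ),\omega^p_{\OL})$ is torsion-free (it embeds in the generic fibre $H^0(X_H(\NQ),\omega^p_E)$), hence finite free over $\OL$, and that the base-change map
$$H^0(X_H(\NQ),\omega^p_{\OL}) \otimes_{\OL} k \xrightarrow{\;\sim\;} H^0(X_H(\NQ),\omega^p_k)$$
is an isomorphism (the classical and mod-$p$ dimensions of weight-$p$ forms agree, so by cohomology-and-base-change $H^1$ also commutes with reduction, giving the desired identification on $H^0$). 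Granting this, the natural map $\Tw_{Q,\OL}\otimes_{\OL}k \to \Tw_{Q,k}$ is surjective, since its image contains all generators $T_\ell$, $T_p$, and $\langle d \rangle$, and injective, since the faithful action of $\Tw_{Q,\OL}$ on the free $\OL$-module $H^0(X_H(\NQ),\omega^p_{\OL})$ remains faithful after tensoring with $k$. Finally, since localization is exact it commutes both with the quotient by $\varpi$ and with the surjection $\Tw_{Q,\OL} \twoheadrightarrow \Tw_{Q,\OL}/\varpi \simeq \Tw_{Q,k}$, and we may pass to the localization at $\wm$ to conclude.

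There is no real obstacle here: the corollary is a packaging of Theorem~\ref{theorem:minimal} together with the well-known flatness and base change for coherent cohomology of line bundles in weight $\ge 2$. The mild thing to be careful about is precisely this base change, which is the whole reason the paper detours through weight~$p$; in weight one the analogous statement famously fails because of torsion in $H^1$, and circumventing that failure is the point of Theorem~\ref{theorem:main} itself.
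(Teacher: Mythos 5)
Your first isomorphism is exactly the paper's: reduce Theorem~\ref{theorem:minimal} modulo $\varpi$. Your identification $H^0(X_H(\NQ),\omega^p_{\OL})\otimes_{\OL}k\simeq H^0(X_H(\NQ),\omega^p_k)$ is also fine in weight $p\ge 2$, and it does give surjectivity of $\Tw_{Q,\OL}/\varpi\to\Tw_{Q,k}$. The gap is in your injectivity step. You assert that a faithful action of $\Tw_{Q,\OL}$ on a finite free $\OL$-module ``remains faithful after tensoring with $k$.'' That is false for a general commutative subalgebra $T\subset\End_{\OL}(M)$: take $M=\OL^2$ and $T=\OL[\phi]$ with $\phi=\left(\begin{smallmatrix}0&\varpi\\0&0\end{smallmatrix}\right)$. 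Then $T$ is free of rank two over $\OL$ and acts faithfully on $M$, but $\phi M\subset\varpi M$ while $\phi\notin\varpi T$, so $T/\varpi\to\End_k(M/\varpi)$ has nontrivial kernel. In general the kernel of $T/\varpi\to\End_k(M/\varpi)$ is $\{t: tM\subset\varpi M\}/\varpi T$, and killing it requires knowing that $T$ is saturated in $\End_{\OL}(M)$ --- which is precisely the nontrivial content here, not a formal consequence of flatness.

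The paper closes this gap with Lemma~\ref{lemma:count}: multiplicity one for $q$-expansions (using \emph{all} Hecke operators, including $T_p$ and the $U_\ell$) shows that $H^0(X,\omega^{p}_{E/\OL})^{\vee}_{\wm}$ is free of rank one over $\Tw_{Q,\wm}$. Since $H^0(X,\omega^{p}_k)^{\vee}=\bigl(H^0(X,\omega^{p}_{E/\OL})[\varpi]\bigr)^{\vee}=H^0(X,\omega^{p}_{E/\OL})^{\vee}/\varpi$, freeness of rank one passes to the quotient and shows $\Tw_{Q,\wm}/\varpi$ acts faithfully on the mod-$p$ forms, which is the needed injectivity. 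Equivalently, one needs the perfectness over $\OL$ of the pairing $\langle T_n,f\rangle=a_1(T_nf)$, which reduces compatibly mod $\varpi$. So your outline is repairable, but only by invoking the $q$-expansion/multiplicity-one input that the paper routes through Lemma~\ref{lemma:count}; without it the second isomorphism does not follow.
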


\begin{proof} The first isomorphism is an immediate consequence
of the previous theorem, so it suffices to show that $\Tw_{Q,\wm}/\varpi \simeq \Tw_{Q,k,\wm}$, or equivalently,
that $\Tw_{Q,\wm}/\varpi$ acts faithfully on $H^0(X,\omega^{\expp}_k)$. For this it suffices to
note that 
$$H^0(X,\omega^{\expp}_k)^{\vee} = (H^0(X,\omega^{\expp}_{E/\OL})[\varpi])^{\vee}
= H^0(X,\omega^{\expp}_{E/\OL})^{\vee}/\varpi$$
 and that $H^0(X,\omega^{\expp}_{E/\OL})^{\vee}$ is free of rank
one over~$\Tw_{\m}$ by Lemma~\ref{lemma:count} below.
\end{proof}

\section{Katz modular forms}
We now study more closely the action of Hecke operators in characteristic~$p$, especially in weights~$p$
and~$1$. In this section, we denote $\T_{Q,k}$ and $\Tw_{Q,k}$ by $\T$ and $\Tw$.
We use freely the $q$-expansion principle, namely, that a form 
in $H^0(X,\omega^n_{k})$ is determined by its image in $k\llbracket  q\rrbracket$.
Multiplication by the Hasse invariant induces a map:
$$A: H^0(X,\omega_k) \rightarrow H^0(X,\omega^{\expp}_k)$$
which is an injection and is the identity map on $q$-expansions. It follows that this map is $\T$-equivariant,
but it is not in general $\Tw$-equivariant. There is another map between these spaces induced by the
map $q \mapsto q^{\expp}$:
$$V: H^0(X,\omega_k) \rightarrow H^0(X,\omega^{\expp}_k),$$
this map $V$ is also $\T$-equivariant. (Although the corresponding Hecke algebras $\T$ in weights one and $p$ are
not the same rings, the meaning of $\T$-equivariance should be clear.)

\begin{lemma} \label{lemma:count} Let $\wm$ be a maximal ideal in $\Tw$ in weight~$p$, and let $\m$ be the corresponding
maximal ideal in $\T$. Then
$\dim H^0(X,\omega^{\expp}_k)[\wm] = 1$ and
$$\dim H^0(X,\omega^{\expp}_k)[\m] = 1 +  \dim H^0(X,\omega_k)[\m],$$
 where
 $\dim H^0(X,\omega_k)[\m] \le 1$.
\end{lemma}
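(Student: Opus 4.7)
The first assertion, $\dim H^0(X,\omega^{\expp}_k)[\wm]=1$, is immediate from the $q$-expansion principle. An element of this space is a simultaneous eigenform for all of~$\Tw$, whose Fourier coefficient~$a_n$ equals~$a_1$ times a constant depending only on the eigenvalues of~$\Tw/\wm$ (via the standard Hecke recursions $a_{mn}=a_ma_n$ for~$\gcd(m,n)=1$ and the Euler factor at each prime, including at~$p$ where~$T_p=U_p$ is part of~$\Tw$). Injectivity of the $q$-expansion map then gives~$\dim\le 1$, and the lower bound~$\dim\ge 1$ holds because~$\wm$ is by construction a maximal ideal of a Hecke algebra acting faithfully on~$H^0(X,\omega^{\expp}_k)$, so the socle of the $\wm$-localization is nonzero.

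For the second assertion, set $d:=\dim H^0(X,\omega_k)[\m]$. Since both $A$ and $V$ are $\T$-equivariant, they carry $H^0(X,\omega_k)[\m]$ into $H^0(X,\omega^{\expp}_k)[\m]$. A direct check on $q$-expansions shows that the combined map $(g,h)\mapsto A(g)+V(h)$ from $H^0(X,\omega_k)\oplus H^0(X,\omega_k)$ to $H^0(X,\omega^{\expp}_k)$ is injective: if $A(g)+V(h)=0$, then $g(q)=-h(q^{\expp})$, so the $q$-expansion of $g$ lies in $k\llbracket q^{\expp}\rrbracket$, which by the filtration theory of mod~$\expp$ Katz modular forms forces $g=0$ (a weight-one form cannot have a $p$-th power $q$-expansion, since $\expp\nmid 1$), and hence also $h=0$. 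Applied to the $\m$-eigenspaces, this yields $\dim H^0(X,\omega^{\expp}_k)[\m]\ge 2d$.

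For the upper bound, I would appeal to multiplicity one in weight~$\expp$: since $\rhobar$ is absolutely irreducible and $\expp\ge 3$, the classical Mazur--Ribet argument (applied in weight~$\expp$) shows that $H^0(X,\omega^{\expp}_k)[\m]$ is a cyclic module over $\Tw/\m$, so its $k$-dimension equals the degree of the minimal polynomial of~$U_p$ acting on it. A direct $q$-expansion computation yields the identities $U_pA(g)=a_pA(g)-\chi(p)V(g)$ and $U_pV(g)=A(g)$, where $a_p$ is the $T_p$-eigenvalue attached to $g\in H^0(X,\omega_k)[\m]$ and $\chi=\det\rhobar$. So when $d=1$, the span of $A(g)$ and $V(g)$ is $U_p$-stable and $U_p$ has there characteristic polynomial $x^2-a_px+\chi(p)$, of degree~$2$; when $d=0$, Serre's conjecture supplies a single weight-$\expp$ eigenform accounting for a one-dimensional $\m$-kernel. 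In either case, the minimal polynomial of~$U_p$ on $H^0(X,\omega^{\expp}_k)[\m]$ has degree $1+d$.

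Combining the two bounds gives $2d\le\dim H^0(X,\omega^{\expp}_k)[\m]\le 1+d$, which forces $d\le 1$ and then equality throughout. The main obstacle is the upper bound: one needs both cyclicity of $H^0(X,\omega^{\expp}_k)[\m]$ over $\Tw/\m$ (a mod-$\expp$ multiplicity-one statement in weight~$\expp$ for absolutely irreducible~$\rhobar$) and the verification that no $U_p$-eigenvalue outside the roots of $x^2-a_px+\chi(p)$ can occur on the $\m$-kernel. Granted these, the lemma follows from the injectivity of $A\oplus V$ on $q$-expansions together with the explicit~$A$/$V$-matrix calculation.
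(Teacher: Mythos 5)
Your first assertion and your lower bound $\dim H^0(X,\omega^{\expp}_k)[\m]\ge 2d$ are sound and use the same inputs as the paper (the $q$-expansion principle, and Katz's theorem that a weight-one form with $q$-expansion in $k\llbracket q^{\expp}\rrbracket$ vanishes). The genuine gap is the upper bound $\dim H^0(X,\omega^{\expp}_k)[\m]\le 1+d$, which you rightly flag as the crux but do not prove, and whose missing ingredients are not citable facts of the strength you need. First, cyclicity of $H^0(X,\omega^{\expp}_k)[\m]$ over $\Tw/\m\Tw$ is not ``classical Mazur--Ribet'': multiplicity one for the anaemic algebra $\T$ \emph{fails} here (that failure is exactly the doubling phenomenon the paper exploits); what is true is that cyclicity over $\Tw$ follows from your own first assertion by duality, since $H^0(X,\omega^{\expp}_k)^{\vee}_{\wm}$ is then free of rank one over $\Tw_{\wm}$ and $\Tw_{\wm}/\m\Tw_{\wm}=k[U_p]/(f)$ is Gorenstein. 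Second, and fatally, your computation of $U_p$ on the span of $A(g)$ and $V(g)$ only bounds the degree of the minimal polynomial of $U_p$ on $H^0(X,\omega^{\expp}_k)[\m]$ from \emph{below}; for the upper bound you would need $U_p$ to satisfy a polynomial of degree $1+d$ on the \emph{whole} space, and ``no eigenvalue outside the roots of $x^2-a_px+\chi(p)$'' would not suffice even if proved, since it does not exclude larger nilpotent blocks or extra generalized eigenvectors. In the case $d=0$ the same problem recurs: Serre's conjecture gives existence of an eigenform, i.e.\ $\dim\ge 1$, not the needed bound $\dim\le 1$. Since the upper bound is essentially equivalent to the lemma, the sandwich $2d\le\dim\le 1+d$ is circular at its key step.

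The paper closes exactly this gap with one further use of $q$-expansions. Letting $f\in H^0(X,\omega^{\expp}_k)[\wm]$ be the normalized eigenform, the map $g\mapsto g-a_1(g)f$ on $H^0(X,\omega^{\expp}_k)[\m]$ has one-dimensional kernel $k\cdot f$ and image exactly $k\llbracket q^{\expp}\rrbracket\cap H^0(X,\omega^{\expp}_k)[\m]$: since every $T_n$ with $(n,p)=1$ lies in $\T$, any element of $H^0(X,\omega^{\expp}_k)[\m]$ with $a_1=0$ has $a_n=0$ for all $n$ prime to $p$. Katz's theorem identifies that intersection with $V\bigl(H^0(X,\omega_k)[\m]\bigr)$, which has dimension $d$, giving $\dim=1+d$ with no separate upper bound required; running the identical argument in weight one, where $k\llbracket q^{\expp}\rrbracket\cap H^0(X,\omega_k)=0$, gives $d\le 1$. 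To repair your argument you should replace the minimal-polynomial step by this (or an equivalent) exact determination of the subspace $\{a_1=0\}$.
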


\begin{proof} The Hecke operators at all primes determine the $q$-expansion, which proves the first equality.
For the second, let $f \in H^0(X,\omega^p_k)[\wm]$ denote the unique eigenform with leading coefficient~$1$.
There is a homomorphism
$$H^0(X,\omega^p_k)[\m] \rightarrow k\llbracket  q^{\expp}\rrbracket \cap H^0(X,\omega^{\expp}_k)[\m]$$
given by  $g \mapsto g - a_1(g) f$. By the $q$-expansion principle, the kernel of
this map is one-dimensional. 
By the main  theorem of Katz~\cite{Katz}, the
space $k\llbracket  q^{\expp}\rrbracket \cap H^0(X,\omega^{\expp}_k)[\m]$, which lies in the kernel $\ker(\theta)$ of the theta operator,
may be identified with the image of $H^0(X,\omega_k)[\m]$ under $V$.
 This gives the first
equality. To prove the  inequality, we repeat
the same argument in weight one, except now~\cite{Katz} implies that $ k\llbracket  q^{\expp}\rrbracket \cap H^0(X,\omega_k) = 0$.
\end{proof}

It follows that $\T^1 = \Tw^1$ in weight~one because $T_p \in \T^1$ and $\m \T^1 = {\wm} \T^1$.
\subsection{Doubled modules}
We define the notion of a doubled module with respect to $\T$ and~$\Tw$.

\begin{df} Let $N \subset H^0(X,\omega^{\expp}_k)$ be invariant under the action of $\Tw$,
let~$\TI = \Ann_{\Tw}(N)$, 
and let $I = \Ann_{\T}(N) = \TI \cap \T$. We say that $N$ is doubled if the action of $\Tw$ on $N$ acts
faithfully through a quotient $\Tw/{\TI}$ such that
$$\length(\Tw/{\TI}) = 2  \cdot \length(\T/I).$$
\end{df}

\begin{lemma} There exists a maximal doubled sub-module of $H^0(X,\omega^{\expp}_k)_{\m}$.
\end{lemma}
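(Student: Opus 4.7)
The plan is to prove the stronger statement that doubled submodules are closed under sums; combined with the finite-dimensionality of $H^0(X,\omega^{\expp}_k)_{\m}$ as a $k$-vector space, this produces a unique (hence maximal) doubled submodule, namely the sum of all doubled submodules. The zero submodule is vacuously doubled (both relevant lengths vanish, since $\TI = \Tw$ and $I = \T$), so the collection $\mathcal{D}$ of doubled submodules is nonempty to begin with.

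The key step is closure: given $N_1, N_2 \in \mathcal{D}$, I want to show that $N_1 + N_2 \in \mathcal{D}$. Writing $\TI_i = \Ann_{\Tw}(N_i)$ and $I_i = \Ann_{\T}(N_i)$, we have $\Ann_{\Tw}(N_1 + N_2) = \TI_1 \cap \TI_2$ and $\Ann_{\T}(N_1 + N_2) = I_1 \cap I_2$. The standard exact sequence
\begin{equation*}
0 \to \Tw/(\TI_1 \cap \TI_2) \to \Tw/\TI_1 \oplus \Tw/\TI_2 \to \Tw/(\TI_1 + \TI_2) \to 0
\end{equation*}
and its analogue over $\T$, together with the doubled hypothesis on each $N_i$, reduce the required identity $\length(\Tw/(\TI_1 \cap \TI_2)) = 2\length(\T/(I_1 \cap I_2))$ to the single remaining equality
\begin{equation*}
\length(\Tw/(\TI_1 + \TI_2)) \;=\; 2\,\length(\T/(I_1 + I_2)).
\end{equation*}

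To verify this last identity, I would exploit that $\Tw$ is generated over $\T$ by the single element $T_p$, which by Lemma~\ref{lemma:count} satisfies a quadratic relation over $\T_{\m}$ whose residual factorization controls the (at most two) eigenvalues of Frobenius at $p$. Consequently, for any ideal $J \subseteq \T$, the length of $\Tw/J\Tw$ over $\T/J$ is at most $2$, and doubledness for a module $N$ amounts to the assertion that, on the relevant quotient, $T_p$ does not satisfy any linear polynomial over the image of $\T$. The main obstacle, and the reason this lemma merits proof rather than being a one-line consequence of Zorn's lemma, is precisely showing that this ``no linear relation'' condition descends from $N_1$ and $N_2$ to the common quotient of $\Tw$ attached to $\TI_1 + \TI_2$; once this is in hand, induction on the number of summands applied to the finitely many doubled submodules of the finite-dimensional space $H^0(X,\omega^{\expp}_k)_{\m}$ produces the maximal element. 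Should the closure step prove recalcitrant, a fallback is to invoke Noetherianity of the ambient finite-dimensional space directly: any chain in $\mathcal{D}$ stabilizes, so a maximal (though possibly non-unique) element of $\mathcal{D}$ exists by Zorn.
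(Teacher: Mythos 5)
Your main route stalls at exactly the point you flag: the identity $\length(\Tw/(\TI_1+\TI_2)) = 2\length(\T/(I_1+I_2))$ is the whole content of the closure step, and you do not prove it. A priori one only knows $(\TI_1+\TI_2)\cap\T \supseteq I_1+I_2$, and neither equality here nor freeness of $\Tw/(\TI_1+\TI_2)$ over $\T/(I_1+I_2)$ follows from the doubledness of $N_1$ and $N_2$ taken separately, so your reduction leaves the hard part untouched. The fallback does not repair this: in a finite-dimensional space a maximal element of the (nonempty) poset of doubled submodules exists trivially, but that is not what the lemma is for. The sequel (Proposition~\ref{prop:porpoise} and the surjection $\Tone \rightarrow \T/J$) needs a \emph{greatest} doubled submodule, one containing every doubled submodule --- equivalently, the statement that a single ideal $J$ is contained in the $\T$-annihilator of every doubled $N$. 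Zorn gives you maximality in the partial-order sense, not this containment.

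The paper's argument produces the containment directly by working with annihilators rather than with the submodules. After reducing to the case $\dim H^0(X,\omega^{\expp}_k)[\m]=2$ (so that $\Tw_{\m}$ is generated over $\T_{\m}$ by $T_p$, which satisfies a quadratic relation), one sets $J = \Ann_{\T_{\m}}(\Tw_{\m}/\T_{\m})$, the conductor-type ideal measuring the failure of $1,T_p$ to be a free basis of $\Tw_{\m}$ over $\T_{\m}$. If $N$ is doubled then $\Tw_{\m}/\TI$ is free of rank two over $\T_{\m}/I$ on $1,T_p$, so every $\T_{\m}$-linear relation between $1$ and $T_p$ must become trivial modulo $I$; this forces $J \subseteq I$ for \emph{every} doubled $N$. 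One then checks that $J$ is in fact an ideal of $\Tw_{\m}$ and that $H^0(X,\omega^{\expp}_k)[J]$ is doubled; since any doubled $N$ satisfies $N \subseteq H^0(X,\omega^{\expp}_k)[I] \subseteq H^0(X,\omega^{\expp}_k)[J]$, this $J$-torsion submodule is the required maximum. Your closure-under-sums statement then falls out as a corollary, but the logical order is the reverse of what you propose: the universal bound $J \subseteq I$ has to come first.
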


\begin{proof} 
If $\dim H^0(X,\omega^{\expp}_k)[\m] = 1$, then $H^0(X,\omega^{\expp}_k)^{\vee}_{\m}$
is free of rank one over $\T_{\m}$ and $\Tw_{\m}$, so $\T_{\m} \simeq \Tw_{\m}$ in that case,
and the maximal doubled quotient is trivial. Hence we assume that
$\dim H^0(X,\omega^{\expp}_k)[\m] = 2$. By Nakayama's Lemma applied to $\T_{\m}$, it follows that 
$\Tw_{\m}$ has rank at most $2$ over $\T_{\m}$, or equivalently that $T_p$ satisfies a quadratic
relation. If $N$ is doubled, however,
then~$\Tw_{\m}/\TI$ must  be free of rank two as a~$\T_{\m}/I$-module.
In particular,
 $I$ must act trivially on  $\Tw_{\m}/\T_{\m}$,
so it must contain the annihilator of this module. Let $J$ be the
annihilator of $\Tw_{\m}/\T_{\m}$ as a~$\T_{\m}$-module. This is an ideal of~$\T_{\m}$;
we claim that it is actually an ideal of~$\Tw_{\m}$. 
By definition, if~$a \in \T_{\m}$ is any element, then~$a$ annihilates~$\Tw_{\m}/\T_{\m}$
if and only if it lies in~$J$.
Equivalently,  we have $a x \in \T_{\m}$ for all~$x \in \Tw_{\m}$ if and only if~$a \in J$.
To show that~$J$ is an ideal of~$\Tw_{\m}$, it suffices to show
that $a T_p \in J$.  By the previous equivalence, we have~$a T_p \in \T_{\m}$.
Moreover, since~$a x \in \T_{\m}$ for every~$x \in \Tw_{\m}$, we also have~$a T_p x \in \T_{\m}$
for every~$x \in \Tw_{\m}$. Thus~$a T_p \in J$, and~$J$ is an ideal of~$\Tw_{\m}$.
We then observe that
  $H^0(X,\omega^{\expp}_k)[J]$ is doubled, and is thus the maximal doubled
  sub-module.
\end{proof}

The ideal~$J$ is the analogue in this context of the (global) doubling ideal
denoted$\JJg$ in~\cite{CG}.

Let $M \subset H^0(X_{k},\omega^{\expp})_{\m}$ be a maximally doubled module. 
Hence $M^{\vee}$ is free of rank $2$ over ${\Tp}/J$ and free of rank~$1$ over $\Tw/J$,
where $J = \Ann_{{\Tp}}(M)$.
The only maximal ideal of ${\Tp}/J$ is $\m$, so ${\Tp}/J$ is a finite local ring.
Let $\ker(\theta)$  denote the subset of elements annihilated by the $\theta$ operator,
and let $\ker_M(\theta) = \ker(\theta) \cap M$

\begin{lemma} \label{lemma:faithful} The module $M/\ker_M(\theta)$ is a faithful ${\Tp}/J$-module. 
\end{lemma}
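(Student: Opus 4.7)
The plan is to verify that $\Ann_{\T/J}(M/\ker_M(\theta)) = 0$, i.e., that any $t \in \T$ with $tM \subset \ker_M(\theta)$ must lie in $J$. First, I would restate the hypothesis using Katz's theorem: since $\theta$ vanishes on precisely those forms whose $q$-expansion lies in $k\llbracket q^{\expp} \rrbracket$, and (as already invoked in Lemma~\ref{lemma:count}) this intersects $H^0(X, \omega^{\expp}_k)$ in the image of $V$, the assumption becomes $tM \subset V(H^0(X,\omega_k)_{\m})$.

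The core input would be the identity $T_p \circ V = A$, verified on $q$-expansions and then promoted by the $q$-expansion principle: in weight $p$ and characteristic $p$, the weight-$p$ Hecke operator $T_p$ extracts the subseries in powers of $q^{\expp}$, and applied to $V(g) = g(q^{\expp})$ this yields $g$, which coincides with the $q$-expansion of $A(g)$. Using that $M$ is $\Tw$-stable and that $t$ commutes with $T_p$, the subspace $t T_p M$ coincides with both $t(T_p M)$ and $T_p(tM)$, and so sits simultaneously inside $tM \subset V(H^0(X,\omega_k)_{\m})$ and inside $T_p V(H^0(X,\omega_k)_{\m}) = A(H^0(X,\omega_k)_{\m})$.

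The key observation is that $V(H^0(X,\omega_k)_{\m}) \cap A(H^0(X,\omega_k)_{\m}) = 0$: a common element would produce a weight-one form whose $q$-expansion lies in $k\llbracket q^{\expp} \rrbracket$, contradicting the weight-one case of Katz's theorem recalled in the proof of Lemma~\ref{lemma:count}. Hence $t T_p M = 0$, so $t T_p \in \Ann_{\Tw}(M) = J$, the last equality holding because $M^{\vee}$ is $\Tw/J$-free of rank one.

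To conclude, I would exploit that the doubling condition makes $\{1, T_p\}$ a free $\T/J$-basis of $\Tw/J$: by the Nakayama argument already used in the preceding lemma, $\Tw_{\m}$ is generated over $\T_{\m}$ by $1$ and $T_p$, so there is a surjection $(\T/J)^{\oplus 2} \twoheadrightarrow \Tw/J$ sending $(a,b) \mapsto a + b T_p$, and source and target have equal length by the doubling hypothesis, forcing this map to be an isomorphism. The relation $t T_p \in J$ therefore forces $t \in J$, as desired. The main conceptual step — and the only nontrivial one — is the swap of $\ker(\theta)$ for the image of $A$ under $T_p$, combined with the triviality of $V(H^0) \cap A(H^0)$ in characteristic $p$; the rest is bookkeeping with the doubled structure.
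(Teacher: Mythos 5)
Your argument is correct, but it takes a genuinely different route from the paper's. The paper proves the lemma by pure module theory over the Artinian local ring $\T/J$: dualizing the inclusion $\ker_M(\theta)\subset M$ gives a surjection $(\T/J)^2\simeq M^{\vee}\to\ker_M(\theta)^{\vee}$ whose target is cyclic by Lemma~\ref{lemma:count}, so the kernel $K=(M/\ker_M(\theta))^{\vee}$ contains an element with a unit coordinate, hence a free rank-one $\T/J$-submodule, hence is faithful, and faithfulness passes to the dual. You instead make the Hecke operator at $p$ do the work: Katz identifies $\ker(\theta)$ in weight $p$ with $V(H^0(X,\omega_k))$; the identity $T_p\circ V=A$ (valid because $T_p$ acts as $U_p$ on $q$-expansions in weight $p>1$ in characteristic $p$, promoted to sections by the $q$-expansion principle) together with $V(H^0(X,\omega_k))\cap A(H^0(X,\omega_k))=0$ forces $tT_pM=0$; and the doubling hypothesis, packaged as the statement that $\{1,T_p\}$ is a free $\T/J$-basis of $\Tw/J$, converts $tT_p\in J$ into $t\in J$. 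Both proofs consume exactly the two inputs of Lemma~\ref{lemma:count} (multiplicity one in weight $p$ and $k\llbracket q^{\expp}\rrbracket\cap H^0(X,\omega_k)=0$), but yours makes explicit the mechanism the paper leaves implicit --- that $T_p$ interchanges the $V$-part and the $A$-part of a doubled module --- at the cost of needing the identity $T_pV=A$, which the paper's shorter argument avoids. The one step worth spelling out is $\Ann_{\Tw}(M)=J$: a priori $J=\Ann_{\T}(M)$ is only an ideal of $\T$, but the construction of the maximal doubled submodule shows $J$ is an ideal of $\Tw_{\m}$ with $M^{\vee}\simeq\Tw_{\m}/J$ free of rank one, which is exactly what your deduction $tT_p\in J$ requires; with that reference in place there is no gap.
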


\begin{proof} We have a surjection:
$$({\Tp}/J)^2 \simeq M^{\vee} \rightarrow \ker_M(\theta)^{\vee}$$
The module $\ker_M(\theta)$ is isomorphic as a $\Tp$-module to $H^0(X,\omega_k)_{\m}$. Hence,
by Lemma~\ref{lemma:count}, $\ker_M(\theta)^{\vee}$ is cyclic as a $\Tp/J$-module. 
 If $K$ denotes the kernel, it follows that $K/\m \rightarrow ({\Tp}/\m)^2$ has non-trivial image.
Let $x \in K$ denote an element which maps to a non-zero element in $({\Tp}/\m)$. Then the cyclic
module in $({\Tp}/J)^2$ generated by $x$ is a faithful ${\Tp}/J$-module, and hence $K$ is also a faithful
${\Tp}/J$-module. We then have $M/\ker_M(\theta) = K^{\vee}$.
\end{proof}

\begin{df} There is a $\Tp$-equivariant  pairing 
$\Tp/J \times M \rightarrow k$ defined  as follows:
$$\langle T_{n}, f \rangle = a_{1}(T_n f).$$
\end{df}

\begin{lemma} \label{lemma:perfect} The map $\langle *,* \rangle$  
 is a perfect pairing between $\Tp/J$ and $M/\ker_M(\theta)$.
\end{lemma}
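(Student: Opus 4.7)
The approach is to check first that the pairing descends to a pairing on $\Tp/J\times M/\ker_M(\theta)$, and then to verify non-degeneracy on both sides separately. Since everything in sight is a finite-dimensional $k$-vector space, non-degeneracy on both sides is equivalent to the pairing being perfect. The key technical input will be the faithfulness statement of Lemma~\ref{lemma:faithful}; the rest amounts to combining the standard identity $a_n(f)=a_1(T_n f)$, the $q$-expansion principle, Katz's characterization of $\ker(\theta)$ (that $f\in\ker(\theta)$ iff $a_n(f)=0$ for every $n$ coprime to $p$), and the commutativity of $\Tp$.

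First I would check the pairing kills $\ker_M(\theta)$ on the right. If $f\in\ker_M(\theta)$ and $T\in\Tp/J$, then $Tf\in\ker_M(\theta)$ because the Hecke operators away from $p$ preserve $\ker(\theta)$ (they commute with $\theta$ on $q$-expansions). By Katz, this forces $a_n(Tf)=0$ for every $(n,p)=1$, in particular $a_1(Tf)=0$. Hence the pairing factors through $\Tp/J\times M/\ker_M(\theta)\to k$.

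Next I would establish right non-degeneracy: suppose $f\in M$ satisfies $a_1(Tf)=0$ for every $T\in\Tp/J$. Specializing to $T=T_n$ with $(n,p)=1$ gives $a_n(f)=a_1(T_n f)=0$ for every such $n$, so $f\in\ker(\theta)\cap M=\ker_M(\theta)$. For left non-degeneracy, suppose $T\in\Tp/J$ pairs to zero against all of $M$. Using commutativity in $\Tp$, for every $f\in M$ and every $n$ coprime to $p$ I compute
\[
a_n(Tf)\;=\;a_1(T_n T f)\;=\;a_1(T\cdot T_n f)\;=\;0,
\]
so $Tf\in\ker(\theta)\cap M=\ker_M(\theta)$. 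This says $T$ acts as zero on $M/\ker_M(\theta)$, and Lemma~\ref{lemma:faithful} then forces $T=0$ in $\Tp/J$.

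Combining both non-degeneracies in the finite-dimensional setting yields the perfectness of the induced pairing on $\Tp/J\times M/\ker_M(\theta)$. The only step that is not essentially formal is the last, where passing from ``$T$ annihilates the quotient'' to ``$T=0$'' requires the faithfulness result of Lemma~\ref{lemma:faithful}; this is the heart of the argument, and everything else is bookkeeping with $q$-expansions and $\theta$.
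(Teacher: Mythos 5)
Your proof is correct and follows essentially the same route as the paper: descend the pairing modulo $\ker_M(\theta)$, get right non-degeneracy from the identity $a_n(f)=a_1(T_nf)$ together with the $q$-expansion characterization of $\ker(\theta)$, and get left non-degeneracy from Hecke-equivariance plus the faithfulness of $\Tp/J$ on $M/\ker_M(\theta)$ (Lemma~\ref{lemma:faithful}). The only cosmetic difference is that you make explicit the (standard) final step that two-sided non-degeneracy of a pairing of finite-dimensional $k$-vector spaces implies perfectness, which the paper leaves implicit.
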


\begin{proof} If $f = \sum a_n q^n$ and $\theta(f) = 0$, then $a_n = 0$ for all $(n,p) = 1$, so 
$\langle T_n,f \rangle = 0$ for all $T_n \in \Tp$, and hence for all $T \in \Tp$. Conversely, if $\langle T_n ,f \rangle = 0$, then $a_n = 0$
for all $(n,p) = 1$ and $f$ lies in the kernel of $\theta$. Now suppose that $\langle T,f \rangle = 0$ for all $f \in M$.
Since the map is Hecke equivariant, it follows that
$$\langle T_n, T \kern-.02em{f} \rangle = \langle T \kern+0.1em{T_n}, f \rangle =  \langle T_n T, f \rangle  =  \langle T, T_n f\rangle = 0$$
for all $n$, and hence $T \kern-.02em{f}$ is trivial in $M/\ker_M(\theta)$. But $\Tp/J$ acts faithfully on $M/\ker_M(\theta)$, so $T = 0$.
\end{proof}

 \begin{lemma} $\ker_M(\theta)$ is a faithful $\Tp/J$-module, and $\ker_M(\theta)^{\vee}$ is free over $\Tp/J$ of rank one.
 \end{lemma}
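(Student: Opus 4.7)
The plan is to dualize the short exact sequence
\[
0 \to \ker_M(\theta) \to M \to M/\ker_M(\theta) \to 0
\]
of finite-dimensional $k$-vector spaces (which are compatibly $\Tp/J$-modules), yielding an exact sequence
\[
0 \to (M/\ker_M(\theta))^{\vee} \to M^{\vee} \to \ker_M(\theta)^{\vee} \to 0
\]
of $\Tp/J$-modules.  By the definition of a maximal doubled submodule, $M^{\vee}$ is free of rank two over $\Tp/J$, while Lemma~\ref{lemma:perfect}, together with the manifest $\Tp$-equivariance of the pairing, identifies $(M/\ker_M(\theta))^{\vee}$ with $\Tp/J$ itself, free of rank one.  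So the task reduces to identifying the quotient $\ker_M(\theta)^{\vee}$ in this sequence.

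I would next invoke Lemma~\ref{lemma:count}, which gives $\dim_k H^0(X,\omega_k)[\m] \le 1$.  Since $\ker_M(\theta)$ is $\Tp$-equivariantly isomorphic to $H^0(X,\omega_k)_{\m}$, we have $\dim_k \ker_M(\theta)[\m] \le 1$; dualizing and applying Nakayama to the local ring $\Tp/J$ shows that $\ker_M(\theta)^{\vee}$ is cyclic, hence a quotient $(\Tp/J)/I$ for some ideal $I$.  A length comparison then finishes the proof: if $\ell = \length(\Tp/J)$, then $M^{\vee}$ has length $2\ell$ and $(M/\ker_M(\theta))^{\vee}$ has length $\ell$, so additivity forces $\length((\Tp/J)/I) = \ell$, whence $I = 0$ and $\ker_M(\theta)^{\vee} \simeq \Tp/J$ is free of rank one.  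Faithfulness of $\ker_M(\theta)$ as a $\Tp/J$-module then follows, since any annihilator would also annihilate the free rank-one dual.

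The only point requiring a sanity check is the degenerate case where $M = 0$: here $J = \Tp$ and $\Tp/J = 0$, so the claim is vacuous.  Otherwise, the doubling hypothesis together with Lemma~\ref{lemma:count} forces $\dim_k H^0(X,\omega^p_k)[\m] = 2$ and $\dim_k H^0(X,\omega_k)[\m] = 1$, so $\ker_M(\theta) \ne 0$ and the rank-one assertion is the correct one.  The only step with any content is thus the length-additivity identification of the cyclic quotient, which is routine once the dualized exact sequence is in place.
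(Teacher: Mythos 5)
Your argument is correct and is essentially the paper's own proof: both rest on the length count $\length(M) = 2\length(\Tp/J)$ from the doubling, the identification $\length(M/\ker_M(\theta)) = \length(\Tp/J)$ via Lemma~\ref{lemma:perfect}, cyclicity of $\ker_M(\theta)^{\vee}$ from Lemma~\ref{lemma:count} and Nakayama, and the resulting forced isomorphism with $\Tp/J$. The explicit dualized exact sequence is a cosmetic repackaging of the same length additivity.
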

 
 \begin{proof} By definition, $M$ is free of rank two over $\Tp/J$, and so it has the same
 length as $(\Tp/J)^2$. However, by Lemma~\ref{lemma:perfect},
 $M/\ker_M(\theta)$ and $\Tp/J$ have the same dimension over $k$, and hence the same length.
 It follows that $\ker_M(\theta)$ has the same length as $\Tp/J$. Since $\dim \ker_M(\theta)[\m] = 1$,
 the module $\ker_M(\theta)^{\vee}$ is cyclic of the same length as $\Tp/J$, and thus free of rank one
 over $\Tp/J$. Hence $\ker_M(\theta)$ is also faithful as a $\Tp/J$-module.
\end{proof} 
 
Since, by Lemma~\ref{lemma:count}, $\ker(\theta)^{\vee}_{\m}$ is also free of rank one over $\Tone = \Tone_{Q,k}$,
the Hecke algebra in weight one, and 
$\ker_M(\theta)^{\vee}$ is a quotient of this module,  we deduce the
immediate corollary:

\begin{corr} \label{cor:pickle}There is a surjection $\Tone \rightarrow \T/J$.
\end{corr}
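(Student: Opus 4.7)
The plan is to exploit the two module structures carried by $\ker_M(\theta)^{\vee}$. By the previous lemma it is free of rank one over $\T/J$. On the other hand, by Katz's theorem (as invoked in Lemma~\ref{lemma:count}) the operator $V$ identifies $\ker(\theta) \cap H^0(X,\omega^{\expp}_k)_{\m}$ with $V(H^0(X,\omega_k)_{\m})$ as $\T$-modules; combining this with Lemma~\ref{lemma:count} and Nakayama, $\ker(\theta)^{\vee}_{\m}$ is free of rank one over $\Tone$, and hence its quotient $\ker_M(\theta)^{\vee}$ is cyclic over $\Tone$.

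Next I would check that these $\Tone$- and $\T/J$-actions on $\ker_M(\theta)^{\vee}$ commute. Both arise by restriction from actions on ambient spaces, and the $\T$-equivariance of $V$ ensures that the operators $T_n$ (for $n$ prime to $p$) and the diamond operators act identically under either structure. The only additional operator $\Tone$ carries is the weight-one $T_p$; it commutes with all other Hecke operators already in $\Tone$ by commutativity, and hence its induced action on $\ker_M(\theta)^{\vee}$ commutes with the entire $\T/J$-action. Thus the image of $\Tone$ in $\End_k(\ker_M(\theta)^{\vee})$ centralizes $\T/J$, and since $\ker_M(\theta)^{\vee}$ is free of rank one over $\T/J$ this centralizer equals $\T/J$.

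This produces the desired ring homomorphism $\Tone \to \T/J$. Surjectivity is immediate: the images of $T_n$ (for $n$ prime to $p$) and the diamond operators agree under the two maps from $\Tone$ and from $\T$, and these already generate $\T$ and hence $\T/J$. The most delicate step is arguably the commutation of the two actions, but it is forced by the $q$-expansion principle together with the $\T$-equivariance of $V$, after which the rest of the argument is essentially formal.
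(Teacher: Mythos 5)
Your argument is correct and follows the paper's route: both proofs rest on $\ker(\theta)^{\vee}_{\m}$ being free of rank one over $\Tone$ (Lemma~\ref{lemma:count} plus Katz), $\ker_M(\theta)^{\vee}$ being free of rank one over $\T/J$, and the dual of the inclusion $\ker_M(\theta)\subset\ker(\theta)_{\m}$. The paper treats the conclusion as immediate, whereas you spell out the commutation/centralizer step and the generation of $\T/J$ by the prime-to-$p$ operators; this is a harmless elaboration (and the worry about the weight-one $T_p$ is moot, since by definition $\Tone$ is generated by the $T_n$ with $n$ prime to $p$ and the diamond operators).
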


Now let us fix~$X = X_H(\NQ)$, and suppose that~$\m$  and~$\wm$
correspond to our residual Galois representation~$\rhobar$ together with a suitable choice of~$a_{\ell}$.

\begin{prop} \label{prop:porpoise} There exists a doubled submodule $M \subset  H^0(X_H(\NQ),\omega^{\expp}_k)_{\m}$ such that the action of
$\T$ on $M$ acts faithfully through $R^1_Q/\varpi$. In particular, there is a surjection~$\T/J \rightarrow R^1_Q/\varpi$.
\end{prop}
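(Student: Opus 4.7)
My plan is to take $M$ to be a maximal doubled submodule of $H^0(X_H(\NQ),\omega^{\expp}_k)_{\m}$ (guaranteed by the lemma immediately preceding Lemma~\ref{lemma:faithful}) and set $J=\Ann_{\T}(M)$. I will exhibit a surjection $\T\twoheadrightarrow R^1_Q/\varpi$ with kernel exactly $J$, by transferring, via the weight-$p$ modularity isomorphism $R_Q/\varpi\cong\Tw_{Q,k,\wm}$ of Corollary~\ref{corr:metro}, a natural quotient map $R_Q/\varpi\twoheadrightarrow R^1_Q/\varpi$.

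To produce this quotient, observe that modulo $\varpi$ we have $\eps^{p-1}\equiv 1$ (as $\ell^{p-1}\equiv 1\pmod p$ for $\ell\neq p$ and $\eps^{p-1}|_{I_p}$ takes values in $1+p\Z_p$), so the determinant conditions of $D_Q$ and $D^1_Q$ agree modulo $\varpi$; moreover, an unramified-at-$p$ deformation is automatically ordinary, with $\alpha_p$ determined as the unique Frobenius eigenvalue on the unramified line. Thus $D^1_Q\otimes\F_p$ sits inside $D_Q\otimes\F_p$ as a closed sub-functor, giving the desired surjection $R_Q/\varpi\twoheadrightarrow R^1_Q/\varpi$. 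Composing with Corollary~\ref{corr:metro} and the inclusion $\T\hookrightarrow\Tw$, I obtain a ring map $\T\to R^1_Q/\varpi$; it is surjective because its image contains $\Tr(\rho^{\univ}(\Frob_\ell))$ for $\ell\nmid p\NQ$ and, via Proposition~\ref{prop:action}, the $\alpha_\ell$ for $\ell\in N\cup Q$, which together generate $R^1_Q/\varpi$.

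The key claim is that the induced $R^1_Q/\varpi$-action on $M$ is faithful, equivalently that $\ker(\T\to R^1_Q/\varpi)=J$. One inclusion is automatic: the full $R_Q/\varpi$-action on $M$ factors through $R^1_Q/\varpi$, because $\ker_M(\theta)\cong V(H^0(X,\omega_k))_{\m}$ supports a Galois representation unramified at $p$ (being the weight-one part), and the doubling structure reduces the statement on all of $M$ to the statement on $\ker_M(\theta)$; hence $J$ lies in the kernel. The reverse inclusion is obtained by combining our map with the standard universality surjection $R^1_Q/\varpi\twoheadrightarrow\T/J$, built from Corollary~\ref{cor:pickle} together with the universal property of $R^1_Q$ applied to the Galois representation over $\T^1$ inherited by restriction from the weight-$p$ representation $G_{\Q}\to\GL_2(R_Q/\varpi)\cong\GL_2(\Tw_{Q,k,\wm})$ via the $V$-embedding. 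The two compositions are self-surjections of Artinian local rings, hence isomorphisms, so both directions are isomorphisms and $R^1_Q/\varpi\cong\T/J$ acts faithfully on $M$.

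The main obstacle is the rigorous construction of the quotient $R_Q/\varpi\twoheadrightarrow R^1_Q/\varpi$ and the verification that the $R_Q/\varpi$-action on $M$ factors through it. Concretely, this needs an analysis of Snowden's modified local deformation ring $\Rmod_p$ modulo $\varpi$, confirming that imposing unramifiedness at $p$ cuts out precisely the locus representing $D^1_Q\otimes\F_p$ (including the matching of the $\alpha_p$ datum), together with a global check that the weight-$p$ Galois representation restricted to the $V$-image really is unramified at $p$ --- essentially the statement that the weight-one eigensystems appearing inside $M$ exhibit genuinely weight-one Galois-theoretic behavior.
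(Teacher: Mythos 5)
Your plan correctly identifies the key mechanism (transfer the weight-$p$ isomorphism $R_Q/\varpi\cong\Tw_{Q,k,\wm}$ down to weight one), but the step you flag as the ``main obstacle'' is where the argument actually breaks, and it is not merely a verification to be supplied. The claimed closed immersion of functors $D^1_Q\otimes\F_p\hookrightarrow D_Q\otimes\F_p$, equivalently a surjection $R_Q/\varpi\twoheadrightarrow R^1_Q/\varpi$, does not exist in general. To turn a $D^1_Q$-deformation over a $k$-algebra $A$ into a $D_Q$-deformation one must canonically produce the scalar $\alpha_p\in A$, but if $\rhobar(\Frob_p)$ has a repeated eigenvalue there is no such datum in $R^1_Q$: an unramified $\rho$ over $A=k[\eps]/\eps^2$ can have $\rho(\Frob_p)$ with two distinct eigenvalues both reducing to $a_p$, and the characteristic polynomial of Frobenius need not even split over $R^1_Q/\varpi$, let alone have a preferred root. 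The phrase ``unique Frobenius eigenvalue on the unramified line'' is a sign of the problem --- a fully unramified $\rho|D_p$ has no distinguished line. This is exactly why the paper does \emph{not} work with $R^1_Q$ directly: it introduces the finite flat degree-two cover $\RR^1_Q=R^1_Q[\alpha_p]/\bigl(\alpha_p^2-\Tr(\rho^{\univ}(\Frob_p))\alpha_p+\psi(p)\bigr)$, and the map that genuinely exists is $\bigoplus_{a_p\in\Sigma}R_Q/\varpi\twoheadrightarrow\RR^1_Q/\varpi$, where $\Sigma$ is the set of eigenvalues of $\rhobar(\Frob_p)$ (a direct sum because when $|\Sigma|=2$ two different $\wm$'s lie over $\m$, forcing one to work with the semi-local algebra $\bigoplus_\Sigma\Tw_{Q,k,\wm}$ rather than one $\Tw_{\wm}$). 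The diagonal image of $\T$ inside this sum is exactly $R^1_Q/\varpi\subset\RR^1_Q/\varpi$, and co-freeness of $\bigoplus_\Sigma H^0(X_H(\NQ),\omega^{\expp}_k)_{\wm}$ over $\bigoplus_\Sigma\Tw_{Q,k,\wm}$ then produces a submodule $M$ on which $\Tw$ acts exactly through $\RR^1_Q/\varpi$; the degree-two flatness of $\RR^1_Q/\varpi$ over $R^1_Q/\varpi$ is precisely what makes $M$ doubled.

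A smaller remark: you make the argument harder than needed by insisting that $M$ be the \emph{maximal} doubled submodule and that $\ker(\T\to R^1_Q/\varpi)=J$ exactly. The proposition only requires \emph{some} doubled $M$ with faithful $R^1_Q/\varpi$-action; once such an $M$ is exhibited, $J\subset\Ann_\T(M)$ is automatic (by maximality of the doubled module defining $J$), which yields the stated surjection $\T/J\twoheadrightarrow R^1_Q/\varpi$ with no extra work. The two-sided isomorphism you aim for is deliberately deferred in the paper to Corollary~\ref{corr:nail}, where it falls out of the triangle of surjections; building it directly here is both unnecessary and, as written, depends on the surjection whose existence is the gap.
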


\begin{proof}
Let~$\RR^1_Q$ denote the modification of~$R_Q$ where one also takes into account an eigenvalue~$\alpha_p$
of~$\rho(\Frob_p)$ (recall that representations associated to~$D^1_Q$ are unramified at~$p$). 
The ring~$\RR^1_Q$ is a finite flat degree~$2$ extension of~$R^1_Q$ given as the quotient of~$R^1_Q[\alpha_p]$
by the monic quadratic polynomial~$\alpha_p$ corresponding to  the characteristic polynomial
of Frobenius in the universal representation associated to~$R^1_Q$.
Let us distinguish two cases. The first is when the eigenvalues of~$\rhobar(\Frob_p)$ are distinct, and the
second is when they are the same (in the latter case,~$\rhobar(\Frob_p)$ may or may not be scalar).
Let~$\Sigma$ denote the set of eigenvalues, so~$|\Sigma| = 2$ or~$1$.
If~$|\Sigma| = 1$, then~$\RR^1_Q$ is a local ring, and if~$|\Sigma| = 2$, it is a semi-local ring with two maximal
ideals; indeed, by Hensel's Lemma the quadratic relation satisfied by~$\alpha_p$ splits over~$R^1_Q$, and so there is an isomorphism~$\RR^1_Q = R^1_Q \oplus R^1_Q$ in this case.
(This is essentially Lemma~\ref{lemma:hensel}.)
In particular, the quadratic polynomial has exactly two roots in~$\RR^1_Q$.
There is a surjection
$$\bigoplus_{\Sigma} R_Q \rightarrow \bigoplus_{\Sigma} R_Q/\varpi \rightarrow \RR^1_Q/\varpi.$$
Here the sum is over the rings~$R_Q$ corresponding to each choice of eigenvalue~$a_p \in \Sigma$.
The latter map sends~$\alpha_{\ell}$ to~$\alpha_{\ell}$ for all~$\ell$ dividing~$ \NQ$. If~$|\Sigma| = 2$,
then each~$\alpha_p$ goes to the corresponding eigenvalue of Frobenius.
If~$|\Sigma| = 1$, then~$\alpha_p$ goes to~$\alpha_p$. These maps are well defined because,
after reduction modulo~$\varpi$, all the local conditions (including the determinant)
in the definition of~$D^1$ and~$D^1_Q$ coincide with the exception of~$\ell = p$. For~$\ell = p$,
the enriched ring~$\RR^1_Q$ recieves a map from~$\Rmod_p$, because~$\alpha_p$ in~$\Rmod_p$
is exactly an eigenvalue of Frobenius. 
Since $\Tw_{Q,k,\wm} \simeq R_Q/\varpi$ by Corollary~\ref{corr:metro}, there is a surjection
$$\bigoplus_{\Sigma} \Tw_{Q,k,\wm} \rightarrow \RR^1_Q/\varpi.$$
Since $\bigoplus H^0(X_H(\NQ),\omega^{\expp}_k)_{\wm}$
is co-free over $\bigoplus \Tw_{Q,k,\wm}$, there certainly exists a module $M$ 
such that the action of~$\Tw$ on~$M$ is precisely via this quotient~$\RR^1_{Q}/\varpi$.
Yet this quotient is also finite flat of degree two over~$R^1_Q/\varpi$, which is precisely the image of~$\T$ under this map.
Hence  the submodule~$M$ is doubled, and the corresponding action of~$\T$ is via~$R^1_Q/\varpi$.
The final claim follows from the fact that~$J$ is the ideal corresponding to the largest doubled submodule.
\end{proof}

\begin{remark} \emph{One alternative way to write this paper was to define the functors~$D_Q$, etc. \emph{without}
making a fixed choice of~$a_{\ell}$. This would have amounted to replacing the universal
local deformation rings~$R_Q$, etc. by universal semi-local deformation rings, which would be 
isomorphic to a direct sum
over all the local rings in this paper and over all possible choices of~$a_{\ell}$. We have decided to work with
the version of these rings in which choices have been made, however, as evidenced by the proof
of the previous proposition, one
still has to deal with semi-local rings in some cases at~$\ell = p$, because when~$\rhobar(\Frob_p)$ has distinct eigenvalues,
the corresponding maximal ideal in weight one is determined by the sum~$\alpha + \beta$ of these eigenvalues whereas
the local rings in higher weight require a choice of~$\alpha$ or~$\beta$. }
\end{remark}

\section{Passage from weight~$p$ to weight~one}

Let $\Tone_{Q} = \Tone_{Q,E/\OL}$. The rings ~$\T^1$, $\Tw^1$, $\T$, and~$\Tw$ of the previous section were
abbreviations for
the rings~$\T^1_{Q,k}$, $\Tw^1_{Q,k}$, $\T_{Q,k}$ and~$\Tw_{Q,k}$ respectively; we return to this expanded notation now.

\begin{corr}  \label{corr:nail} There is an isomorphism $R^1_{Q}/\varpi \rightarrow \Tone_{Q,k,\m} \simeq \Tone_{Q,\m}/\varpi$.
\end{corr}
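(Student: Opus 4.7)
The plan is to close the loop between $R^1_Q/\varpi$, $\T^1_{Q,k,\m}$, and $\T^1_{Q,\m}/\varpi$ using the surjections already in hand, plus a reverse map supplied by weight-one modularity.

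Composing the surjection $\T^1 \twoheadrightarrow \T/J$ of Corollary~\ref{cor:pickle} with the identification $\T/J \simeq R^1_Q/\varpi$ built into the faithful action of Proposition~\ref{prop:porpoise} gives a surjection $\T^1_{Q,k,\m} \twoheadrightarrow R^1_Q/\varpi$. There is also an evident surjection $\T^1_{Q,\m}/\varpi \twoheadrightarrow \T^1_{Q,k,\m}$ via Pontryagin duality: $H^0(X,\omega_{E/\OL})^{\vee}_{\m}/\varpi \simeq H^0(X,\omega_k)^{\vee}_{\m}$, so the reduction of the integral Hecke action factors through $\T^1_{Q,k,\m}$.

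To produce a reverse arrow, I would construct a Galois representation $\rho^1_Q: G_\Q \to \GL_2(\T^1_{Q,\m})$ of type $D^1_Q$. Each characteristic-zero weight-one eigenform contributing to $\T^1_{Q,\m}[1/p]$ is holomorphic of level prime to $p$; by Deligne--Serre it carries an odd continuous Galois representation unramified at $p$. Assembling these over contributing newforms of level dividing $NQ$, with an Atkin--Lehner--oldforms analysis in the spirit of Proposition~\ref{prop:action} (verifying local-global compatibility and the factorization through each $R^{\mathrm{mod}}_\ell$ of Lemma~\ref{lemma:gee} for $\ell \in S$), and using absolute irreducibility of $\rhobar$ together with~$(2.6)$ of~\cite{Lenstra} to integrate the resulting pseudo-representation, yields the desired representation valued in $\T^1_{Q,\m}$ of type $D^1_Q$. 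The universal property of $R^1_Q$ then supplies a map $R^1_Q \to \T^1_{Q,\m}$, which is surjective because $T_\ell$ and $U_\ell$ all appear as traces of Frobenius on $\rho^1_Q$; reducing modulo $\varpi$ gives $R^1_Q/\varpi \twoheadrightarrow \T^1_{Q,\m}/\varpi$.

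Composing the three surjections produces a surjective self-map
$$R^1_Q/\varpi \twoheadrightarrow \T^1_{Q,\m}/\varpi \twoheadrightarrow \T^1_{Q,k,\m} \twoheadrightarrow R^1_Q/\varpi$$
that acts as the identity on each generator $\Tr(\rho^{\mathrm{univ}}(\Frob_\ell))$ and $\alpha_\ell$ by Cebotarev density. Since a surjective endomorphism of the Noetherian local ring $R^1_Q/\varpi$ is automatically an isomorphism, each intermediate surjection is also an isomorphism, giving the claimed identifications. The principal obstacle is the modularity step: matching the local behaviour of $\rho^1_Q$ at primes in $S$ with the quotient $R^{\mathrm{mod}}_\ell$ requires a weight-one Atkin--Lehner computation somewhat more delicate than that of Proposition~\ref{prop:action}, since weight-one oldform spaces in the non-ordinary regime behave less rigidly. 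A minor degenerate case is when $\T^1_{Q,\m}$ is $\varpi$-torsion, so Deligne--Serre is vacuous; in that case $\T^1_{Q,\m}/\varpi \simeq \T^1_{Q,k,\m}$ is tautological, and the identification with $R^1_Q/\varpi$ must instead be extracted directly from the existing surjection by a length count against the doubled module $M$ using Lemma~\ref{lemma:count}.
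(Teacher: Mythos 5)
Your cycle of surjections is the right skeleton, and two of its three arrows --- the composite $\Tone_{Q,k,\m} \twoheadrightarrow \T_{Q,k,\m}/J \twoheadrightarrow R^1_Q/\varpi$ coming from Corollary~\ref{cor:pickle} and Proposition~\ref{prop:porpoise}, and the identification of $\Tone_{Q,\m}/\varpi$ with $\Tone_{Q,k,\m}$ via the freeness statement of Lemma~\ref{lemma:count} --- are exactly the ones the paper uses. The gap is in your construction of the arrow out of $R^1_Q$. You propose to build a representation valued in $\Tone_{Q,\m}$ by applying Deligne--Serre to characteristic-zero weight-one eigenforms and gluing. But $\Tone_{Q,\m}$ acts faithfully on $H^0(X_H(\NQ),\omega_{E/\OL})^{\vee}_{\m}$, a finite $\OL$-module which in general has a large $\varpi$-torsion part: the existence of Katz weight-one forms that do not lift to characteristic zero is Theorem~\ref{theorem:blog} of this very paper, so the ``degenerate case'' you set aside is in fact the central case. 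Any representation assembled from classical eigenforms lands only in the image of $\Tone_{Q,\m}$ in $\Tone_{Q,\m}[1/p]$, i.e.\ in the maximal $\OL$-torsion-free quotient; reducing that modulo $\varpi$ does not surject onto $\Tone_{Q,\m}/\varpi$. Your fallback of ``a length count against the doubled module'' does not repair this, because without a Galois representation valued in $\Tone_{Q,k,\m}$ of type $D^1_Q$ there is nothing to feed into the universal property of $R^1_Q$, and hence no ring homomorphism out of $R^1_Q/\varpi$ at all.

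The missing input is Theorem~3.11 of~\cite{CG}: there is a Galois representation valued in the full torsion weight-one Hecke algebra $\Tone_{Q,k,\m}$ (produced by the doubling method, not by Deligne--Serre), and it is unramified at~$p$; combined with local--global compatibility at $\ell \mid \NQ$ (obtained by multiplying by the Hasse invariant and lifting to characteristic zero in weight~$p$, as in the proof of Proposition~\ref{prop:action}), this is what yields the map $R^1_Q/\varpi \rightarrow \Tone_{Q,k,\m}$. Once that map exists, your remaining argument --- the composite self-map of $R^1_Q/\varpi$ fixes the traces of Frobenius and the $\alpha_{\ell}$, hence is the identity on a generating set, hence every arrow in the cycle is an isomorphism --- is correct and is how the paper concludes.
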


\begin{proof} For the first isomorphism, it suffices to note that there exists a map
$$R^1_{Q}/\varpi \rightarrow \Tone_{Q,k,\m} \rightarrow \T_{Q,k,\m}/J \rightarrow R^1_{Q}/\varpi$$
whose composite is the identity.
The existence of the first map follows from the fact that Galois representations in weight~one are unramified
at~$p$, which follows from Theorem~3.11 of~\cite{CG} (together with the appropriate local--global
compatibility away from~$p$, which follows as in the proof of Theorem~3.11 of~\cite{CG} by reduction to characteristic zero in higher weight, together with the proof of Proposition~\ref{prop:action}). The second map comes from Corollary~\ref{cor:pickle}. The
existence of the
third map follows from Proposition~\ref{prop:porpoise}. The identification of $\Tone_{Q,k,\m}$
with $\Tone_{Q,\m}/\varpi$ follows from the fact that
$$H^0(X_H(\NQ),\omega_k)_{\m} = H^0(X_H(\NQ),\omega_{E/\OL})_{\m}[\varpi]$$
and the fact that $H^0(X_H(\NQ),\omega_{E/\OL})^{\vee}$ is free over $\Tone_{Q,\m}$ by Lemma~\ref{lemma:count}.
\end{proof}

\subsection{Modularity in weight one} \label{section:one}
Now that we have an isomorphism $R^1_{Q}/\varpi \simeq \T^1_{Q,\m}/\varpi$ for all collections of Taylor--Wiles primes $Q = Q_D$,
and we apply the machinery of~\cite{CG}, in particular Prop.~2.3 as applied in~\S3.8 of~\emph{ibid}.
We patch the modules 
$H^0(X_{H_D}(\NQ_D),\omega_{E/\OL})^{\vee}_{\m}$,
where~$Q$ is a collection of Taylor--Wiles primes~$x \equiv 1 \mod p^D$, and~$H$
is the subgroup of~$(\Z/\NQ_D \Z)^{\times}$ which is generated by the kernel 
of~$(\Z/ Q_D \Z)^{\times} \rightarrow (\Z/p^D \Z)^{\times}$ and the~$p$-Sylow of~$(\Z/N\Z)^{\times}$.
We obtain 
a module $M_{\infty}$ which is a module over
the framed ring
of diamond operators $S^{\square}_{\infty}$, 
and a patched deformation ring $\Rboxone_{\infty}$ which is also an algebra
over this ring.  In contrast to~\cite{CG}, the ring~$\Rboxone_{\infty}$ is a power series ring over a completed
tensor product
 $$\Rlocmod:=\widehat{\bigotimes}_{\ell | N} \Rmod_{\ell},$$
 instead of a power series ring over~$\OL$.
Because
the modules $H^0(X_{Q},\omega_{E/\OL})^{\vee}_{\m}$ are free over $\T_{Q,\m}$, the module $M_{\infty}$ is
cyclic as an $\Rboxone_{\infty}$-module. 
Hence we know that:
\begin{enumerate}
\item $M_{\infty}/\varpi$ is  free of rank one over $\Rboxone_{\infty}/\varpi$, as follows
from our mod-$p$ modularity results above, in particular Corollary~\ref{corr:nail}.
\item $M_{\infty}$  is pure of co-dimension one as an $S^{\square}_{\infty}$-module; that is, 
$M_{\infty}$ is a torsion $S^{\square}_{\infty}$-module, and there
exists a presentation:
$$0 \rightarrow (S^{\square}_{\infty})^n \rightarrow (S^{\square}_{\infty})^n \rightarrow M_{\infty} \rightarrow 0;$$
this is exactly the output of the construction of~\cite{CG}.
\end{enumerate}

The second result is essentially a formal consequence of the method of~\cite{CG} rather than anything in this paper.
This on its own is enough to show that~$M_{\infty}$ will certainly be supported on~\emph{some} components 
of the generic fibre of~$\Rboxone_{\infty}$. However, as soon as~$S$ contains primes for which~$\rhobar$ is unramified
(that is, as soon as we are at non-minimal level), 
the ring~$\Rboxone_{\infty}[1/\varpi]$ will have multiple components. The usual technique for showing that the support of~$M_{\infty}$
is spread over all components is to produce modular lifts with these properties. In our context this is not
possible: there are no  weight one forms in characteristic zero which are Steinberg at a finite place~$q$
(see the proof of Prop.~\ref{prop:katzfail} below).
Our replacement for producing modular points in characteristic zero is to work 
on the special fibre, and to show that~$M_{\infty}/\varpi$
is (in some sense) spread out as much as possible over~$\Rboxone_{\infty}/\varpi$. And we do this (and this is the main point
of everything so far) by working in weight~$p$ and then descending back to weight one using the doubling method.
In particular, we know that~$M_{\infty}/\varpi$  is  free of rank one over $\Rboxone_{\infty}/\varpi$.
From these two properties, we will now deduce that $M_{\infty}$ is free of rank one over $\Rboxone_{\infty}$,
which will imply all our modularity results.

By Nakayama's Lemma, there is certainly an exact sequence of $S^{\square}_{\infty}$-modules.
$$0 \rightarrow K \rightarrow   \Rboxone_{\infty} \rightarrow M_{\infty} \rightarrow 0$$
It suffices to show that~$K = 0$.  By Nakayama's Lemma again, it suffices to show that $K/\varpi = 0$.
Tensoring with $S^{\square}_{\infty}/\varpi$ (that is, reducing modulo~$\varpi$), we get a long exact sequence
$$\Tor^1(S^{\square}_{\infty}/\varpi,M_{\infty}) \rightarrow K/\varpi
\rightarrow \Rboxone_{\infty}/\varpi \stackrel{\simeq}{\longrightarrow} M_{\infty}/\varpi \rightarrow 0.$$
Here the last map is an isomorphism by property~$(1)$ above.  Hence, to prove that~$K/\varpi$ is trivial, it suffices
to show that
$$\Tor^1(S^{\square}_{\infty}/\varpi,M_{\infty}) = M_{\infty}[\varpi]$$
is trivial. If $M_{\infty}[\varpi]$ is non-trivial, then, from the purity of $M_{\infty}$,  we claim that  $M_{\infty}/\varpi$ will have positive rank over
$S^{\square}_{\infty}/\varpi$. To see this,  simply tensor the presentation of $M_{\infty}$ with
$S^{\square}_{\infty}/\varpi$ to obtain the exact sequence:
$$0 \rightarrow \Tor^1(S^{\square}_{\infty}/\varpi,M_{\infty}) \rightarrow
(S^{\square}_{\infty}/\varpi)^n \rightarrow (S^{\square}_{\infty}/\varpi)^n \rightarrow M_{\infty}/\varpi \rightarrow 0,$$
from which it follows that
$$\rank_{S^{\square}_{\infty}/\varpi} M_{\infty}/\varpi =\rank_{S^{\square}_{\infty}/\varpi} 
\Tor^1(S^{\square}_{\infty}/\varpi,M_{\infty}).$$
If~$\Tor^1(S^{\square}_{\infty}/\varpi,M_{\infty})$ is non-zero, then, as it is a submodule of a free module over
$S^{\square}_{\infty}/\varpi$ which has no annihilator, it also has no annihilator
 as an~$S^{\square}_{\infty}/\varpi$ module. However,
a  module with no annihilator over a power series ring over~$k$ certainly must have positive rank.
This implies that (if~$M_{\infty}[\varpi]$ is non-zero) that~$M_{\infty}/\varpi$ has positive rank.
Yet this
 contradicts  the fact that $\Rboxone_{\infty}/\varpi \simeq M_{\infty}/\varpi$ does not  have positive rank, as $\Rboxone_{\infty}$
is flat over $\OL$ (Lemma~\ref{lemma:kisin}) and of smaller dimension than the ring of diamond operators (by one).
Hence $M_{\infty}$ is free of rank one over $\Rboxone_{\infty}$. But now specializing down to finite level,
we deduce that $H^0(X,\omega_{E/\OL})^{\vee}_{\m}$ is free of rank one over $R^1$, which
proves Theorem~\ref{theorem:main}.

\subsection{Producing Torsion Classes} \label{section:blog}
Let 
$$f = \sum a_n q^n \in S_1(\Gamma_H(N),\eta)$$
be a cuspidal eigenform of some  level~$N$ and character~$\eta$. Let
$$\rho: G_{\Q} \rightarrow \GL_2(E)$$
denote the corresponding Artin representation.  Assume
that~$\rho|D_{\ell}$ is reducible for any prime~$\ell$ dividing~$N$.
\begin{prop} \label{prop:katzfail} Let~$f$ be as above. Let~$p>2$ be a prime such that~$\rhobar_f$ is absolutely
irreducible and~$p$ is prime to the level~$N$ of~$f$ and the order of~$\eta$.
Then there exists  a set of primes~$\ell$ of positive
density so that, for each such~$\ell$, the map:
$$H^0(X( \Gamma_H(N) \cap \Gamma_0(\ell))_{\Z_p},\omega)_{\m} \otimes \F_p \rightarrow H^0(X( \Gamma_H(N) \cap \Gamma_0(\ell))_{\F_p},\omega)_{\m}$$
is not surjective. \end{prop}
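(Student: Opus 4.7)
The plan is to reinterpret non-surjectivity of the reduction map as the existence of $\OL$-torsion in the weight-one Hecke algebra, identify this Hecke algebra with a modified deformation ring $R^1$ via Theorem~\ref{theorem:main}, and produce torsion by showing that the generic fibre of $R^1$ misses the Steinberg component at~$\ell$ while its special fibre does not.

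By Lemma~\ref{lemma:count}, $H^0(X,\omega_{E/\OL})^{\vee}_{\m}$ is free of rank one over $\Tone_{\m}$, so surjectivity of the reduction map is equivalent to $\OL$-flatness of $\Tone_{\m}$. After enlarging~$S$ to contain~$\ell$ and fixing a compatible~$a_\ell$, Theorem~\ref{theorem:main} identifies $\Tone_{\m}$ with the modified deformation ring $R^1$ at level~$N\ell$, so it suffices to exhibit $\OL$-torsion in~$R^1$. I would let~$\ell$ range over primes with $\ell \nmid Np$, $\ell \equiv 1 \pmod p$, and $\rhobar_f(\Frob_\ell)$ a scalar matrix; these conditions define a nonempty Galois-stable subset of $\Gal(K/\Q)$, where~$K$ is the compositum of~$\Q(\zeta_p)$ with the splitting field of~$\rhobar_f$ (the identity element always works), so by Chebotarev such~$\ell$ occur in positive density. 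For any such~$\ell$, after an unramified twist $\rhobar|D_\ell$ is trivial, and by Lemmas~\ref{lemma:gee} and~\ref{lemma:special} the local modified deformation ring $\Rmod_\ell$ is reduced and $\OL$-flat, with generic fibre decomposing into an unramified component $\RR^{\unr,\psi}_\ell$ and a Steinberg component $\RR^{\sp,\psi}_\ell$.

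Any $\OL'$-valued point of~$R^1$, for~$E'/E$ finite, corresponds via $R^1 \simeq \Tone_{\m}$ and Deligne--Serre to a classical weight one eigenform, hence to a continuous Artin Galois representation with finite image. In particular the local representation at~$\ell$ cannot have infinite unipotent inertia image, so such a point must lie on the unramified component of~$\Rmod_\ell$. Thus $\Spec R^1[1/\varpi]$ is supported on the quotient $R^{1,\unr}$ obtained by pushing out along $\Rmod_\ell \twoheadrightarrow \RR^{\unr,\psi}_\ell$.

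The main obstacle is then to show that $R^1/\varpi$ nonetheless sees the Steinberg component, so that~$R^1$ is strictly larger than its image in~$R^{1,\unr}$. For this I would appeal to the patching construction of Section~\ref{section:one}: the patched module $M_\infty$ is free of rank one over the patched deformation ring $\Rboxone_\infty$, and the latter is a power series ring over $\widehat{\bigotimes}_{q \mid N\ell} \Rmod_q$, so the Steinberg factor of~$\Rmod_\ell$ persists as a distinct component of~$\Rboxone_\infty$ and supports a nonzero quotient of~$M_\infty/\varpi$. The diamond operators act through Taylor--Wiles primes disjoint from~$\ell$, so they do not interact with the local factor at~$\ell$, and the augmentation quotient that recovers finite-level data preserves this Steinberg support. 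Hence $R^1/\varpi$ admits a nontrivial quotient corresponding to the Steinberg component; this quotient is killed on inverting~$\varpi$ by the previous paragraph, producing the required $\varpi$-torsion in~$R^1$ and thereby the asserted non-surjectivity.
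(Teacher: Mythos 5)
Your reduction is fine: via Lemma~\ref{lemma:count} and Theorem~\ref{theorem:main}, non-surjectivity of the reduction map is equivalent to the existence of $\varpi$-torsion in $R^1$ (at level $N\ell$), and you correctly observe that the generic fibre of $R^1$ misses the Steinberg component at~$\ell$ because no characteristic-zero weight one form is Steinberg. The gap is in the last paragraph. The assertion that the Steinberg component of $\Rmod_\ell$ ``persists'' through the augmentation quotient is exactly the point that needs proof, and the patching formalism cannot supply it: passing from $M_\infty$ to $M_\infty/\aa$ does \emph{not} preserve support on components (this is the ``essential difficulty'' the paper itself flags in \S\ref{section:higher}), and indeed your argument, if valid, would apply equally to the generic fibre and show that $R^1[1/\varpi]$ is supported on the Steinberg component --- contradicting your own correct observation in the previous paragraph. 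Freeness of $M_\infty$ over $\Rboxone_\infty$ only identifies $\T_\m$ with $R^1$; whether $R^1/\varpi$ actually admits a quotient ramified at $\ell$ is a question about the global deformation functor that no amount of patching can answer.

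What is needed, and what the paper supplies, is a Galois cohomology computation: one chooses $\ell$ to split completely in $F(\zeta_p)$, where $F$ contains the splitting field of $\rho$ \emph{and} trivializes every class in the dual Selmer group $H^1_{\emptyset^*}(\Q,\ad^0(\rhobar)(1))$; the Greenberg--Wiles formula then shows that allowing ramification at $\ell$ enlarges the Selmer group by a factor of $|\ad^0(\rhobar)(1)|$, producing a deformation to $k[\eps]/\eps^2$ genuinely ramified at $\ell$, which one then checks lies on $\Rmod_\ell$ using the explicit presentation of Lemma~\ref{lemma:special} (all relations but the trace condition lie in $\m^2$). Your weaker condition on $\ell$ (only that $\ell\equiv 1 \bmod p$ and $\rhobar(\Frob_\ell)$ be scalar) does not guarantee such a deformation exists: without controlling the drop in the dual Selmer group at $\ell$, the Greenberg--Wiles ratio can be absorbed entirely on the dual side and $H^1_\Sigma$ can equal $H^1_\emptyset$.
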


\begin{remark} \emph{This implies that Katz' base-change theorem
(Theorem~1.7.1 of~\cite{KatzBig}) fails as badly as possible in weight one.}
\end{remark}

\begin{proof} Suppose, to the contrary, that all
such forms of this level lift to characteristic zero.
There are no 
forms in characteristic zero which are new at~$\ell$ of level $\Gamma_0(\ell)$, because any such form would have to be (up to unramified twist)
 Steinberg  at~$\ell$,
and no weight one form in characteristic zero can be Steinberg at any place.
The easiest way to see this is that the eigenvalue of~$U_{\ell}$ would have to be a root of unity times
${\ell}^{-1/2}$, but this is impossible because Hecke eigenvalues
of  modular forms are algebraic integers.
Hence any Galois representation arising
from forms of this level must come from level $\Gamma_H(N)$, and so in particular be unramified at~${\ell}$.
Thus, by Theorem~\ref{theorem:main}, it suffices to show
that there is a non-trivial deformation of~$\rhobar$ to
the dual numbers which is minimal at~$N$,  corresponds to
a quotient of~$\Rmod_{{\ell}}$ at the new auxiliary prime~${\ell}$, and is
 unramified everywhere else. 
The reduced tangent space of the unramified
deformation ring is given by the
Selmer group $H^1_{\emptyset}(\Q,\ad^0(\rhobar))$.
Denote the dual Selmer group by $H^1_{\emptyset^*}(\Q,\ad^0(\rhobar)(1))$.
Since these groups are both finite, there exists a finite extension
$F/\Q$ which contains the fixed field of $\ker(\rho)$ and such that
all the classes in  $H^1_{\emptyset^*}(\Q,\ad^0(\rhobar)(1))$ split
completely. Let~${\ell}$ be a prime which splits completely in~$F(\zeta_p)$.
Let $H^1_{\Sigma}(\Q,\ad^0(\rhobar))$ denote the modified Selmer group
where classes are allowed to be arbitrarily ramified at~${\ell}$. By construction of~${\ell}$, the dual Selmer group $H^1_{\Sigma^*}(\Q,\ad^0(\rhobar))$
consisting of all dual Selmer classes which split completely at~${\ell}$ is equal
to $H^1_{\emptyset^*}(\Q,\ad^0(\rhobar))$, because the localization map
factors through the restriction to $G_F$, and by construction all classes in the latter group are assumed to
split completely over~$\Q_{\ell}$. 
Hence,  the Greenberg--Wiles Euler characteristic formula:
$$\begin{aligned}
 \frac{|H^1_{\Sigma}(\Q,\ad^0(\rhobar))|}{|H^1_{\emptyset}(\Q,\ad^0(\rhobar))|} & \ = 
\frac{|H^1_{\Sigma}(\Q,\ad^0(\rhobar))|}{|H^1_{\Sigma^*}(\Q,\ad^0(\rhobar)(1))|} \cdot
\frac{|H^1_{\emptyset^*}(\Q,\ad^0(\rhobar)(1))|} {|H^1_{\emptyset}(\Q,\ad^0(\rhobar))|}\\
& \ =\frac{|H^1(\Q_{\ell},\ad^0(\rhobar))|}{|H^1(\F_{\ell},
\ad^0(\rhobar))|}  = |H^0(\Q_{\ell},\ad^0(\rhobar)(1))|  = |\ad^0(\rhobar)(1)|, \end{aligned}$$
the final equality coming from the assumption that~${\ell}$ splits completely in~$F(\zeta_p)$.
Note that $\dim \ad^0(\rhobar)(1) = 3 > 0$.
It follows
that for such choices of~$\ell$,
 there exists a deformation
$$\rho: G_{\Q} \rightarrow \GL_2(k[\eps]/\eps^2)$$
which is minimal at all primes away from~$\ell$ and genuinely
ramified at~$\ell$. 
Moreover, $\rhobar |D_{\ell}$ is trivial and $\ell \equiv 1 \mod p$. 
 It suffices to show that the corresponding deformation
arises from a quotient of~$\Rmod_{\ell}$,
which was described explicitly in this case
by Lemma~\ref{lemma:special}.
Since we are considering
fixed determinant deformations, the 
trace of the image~$\rho(g)$ of any element~$g$ is $1 + \det(g) = 2$.
However, it is apparent the description of~$\Rmod_{\ell}$ in
Lemma~\ref{lemma:special} that all the relations apart from $\tr(\rho(\tau)) = 2$ lie in~$\m^2$, and so
are automatically satisfied for any deformation to $k[\eps]/\eps^2$. (Note that, associated to~$\rho$,
there is a  corresponding
surjection~$\Rmod_{\ell} \rightarrow k[\eps]/\eps^2$ for any choice of~$\alpha_{\ell} \in 1 + \eps \cdot k[\eps]/\eps^2$.)
\end{proof}

\subsection{Proof of Theorem~\ref{theorem:blog}}
To prove Theorem~\ref{theorem:blog}, it suffices to apply Proposition~\ref{prop:katzfail}  to suitably
chosen~$f$. Note that the class numbers of the fields~$\Q(\sqrt{-23})$ and~$\Q(\sqrt{-47})$ are~$3$ and~$5$ respectively. This gives rise to suitable weight one forms~$f$ with image~$D_3$ and~$D_5$ and level~$\Gamma_1(23)$ and~$\Gamma_1(47)$  respectively (both with quadratic nebentypus). Applying Proposition~\ref{prop:katzfail}, we deduce the
existence of mod-$p$ Katz modular forms which fail to lift for all $p \ne 2,3,23$ in the first example and 
$p\ne 2,5,47$ in the second.  For~$p=2$, the theorem is known by an example of Mestre~\cite{Mestre}, 
completing the proof.

\begin{remark} \emph{The computations of~\cite{schaeffer} (see also~\cite{buzzcomp}) suggest that Theorem~\ref{theorem:blog}
should also be true if one insists that~$f$ is an \emph{eigenform}. By Serre's conjecture, this would follow
if for each~$p > 5$  there existed  an odd Galois representation:
$\rhobar: G_{\Q} \rightarrow \GL_2(k)$  unramified at~$p$  with image containing~$\SL_2(\F_p)$,
although proving this appears difficult.}
\end{remark}

\begin{remark} \emph{If the cuspform~$f$ in Proposition~\ref{prop:katzfail} is exceptional
 ---  that is, the projective image of~$\rho_f$ is~$A_4$,~$S_4$, or~$A_5$ --- then
the resulting torsion class at level $\Gamma_H(N) \cap \Gamma_0(q)$ will
not lift to characteristic zero at any higher level. The reason is that any form~$g$ with
$\rhobar_f = \rhobar_g$ will have to satisfy $\rho_f = \rho_g$ up to a~$p$-power twist, and the resulting
Hecke algebra in weight one cannot give rise to the infinitesimal deformations~$\rho$ which
arise in the proof of Proposition~\ref{prop:katzfail}.
}
\end{remark}

\bibliographystyle{amsalpha}
\bibliography{doubled}

 \end{document}